\documentclass[]{scrartcl}

\usepackage[utf8]{luainputenc}
\usepackage[USenglish]{babel}
\usepackage{csquotes}

\usepackage[a4paper,top=27mm,bottom=20mm,inner=25mm,outer=20mm]{geometry}

\usepackage[%
  backend=bibtex,bibencoding=ascii,
  style=numeric-comp,
  giveninits=true, uniquename=init, 
  natbib=true,
  url=true,
  doi=true,
  isbn=false,
  backref=false,
  maxnames=99,
  ]{biblatex}
\usepackage{amsmath}
\allowdisplaybreaks
\numberwithin{equation}{section}
\usepackage{amssymb}
\usepackage{commath}
\usepackage{mathtools}
\usepackage{bbm}
\usepackage{nicefrac}
\usepackage{subdepth}

\usepackage{siunitx}
\usepackage{algorithm}
\usepackage{algpseudocode}

\usepackage{amsthm}

\usepackage[plainpages=false,pdfpagelabels,hidelinks,unicode]{hyperref}

\usepackage{cleveref}

\usepackage{thmtools}
\usepackage{etoolbox}
\makeatletter
\patchcmd{\thmt@setheadstyle}
 {\bgroup\thmt@space}
 {\thmt@space}
 {}{}
\patchcmd{\thmt@setheadstyle}
 {\egroup\fi}
 {\fi}
 {}{}
\makeatother
\declaretheoremstyle[
  bodyfont=\normalfont\itshape,
  headformat=\NAME\ \NUMBER\NOTE,
]{myplain}
\declaretheoremstyle[
  headformat=\NAME\ \NUMBER\NOTE,
]{mydefinition}
\newcommand{\envqed}{{\lower-0.3ex\hbox{$\triangleleft$}}}
\declaretheorem[style=myplain,numberwithin=section]{theorem}
\declaretheorem[style=myplain,numberlike=theorem]{lemma}
\declaretheorem[style=myplain,numberlike=theorem]{proposition}

\declaretheorem[style=myplain,numberlike=theorem]{corollary}
\declaretheorem[style=mydefinition,numberlike=theorem,qed=\envqed]{definition}
\declaretheorem[style=mydefinition,numberlike=theorem,qed=\envqed]{remark}

\declaretheorem[style=mydefinition,qed=\envqed]{assumption}

\newenvironment{assumptionstar}
  {\addtocounter{assumption}{-1}%
   \assumption}
  {\endassumption}


\usepackage{color}
\usepackage{graphicx}
\usepackage[small]{caption}
\usepackage{subcaption}
\usepackage{tikz} 
\usepackage[section]{placeins} 

\begingroup\expandafter\expandafter\expandafter\endgroup
\expandafter\ifx\csname pdfsuppresswarningpagegroup\endcsname\relax
\else
\fi

\usepackage{booktabs}
\usepackage{rotating}
\usepackage{multirow}

\usepackage{enumitem}

\usepackage{ifluatex}
\ifluatex
  \usepackage[no-math]{fontspec}
\else
  \usepackage[T1]{fontenc}
\fi
\usepackage{newpxtext,newpxmath}

\let\epsilon\varepsilon
\let\phi\varphi
\let\rho\varrho

\renewcommand{\O}{\mathcal{O}}

\newcommand{\N}{\mathbb{N}}
\providecommand\e{}
\renewcommand{\e}{\mathrm{e}}
\providecommand\Z{}
\renewcommand{\Z}{\mathbb{Z}}
\providecommand\R{}
\renewcommand{\R}{\mathbb{R}}

\newcommand{\I}{\operatorname{I}}

\renewcommand{\vec}[1]{\pmb{#1}}

\newcommand{\solbase}{u}                 
\newcommand{\usol}{\solbase}             
\newcommand{\unum}{\solbase^{h}}         
\newcommand{\vsol}{\vec{\solbase}}       
\newcommand{\vnum}{\vec{\solbase}^{h}}   
\newcommand{\wbase}{w}                    
\newcommand{\wsol}{\wbase}                
\newcommand{\wnum}{\wbase^{h}}            
\newcommand{\diag}{\operatorname{diag}}
\newcommand{\amdisc}[2]{\vec{J a}^{#1,h}_{#2}}     
\newcommand{\amdisci}[3]{J a^{#1,h}_{#2,#3}}       
\newcommand{\Jdisc}{\vec{J}^{h}}                 
\newcommand{\Jdiscn}{J^{h}}                        
\newcommand{\fnum}{f^{\mathrm{num}}}
\newcommand{\mean}[1]{\{\!\{#1\}\!\}}
\newcommand{\logmean}[1]{\{\!\{#1\}\!\}_{\log}}
\newcommand{\prodmean}[2]{(\!(#1 \cdot #2)\!)}
\newcommand{\distper}{d_{\mathrm{per}}}

\newcommand{\orcid}[1]{ORCID:~\href{https://orcid.org/#1}{#1}}
\usepackage{authblk}

\newenvironment{keywords}{\par\textbf{Key words.}}{\par}
\newenvironment{AMS}{\par\textbf{AMS subject classification.}}{\par}

\title{Convergence of entropy-conservative summation-by-parts discretizations to smooth solutions of hyperbolic conservation laws}

\author[1]{Hendrik~Ranocha\thanks{\orcid{0000-0002-3456-2277}}}
\affil[1]{Institute of Mathematics, Johannes Gutenberg University Mainz, Staudingerweg 9, 55128 Mainz, Germany}

\date{July 29, 2026}

\makeatletter
\hypersetup{pdfauthor={Hendrik Ranocha}}
\hypersetup{pdftitle={Convergence of entropy-conservative summation-by-parts discretizations to smooth solutions of hyperbolic conservation laws}}
\makeatother

\begin{document}

\maketitle

\begin{abstract}
\noindent
  Although entropy-based summation-by-parts (SBP) discretizations of hyperbolic conservation laws are widely used for their robustness and stability properties, there are very few results on their convergence.
We extend a recent convergence analysis of Worku, Del Rey Fern\'andez, and Zingg (2026, \url{https://doi.org/10.48550/arXiv.2603.18369}) in two ways.
First, instead of allowing only hyperbolic conservation laws whose fluxes are homogeneous and have globally bounded second derivatives (a restriction essentially to linear or quadratic fluxes), we consider general hyperbolic systems with strictly convex entropy and source terms depending on time and space.
Second, instead of requiring a special class of SBP operators, we consider a general framework of diagonal-norm SBP operators on curved meshes, including finite differences, continuous and discontinuous Galerkin methods.
Since the error analysis is based on a discrete relative entropy, it is restricted to smooth solutions.
To enable a unified treatment of conservation laws, we restrict the analysis to periodic boundary conditions.
Numerical results demonstrate that the predicted convergence rates are sharp in general, but can be improved for special cases such as discontinuous Galerkin methods with even polynomial degree and multi-block finite difference methods.
An optimal analysis is expected to require more sophisticated arguments specialized to the class of methods instead of the general framework of SBP operators used in this work.

\end{abstract}

\begin{keywords}
  summation-by-parts operators,
  entropy-conservative methods,
  relative entropy,
  hyperbolic conservation laws,
  convergence analysis
\end{keywords}

\begin{AMS}
  65M12, 
  65M06, 
  65M60, 
  65M70, 
  65M20, 
  35L65 
\end{AMS}


\section{Introduction}
\label{sec:introduction}

Entropy-conservative and entropy-stable discretizations of hyperbolic conservation laws have become a central tool in the design of robust high-order methods \cite{chen2020review,gassner2021novel}.
Building on Tadmor's entropy-conservative two-point fluxes \cite{tadmor1987numerical,tadmor2003entropy} and their high-order extensions \cite{lefloch2002fully,fisher2013high}, flux differencing schemes based on summation-by-parts (SBP) operators transfer the entropy structure of the continuous problem to finite difference, discontinuous Galerkin, and related discretizations \cite{gassner2013skew,carpenter2014entropy,chen2017entropy,ranocha2018comparison}, including curvilinear meshes \cite{crean2018entropy}.
The resulting methods satisfy a semidiscrete entropy equality or inequality by construction and show strongly improved robustness for nonlinear problems \cite{gassner2016split,winters2018comparative,rojas2021robustness}.
However, there have recently been some discussions about local linear/energy stability issues of these methods \cite{gassner2022stability,ranocha2021preventing}.
While \cite{gassner2022stability} discussed why these issues do not contradict convergence, such results pointed out a lack of convergence theory for these methods.
Recently, Worku, Del Rey Fern{\'a}ndez, and Zingg \cite{worku2026convergence} started to fill this gap by proving convergence of entropy-conservative split-form discretizations of symmetric hyperbolic systems with homogeneous flux functions with globally bounded second derivatives in the multi-dimensional continuous-SBP framework.
This article extends their work to more general systems including the compressible Euler equations, allows source terms depending on space and time, and considers a more general class of SBP operators.
The technique based on the relative entropy of Dafermos \cite{Dafermos1979} and DiPerna \cite{Diperna1979} (see also \cite[Chapter~5]{dafermos2016hyperbolic}) is in the spirit of the celebrated Lax equivalence theorem: (entropy) stability and consistency imply convergence.

Relative-entropy methods are established tools in the numerical analysis of conservation laws.
For example, Arvanitis, Makridakis, and Tzavaras \cite{arvanitis2004stability} used the relative entropy for finite element schemes based on relaxation models, obtaining an $\O(\epsilon)$ rate for the relaxation approximation of smooth solutions.
Jovanovi{\'c} and Rohde \cite{jovanovic2006error} proved the a~priori error estimate $\O(\sqrt{h})$ for first-order finite volume schemes approximating classical solutions of multi-dimensional systems of hyperbolic balance laws.
Canc{\`e}s, Mathis, and Seguin \cite{cances2016error} obtained the rate $\O(h^{1/4})$ for time-explicit finite volume schemes with entropy-dissipative fluxes under a strengthened CFL condition.
For the Godunov method applied to the multidimensional compressible Euler equations, Luk{\'a}\v{c}ov{\'a}-Medvidov{\'a}, She, and Yuan \cite{lukacova2022godunov} obtained an $L^2$ rate of $\O(h^{1/4})$, improved to $\O(h^{1/2})$ under a uniform total-variation bound.
More recently, Anandan, Arun, Krishnamurthy, and Luk{\'a}\v{c}ov{\'a}-Medvidov{\'a} \cite{anandan2026error} used relative energy to analyze an asymptotic-preserving, energy-stable finite volume method for the barotropic Euler equations.
On the a~posteriori side, Giesselmann, Makridakis, and Pryer \cite{giesselmann2015aposteriori} combined reconstructions of discontinuous Galerkin solutions with the relative-entropy framework, with extensions to fully discrete schemes \cite{dedner2016aposteriori}; see \cite{giesselmann2016relative} for an overview.
A genuinely discrete relative-energy method was developed by Gallou{\"e}t, Herbin, Maltese, and Novotn{\'y} \cite{gallouet2016error} for a mixed finite-element/finite-volume discretization of the compressible barotropic Navier-Stokes equations, with fractional convergence rates.
At the continuous level, the relative-energy method has recently been used to prove convergence of hyperbolic approximations to higher-order partial differential equations \cite{giesselmann2026convergence}, whose residual-based structure we transfer to the semidiscrete setting.
Compared to these works, the present result appears to be, to the best of our knowledge, the first a~priori error estimate of order $h^p$ for entropy-conservative high-order SBP semidiscretizations of nonlinear symmetrizable systems obtained by a discrete relative-entropy argument.

\subsection{Scope and restrictions of this article}

SBP operators \cite{svard2014review,fernandez2014review} provide a general framework for constructing structure-preserving discretizations.
While designed originally for finite differences (FDs) \cite{kreiss1974finite,strand1994summation,carpenter1994time}, they can be used to formulate finite volume methods \cite{nordstrom2001finite}, discontinuous Galerkin (DG) methods \cite{gassner2013skew,carpenter2014entropy}, continuous Galerkin (CG) methods \cite{hicken2016multidimensional,hicken2020entropy,abgrall2020analysisI}, flux reconstruction \cite{ranocha2016summation}, active flux methods \cite{barsukow2026stability}, meshless schemes \cite{hicken2025constructing,kwan2025robust}, and cut-cell methods \cite{taylor2026entropy,petri2026kinetic}.
Although the unifying framework of SBP operators has been used widely to construct large classes of structure-preserving methods, convergence theory has been limited to individual classes of methods.
For example, convergence rates of energy-stable FD SBP schemes for linear problems were established by Sv{\"a}rd and Nordstr{\"o}m \cite{svard2019convergence,svard2020convergence}.
For nonlinear problems, the classical error estimates for Runge--Kutta DG methods and smooth solutions by Zhang and Shu \cite{zhang2004error,zhang2006error,zhang2010stability} and their extension to schemes with quadrature \cite{huang2016error} rely on projections onto a polynomial basis and on upwind-type numerical fluxes;
they do not apply to general entropy-conservative SBP flux differencing schemes.
The recent work of Worku, Del Rey Fern{\'a}ndez, and Zingg \cite{worku2026convergence}, which motivated this work, is restricted to CG-type SBP operators.

In this work, we consider a deliberately abstract framework of diagonal-norm SBP operators, including FD, DG, and CG methods in multiple dimensions on curved meshes.
Thus, we obtain very general convergence results, but cannot use additional structures of specific classes of methods to improve the estimates.
Hence, the resulting convergence rates are sharp in general (e.g., for regular periodic FD methods), but may be suboptimal for specific classes of methods (e.g., for DG methods with even-degree polynomials or multi-block FD-SBP methods increasing the resolution by adding more blocks), as discussed in Section~\ref{sec:numerics}.

Next, the analysis considers smooth solutions of quite general hyperbolic conservation laws with strictly convex entropy and source terms depending on space and time.
This covers, in particular, the method of manufactured solutions \cite{salari2000code}.
To enable a unified formulation of SBP discretizations for such problems, we consider periodic boundary conditions.
While problems on bounded domains are arguably more interesting in practice, imposing boundary conditions in an entropy-stable way requires a case-by-case study of the specific system and boundary conditions \cite{svard2021entropy,svard2025entropy,winters2026numerical,winters2026aligning}, which is beyond the scope of this work.

Furthermore, the analysis in this work is restricted to entropy-conservative semidiscretizations with SBP operators based on collocation.
This simplification excludes generalized/hybridized SBP operators that would require more techniques, e.g., an entropy projection \cite{chan2018discretely,parsani2016entropy,chan2022entropy}.
It also excludes other ways to enforce entropy conservation, e.g., correction terms \cite{abgrall2018general,abgrall2022reinterpretation,edoh2024conservative,chan2025artificial} or formulations using the entropy variables and exact integration \cite{hughes1986new,jiang1994cell,hiltebrand2018entropy,badrkhani2025entropy,carnevali2026efficient}.
Moreover, we do not cover entropy-stable schemes, since the variety of approaches to add dissipation \cite{fjordholm2012arbitrarily,fisher2013high,hiltebrand2014entropy,hennemann2021provably,doehring2026volume} to an entropy-conservative baseline scheme is too large to be covered in a unified framework.
This aligns with the focus on smooth solutions.
In the presence of shocks and other discontinuities, entropy-stable methods should of course be used.
There are some related convergence results in the sense of generalized solution concepts such as measure-valued solutions \cite{fjordholm2017construction} or dissipative weak solutions \cite{lukacova2022convergence}.
However, these do not provide explicit high-order rates of convergence, which is a central aspect of the present work.

The paper is organized as follows.
Section~\ref{sec:setting} introduces the continuous problem, the SBP framework, the entropy-conservative semidiscretizations, and the assumptions.
Section~\ref{sec:main-result} states the main convergence theorem.
Section~\ref{sec:proof} develops the proof:
coercivity of the discrete relative entropy, the discrete entropy balance, the truncation error estimate, the exact cancellation of the linear error terms, the source and remainder bounds, the Gronwall and bootstrap arguments, and a discussion of the order condition.
Section~\ref{sec:applications} verifies the assumptions for scalar conservation laws, symmetric systems, the shallow water equations, and the compressible Euler equations.
Section~\ref{sec:numerics} presents numerical experiments.
Section~\ref{sec:summary} summarizes the results and discusses open problems.
Appendix~\ref{app:euler-constants} collects explicit constants for the Euler equations, and Appendix~\ref{app:encapsulated} verifies the operator assumptions for encapsulated curvilinear operators.

\section{Problem setting and assumptions}
\label{sec:setting}

In this section, we collect the continuous problem, the class of semidiscretizations, and the assumptions used in our analysis.
Throughout, we consider the periodic domain $\Omega = \bigl(\R / (L \Z)\bigr)^d$ in $d \ge 1$ space dimensions with period $L > 0$, Cartesian coordinate directions $m \in \{1, \dots, d\}$ with partial derivatives $\partial_m$, and a finite time horizon $T > 0$.

\subsection{Continuous problem and entropy pair}
\label{ssec:continuous}

We study hyperbolic conservation laws with a source term,
\begin{equation}
\label{eq:conservation-law}
  \partial_t \usol(t, x) + \sum_{m=1}^{d} \partial_m f_m\bigl(\usol(t, x)\bigr) = s(t, x),
  \qquad \usol(t, \cdot)\colon \Omega \to \mathcal{A},
\end{equation}
where $\mathcal{A} \subseteq \R^n$ is an open and convex set of admissible states, $f_m \in C^1(\mathcal{A}; \R^n)$ are the directional fluxes, and $s$ is a given source function specified in Assumption~\ref{asm:source} below.
For the compressible Euler equations, $\mathcal{A}$ is the set of states with positive density and pressure; for typical scalar problems such as Burgers' equation, $\mathcal{A} = \R$.

\begin{assumption}[Entropy pair]
\label{asm:entropy-pair}
  The conservation law \eqref{eq:conservation-law} possesses a strictly convex entropy $U \in C^2(\mathcal{A}; \R)$ and entropy fluxes $F_m \in C^1(\mathcal{A}; \R)$ satisfying the compatibility conditions $F_m'(a)^T = U'(a)^T f_m'(a)$ for all $a \in \mathcal{A}$ and all directions $m$.
\end{assumption}

We write $w(a) = U'(a)$ for the entropy variables and introduce the directional entropy potentials
\begin{equation}
\label{eq:entropy-potential}
  \psi_m(a) = w(a)^T f_m(a) - F_m(a).
\end{equation}
For systems, the existence of an entropy pair in the sense of Assumption~\ref{asm:entropy-pair} is equivalent to the requirement that all $U''(a) f_m'(a)$ are symmetric, i.e., that the entropy Hessian symmetrizes the conservation law;
this circle of ideas goes back to Godunov \cite{godunov1961interesting}, Friedrichs and Lax \cite{friedrichs1971systems}, and Mock \cite{mock1980systems};
see also \cite{harten1983symmetric,tadmor2003entropy} and \cite[Section~3.2]{dafermos2016hyperbolic}.
Differentiating \eqref{eq:entropy-potential} and using the compatibility condition together with the symmetry of $U''$ yields the identities
\begin{equation}
\label{eq:potential-gradient}
  \psi_m'(a) = U''(a) f_m(a),
\end{equation}
which are convenient for computing entropy potentials of concrete systems in Section~\ref{sec:applications}.

\begin{assumption}[Smooth solution]
\label{asm:smooth-solution}
  The balance law \eqref{eq:conservation-law} possesses a solution $\usol$ of class $C^{p+1}([0, T] \times \Omega; \mathcal{A})$, where $p \ge 1$ is the order of accuracy introduced in Assumption~\ref{asm:sbp} below.
  Moreover, there is a compact and convex set $K \subset \mathcal{A}$ containing a neighborhood of the range $\{ \usol(t, x) \mid (t,x) \in [0,T] \times \Omega \}$ such that
  \begin{equation}
  \label{eq:hessian-bounds}
    c_U \operatorname{I} \le U''(a) \le C_U \operatorname{I},
    \qquad a \in K,
  \end{equation}
  with constants $0 < c_U \le C_U < \infty$.
  If the source term does not vanish identically, we additionally assume $U \in C^3(K)$ and set $C_w = \sup_{K} \abs{U'''}$;
  this is used only in the source estimate of Lemma~\ref{lem:source-bound}.
  For $s \equiv 0$, we set $C_w = 0$.
\end{assumption}

\begin{remark}[Convexity of $K$]
\label{rem:convexity-K}
  The convexity of $K$ guarantees that segments $b + \sigma (a - b)$, $\sigma \in [0, 1]$, connecting states $a, b \in K$ stay in $K$.
  This is used for Taylor expansions with integral remainders in the proof of the main result.
  For scalar problems, $K$ can be any compact interval containing the range of $\usol$ in its interior.
  For the shallow water and compressible Euler equations, the admissible sets are convex, so that $K$ can be chosen as the closed convex hull of the range of $\usol$ plus a small margin.
\end{remark}

\subsection{Summation-by-parts operators}
\label{ssec:sbp}

Typically, SBP operators are defined locally, e.g., in multi-block FD methods or DGSEM.
To facilitate the analysis, we work with global operators that can be constructed by coupling operators continuously like in CG methods or discontinuously like in DGSEM using central fluxes (or the corresponding simultaneous approximation terms) \cite{hicken2016multidimensional,ranocha2021broad}.
Typical implementations will use the local operators for efficiency \cite{ranocha2023efficient}; the global operators are only a theoretical tool for the analysis.

We discretize $\Omega$ by nodes $x_1, \dots, x_N \in \Omega$ in an arbitrary layout;
duplicated coordinates are deliberately allowed, since element-based operators such as DGSEM carry copies of the nodes at element interfaces.
Grid functions are collected in vectors $\vec{g} = (g_1, \dots, g_N)^T$; for a function $g\colon \Omega \to \R$, we use the same symbol for its nodal values $g_i = g(x_i)$.
All operations and estimates extend componentwise to grid functions of vector-valued quantities.
We measure distances of nodes by the periodic distance $\distper(x_i, x_j)$ on the torus $\Omega$, so that duplicated coordinates have distance zero.
All results are statements about a family of discretizations indexed by $h$;
accordingly, every constant introduced in the assumptions below is understood to be independent of $h$ and $N$.

\begin{assumption}[Diagonal-norm periodic SBP operators of order $p$]
\label{asm:sbp}
  There are a nominal mesh size $h > 0$ and matrices $M, Q_1, \dots, Q_d \in \R^{N \times N}$ with the first-derivative operators $D_m = M^{-1} Q_m$ such that
  \begin{enumerate}[label=(\roman*)]
    \item
    \label{item:sbp-norm}
    $M = \diag(m_1, \dots, m_N)$ with $c_M h^d \le m_i \le C_M h^d$ and $c_N \le N h^d \le C_N$ for constants $0 < c_M \le C_M$ and $0 < c_N \le C_N$ independent of $N$,
    \item
    \label{item:sbp-skew}
    $Q_m + Q_m^T = 0$ for all $m$,
    \item
    \label{item:sbp-constants}
    $Q_m \vec{1} = \vec{0}$ for all $m$, where $\vec{1} = (1, \dots, 1)^T$,
    \item
    \label{item:sbp-accuracy}
    there are $p \ge 1$ and $C_D > 0$ such that for all $g \in C^{p+1}(\Omega)$ and all $m$
    \begin{equation}
    \label{eq:accuracy}
      \abs{(D_m \vec{g})_i - (\partial_m g)(x_i)}
      \le
      C_D h^p \|g\|_{C^{p+1}(\Omega)}
      \qquad \text{uniformly in } i \text{ and } N.
    \end{equation}
  \end{enumerate}
\end{assumption}

Here and throughout, $\|g\|_{C^{p+1}(\Omega)} = \max_{|\gamma| \le p+1} \sup_{x \in \Omega} |\partial^\gamma g(x)|$ denotes the usual $C^{p+1}$ norm, i.e., the largest supremum norm over $\Omega$ among all partial derivatives $\partial^\gamma g$ up to order $p+1$, where $\gamma \in \N_0^d$ is a multi-index;
for vector-valued $g$ the derivatives are taken componentwise and $|\cdot|$ is the Euclidean norm.

Property \ref{item:sbp-skew} is the periodic summation-by-parts (SBP) property;
it implies $\vec{1}^T Q_m = -(Q_m \vec{1})^T = \vec{0}^T$ by \ref{item:sbp-constants}, so that all row and column sums of each $Q_m$ vanish.
Property \ref{item:sbp-constants} states that constants are differentiated exactly and is satisfied by every consistent method.
The node-count coupling in \ref{item:sbp-norm} expresses that $h$ plays the role of an average mesh width;
it implies $\sum_i m_i \le C_M C_N$ for all $N$.

\begin{remark}[Zero row sums as a metric identity]
\label{rem:gcl}
  For operators acting on curved grids, obtained by transforming reference operators to the physical domain, property \ref{item:sbp-constants} for the transformed operators is precisely the discrete free-stream-preservation condition, i.e., the discrete metric identities \cite{kopriva2006metric}, which in the finite-difference literature are also referred to as the geometric conservation law.
  This connection is made quantitative in Appendix~\ref{app:encapsulated}.
\end{remark}

\begin{remark}[The order $p$]
\label{rem:order-p}
  We define the order of accuracy $p$ directly by the estimate \eqref{eq:accuracy} for the assembled global operators acting on smooth grid functions, since this is precisely what the proof of the main result uses.
  Polynomial exactness alone does not imply \eqref{eq:accuracy}, not even together with the quasi-uniformity \ref{item:sbp-norm}:
  adding to $D_m$ an arbitrarily large matrix that locally annihilates the nodal values of all polynomials of degree at most $p$ preserves the exactness while destroying any bound on the action on general smooth data.
  A stability condition on the differentiation matrices is needed in addition.
  The fixed local stencil of Assumption~\ref{asm:bandwidth} below is such a condition and is satisfied by the standard operators:
  it gives $|(D_m)_{ij}| = m_i^{-1} |(Q_m)_{ij}| \le C_q h^{-1} / c_M$ and hence the row-sum bound $\sum_j |(D_m)_{ij}| \le \kappa C_q h^{-1} / c_M$, so that expanding $g$ around $x_i$ into its Taylor polynomial of degree $p$ plus a remainder bounded by $(\sqrt{d} \, \omega h)^{p+1} \|g\|_{C^{p+1}(\Omega)} / (p+1)!$ on the stencil, and applying the exactness to the polynomial part, yields \eqref{eq:accuracy} with a constant $C_D$ depending only on $\kappa$, $C_q$, $\omega$, $c_M$, $p$, and $d$.
  Any other uniform bound on the differentiation coefficients or on the operator norms serves the same purpose.
  See also \cite{glaubitz2026why} for related discussion on sparsity and additional properties of SBP operators.
\end{remark}

\begin{remark}[Operator classes]
\label{rem:operator-classes}
  Assumption~\ref{asm:sbp} covers, on Cartesian grids, tensor products of periodic finite-difference SBP operators \cite{kreiss1974finite,strand1994summation,carpenter1994time,svard2014review,fernandez2014review}, of DGSEM operators on Legendre-Gauss-Lobatto nodes with fixed polynomial degree \cite{gassner2013skew,carpenter2014entropy}, and of the standard nodal or mass-lumped realizations of continuous Galerkin \cite{hicken2016multidimensional,abgrall2020analysisI,ranocha2021broad} and flux reconstruction methods \cite{huynh2007flux,vincent2011newclass,ranocha2016summation} that are diagonal-norm SBP operators.
  Beyond tensor-product grids, the assumptions are verified in Appendix~\ref{app:encapsulated} for encapsulated operators on smooth curvilinear periodic grids in the sense of {\AA}lund and Nordstr{\"o}m \cite{alund2019encapsulated}, in two and in three space dimensions;
  metric-averaged curved-mesh flux differencing schemes reduce to the framework exactly and are covered whenever their discrete metric terms satisfy the discrete metric identities and are accurate of order $p$, as shown there.
  We emphasize that such global operators are a theoretical tool for the analysis;
  efficient implementations use element-local kernels instead of assembled global matrices \cite{ranocha2023efficient}.
\end{remark}

The assumptions do not cover formulations with a non-diagonal mass matrix.
Moreover, single-domain pseudospectral (Fourier) collocation methods satisfy Assumption~\ref{asm:sbp} only with $p$ depending on $N$ and are excluded from the structural assumption below; see the discussion in Section~\ref{sec:summary}.

\subsection{Entropy-conservative semidiscretizations}
\label{ssec:ec-semidiscretization}

We use the flux differencing framework of \cite{lefloch2002fully,fisher2013high} based on entropy-conservative fluxes \cite{tadmor1987numerical,tadmor2003entropy}.

\begin{assumption}[Entropy-conservative fluxes]
\label{asm:ec-flux}
  For each direction $m$, the two-point flux $\fnum_m \in C^{p+1}(\mathcal{A} \times \mathcal{A}; \R^n)$ is
  \begin{enumerate}[label=(\roman*)]
    \item
    \label{item:flux-symmetric}
    symmetric, $\fnum_m(a, b) = \fnum_m(b, a)$,
    \item
    \label{item:flux-consistent}
    consistent, $\fnum_m(a, a) = f_m(a)$,
    \item
    \label{item:flux-ec}
    entropy-conservative in the sense of Tadmor \cite{tadmor1987numerical,tadmor2003entropy}, i.e.,
    \begin{equation}
    \label{eq:ec-condition}
      \bigl(w(b) - w(a)\bigr)^T \fnum_m(a, b) = \psi_m(b) - \psi_m(a),
      \qquad a, b \in \mathcal{A}.
    \end{equation}
  \end{enumerate}
\end{assumption}

Given such fluxes, we consider the flux differencing semidiscretization \cite{lefloch2002fully,fisher2013high,gassner2016split,chen2017entropy,ranocha2018comparison}
\begin{equation}
\label{eq:semidiscretization}
  m_i \, \frac{\dif}{\dif t} \unum_i(t) + \sum_{m=1}^{d} 2 \sum_{j=1}^{N} (Q_m)_{ij} \fnum_m\bigl(\unum_i(t), \unum_j(t)\bigr) = m_i \, s(t, x_i),
  \quad i \in \{1, \dots, N\},
\end{equation}
with initial data $\unum_i(0)$ approximating $\usol(0, x_i)$ as specified in Assumption~\ref{asm:initial-data} below and the source values collocated at the nodes.
Since the fluxes are defined on $\mathcal{A} \times \mathcal{A}$, the semidiscretization is well-defined as long as all states $\unum_i(t)$ remain in $\mathcal{A}$;
part of the main result is that they even remain in the compact set $K \subset \mathcal{A}$ for sufficiently fine meshes.

\subsection{Discrete relative entropy and structural assumptions}
\label{ssec:structural-assumptions}

Let $\usol$ be the smooth solution from Assumption~\ref{asm:smooth-solution}.
We compare the numerical solution $\vnum(t)$ of \eqref{eq:semidiscretization} with the grid function $\vsol = (\usol_i)_{i=1}^N$ of nodal samples
\begin{equation}
\label{eq:nodal-samples}
  \usol_i(t) = \usol(t, x_i),
  \qquad i \in \{1, \dots, N\},
\end{equation}
and write $e_i(t) = \unum_i(t) - \usol_i(t)$ for the nodal errors.
We abbreviate the entropy variables along both trajectories by
\begin{equation}
  \wnum_i = w(\unum_i),
  \qquad
  \wsol_i = w(\usol_i).
\end{equation}
Our central quantity is the discrete relative entropy
\begin{equation}
\label{eq:relative-entropy}
  E_h(t)
  =
  \sum_{i=1}^{N} m_i \Bigl( U(\unum_i) - U(\usol_i) - \wsol_i^T (\unum_i - \usol_i) \Bigr),
\end{equation}
the discrete analogue of the relative entropy of Dafermos \cite{Dafermos1979} and DiPerna \cite{Diperna1979} with respect to the quadrature induced by $M$.
By strict convexity of $U$, $E_h(t) \ge 0$, with equality if and only if $\vnum(t) = \vsol(t)$ \cite[Section~3.1.3]{boyd2004convex}.

While all states are in $K$, we may expand each numerical flux around the exact states and define the flux Taylor remainders $r^m_{ij}$ by
\begin{equation}
\label{eq:flux-taylor}
  \fnum_m(\unum_i, \unum_j)
  =
  \fnum_m(\usol_i, \usol_j)
  + \partial_a \fnum_m(\usol_i, \usol_j) \, e_i
  + \partial_b \fnum_m(\usol_i, \usol_j) \, e_j
  + r^m_{ij},
\end{equation}
where $\partial_{a/b} \fnum_m$ denotes the derivative of $\fnum_m$ with respect to its first/second argument.
Since $K$ is convex and the fluxes are $C^{p+1}$ near $K \times K$ with $p \ge 1$, Taylor's theorem yields
\begin{equation}
\label{eq:remainder-bound}
  \abs{r^m_{ij}}
  \le
  \frac{C_S}{2} \bigl( |e_i|^2 + |e_j|^2 \bigr),
  \qquad
  C_S = \max_m \sup_{K \times K} \abs{\mathrm{D}^2 \fnum_m} < \infty.
\end{equation}

\begin{assumption}[Structural remainder bound]
\label{asm:remainder}
  There is a constant $C_R \ge 0$, independent of $h$ and $N$, such that, whenever $\unum_i \in K$ for all $i$,
  \begin{equation}
  \label{eq:remainder-hypothesis}
    \left| \sum_{m=1}^{d} \sum_{i,j=1}^{N} (\wsol_i - \wsol_j)^T (Q_m)_{ij} \, r^m_{ij} \right|
    \le
    C_R E_h.
  \end{equation}
\end{assumption}

Assumption~\ref{asm:remainder} is the only structural hypothesis of the main theorem beyond the SBP property and entropy conservation.
It is implied by the following checkable condition on the sparsity and locality of the operators, as we prove in Lemma~\ref{lem:bandwidth}.

\begin{assumptionstar}[Fixed local stencil]
\label{asm:bandwidth}
  There are constants $\omega, C_q > 0$ and $\kappa \in \N$ independent of $N$ such that, for each direction $m$,
  \begin{enumerate}[label=(\roman*)]
    \item
    \label{item:bandwidth-stencil}
    $(Q_m)_{ij} \ne 0$ implies $\distper(x_i, x_j) \le \omega h$,
    \item
    \label{item:bandwidth-count}
    each row of $Q_m$ has at most $\kappa$ nonzero entries,
    \item
    \label{item:bandwidth-entry}
    $|(Q_m)_{ij}| \le C_q h^{d-1}$.
  \end{enumerate}
\end{assumptionstar}

\begin{remark}[On the fixed-local-stencil condition]
\label{rem:stencil-condition}
  Conditions \ref{item:bandwidth-count} and \ref{item:bandwidth-entry} imply the row-sum bound $\sum_j |(Q_m)_{ij}| \le \kappa C_q h^{d-1}$, and by skew-symmetry the same bound holds for the column sums;
  the proofs below use only these sum bounds together with the locality \ref{item:bandwidth-stencil}, so the sum bound may replace \ref{item:bandwidth-count} and \ref{item:bandwidth-entry} as the weakest sufficient hypothesis.
  The locality condition \ref{item:bandwidth-stencil} is stated in terms of the physical periodic distance and is independent of any ordering of the unknowns.
  A bounded number of nonzeros per row alone would not suffice:
  a sparse matrix may still couple physically distant nodes, which destroys the smallness of $|\wsol_i - \wsol_j|$ exploited in Lemma~\ref{lem:bandwidth}.
  The entry scaling \ref{item:bandwidth-entry} is natural, since $Q_m = M D_m$ combines masses of size $\O(h^d)$ with difference operators of size $\O(h^{-1})$.
  For $d = 1$, the condition reduces to the fixed-stencil property of periodic finite-difference operators;
  all operator families listed in Remark~\ref{rem:operator-classes} satisfy it under $h$-refinement with fixed stencil or fixed polynomial degree.
\end{remark}

\begin{assumption}[Source term]
\label{asm:source}
  The source $s\colon [0, T] \times \Omega \to \R^n$ is continuous and bounded, and it enters the scheme \eqref{eq:semidiscretization} by exact collocation, i.e., by the values $s(t, x_i)$.
\end{assumption}

We emphasize that no structural relation between the source and the entropy is assumed;
the mechanism behind this generality is explained in Section~\ref{ssec:source-bound}.
If the source terms are state-dependent, as for the shallow water equations with variable bathymetry or the compressible Euler equations with gravity, additional residual terms appear that require separate estimates;
this is beyond the scope of the present work.

\begin{assumption}[Consistent initial data]
\label{asm:initial-data}
  The initial data satisfy $\unum_i(0) \in K$ for all $i$ and $E_h(0)^{1/2} \le C_0 h^p$ with a constant $C_0 \ge 0$ independent of $h$ and $N$.
\end{assumption}

If the scheme is initialized by nodal sampling, $\unum_i(0) = \usol(0, x_i)$, then $E_h(0) = 0$ and Assumption~\ref{asm:initial-data} holds trivially.

\begin{assumption}[Order condition]
\label{asm:order}
  The order of accuracy satisfies $p > d/2$.
\end{assumption}

Assumption~\ref{asm:order} is needed only for the $L^\infty$ bootstrap argument in the proof.
In one space dimension it is satisfied by every consistent method;
for $d \in \{2, 3\}$ it excludes only first-order methods.
Its role, conditional versions of the main result that avoid it, and a possible route towards removing it are discussed in Section~\ref{ssec:order-condition}.
The numerical results in Section~\ref{sec:numerics-2d} suggest that it is a technical assumption that is not necessary for convergence in practice.

\section{Main result}
\label{sec:main-result}

Our main result is the following convergence theorem.
The proof is developed in Section~\ref{sec:proof}.

\begin{theorem}[Convergence of entropy-conservative SBP semidiscretizations]
\label{thm:convergence}
  Let Assumptions~\ref{asm:entropy-pair}, \ref{asm:smooth-solution}, \ref{asm:sbp}, \ref{asm:ec-flux}, \ref{asm:remainder}, \ref{asm:source}, \ref{asm:initial-data}, and \ref{asm:order} be satisfied.
  Then, there is $h_0 > 0$ depending on $T$ and the data such that for all $0 < h < h_0$, the semidiscretization \eqref{eq:semidiscretization} has a unique solution on $[0, T]$, all states $\unum_i(t)$ remain in $K$, and
  \begin{equation}
  \label{eq:main-estimate}
    \norm[1]{\vnum(t) - \vsol(t)}_M
    \le
    C_T h^p,
    \qquad t \in [0, T],
  \end{equation}
  where $\|\vec{a}\|_M^2 = \sum_i m_i |a_i|^2$ and $C_T$ is independent of $h$;
  in the presence of a source, $C_T$ additionally depends on $\norm{s}_{L^\infty}$ and $C_w$.
  In particular, the numerical solution converges to the smooth solution at the order of accuracy $p$ of the SBP operators.
\end{theorem}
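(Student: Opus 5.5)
The plan is a discrete relative-entropy estimate combined with a continuation (bootstrap) argument. Local existence and uniqueness of the ODE system \eqref{eq:semidiscretization} follow from Picard--Lindelöf, since the fluxes are $C^{p+1}$ on $\mathcal{A}\times\mathcal{A}$. Let $t^*$ be the supremum of times $\le T$ up to which the solution exists and all $\unum_i$ lie in $K$. On $[0,t^*)$ I first establish \emph{coercivity}. By the Hessian bounds \eqref{eq:hessian-bounds} and the convexity of $K$, Taylor's formula with integral remainder gives
\[
\tfrac{c_U}{2}\|\vnum-\vsol\|_M^2 \le E_h \le \tfrac{C_U}{2}\|\vnum-\vsol\|_M^2 .
\]
Hence it suffices to bound $E_h$ by $C h^{2p}$.

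Next I differentiate $E_h$ in time. This gives
\[
\dot E_h = \sum_i m_i (\wnum_i-\wsol_i)^T \dot{\unum}_i - \sum_i m_i (\dot{\usol}_i)^T U''(\usol_i)\, e_i .
\]
For the first term I insert the scheme. Using the skew-symmetry of $Q_m$ and the entropy-conservation condition \eqref{eq:ec-condition}, the contribution $\sum_i \wnum_i^T(\cdot)$ reduces to $\sum_{ij}(Q_m)_{ij}(\psi_m(\unum_j)-\psi_m(\unum_i))$, which vanishes by the zero row and column sums. The remaining flux term, $2\sum_{ij}\wsol_i^T(Q_m)_{ij}\fnum_m(\unum_i,\unum_j)$, equals $\sum_{ij}(\wsol_i-\wsol_j)^T(Q_m)_{ij}\fnum_m(\unum_i,\unum_j)$ by symmetry of the flux and skew-symmetry of $Q_m$. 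I expand it with \eqref{eq:flux-taylor}. The zeroth-order part $\fnum_m(\usol_i,\usol_j)$ is treated as the exact-solution contribution. The pure remainder part is bounded by $C_R E_h$ through Assumption~\ref{asm:remainder}. For the time derivative of the exact solution, I write $\dot\usol_i = -\sum_m \partial_m f_m(\usol)(x_i) + s(t,x_i)$. I then define the truncation error
\[
\tau_i = 2\sum_m (D_m\text{-flux differencing of } \vsol)_i - \sum_m\partial_m f_m(\usol)(x_i).
\]
This is $\O(h^p)$ by the standard argument: the flux-differencing term equals $D_m$ applied to $\fnum_m(\usol_i,\usol(\cdot))$ at $x_i$, and by symmetry and consistency its derivative at the diagonal is $\tfrac12\partial_m f_m$. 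The accuracy bound \eqref{eq:accuracy} then applies to the smooth function $y\mapsto \fnum_m(\usol(x_i),\usol(y))$, uniformly in $i$.

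The key algebraic step is the \emph{exact cancellation of the linear terms}. The terms linear in $e$ come from $\partial_a\fnum_m, \partial_b\fnum_m$ tested against $\wsol_i-\wsol_j$, and from $-(\dot\usol_i)^TU''(\usol_i)e_i$. I would show that, after using $\psi_m' = U'' f_m$ and differentiating the entropy-conservation identity \eqref{eq:ec-condition} with respect to $b$, these combine into $\sum_i m_i\, e_i^T U''(\usol_i)\tau_i$ plus a source mismatch. In other words, they assemble into the entropy-weighted residual of the exact solution, as in the continuous relative-entropy calculation. Following the residual structure of \cite{giesselmann2026convergence}, I view $\vsol$ as satisfying the scheme with residual $m_i\tau_i$, and compare the two trajectories.

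The source contributes $\sum_i m_i\bigl(\wnum_i-\wsol_i - U''(\usol_i)e_i\bigr)^T s(t,x_i)$, which is bounded by $C_w\|s\|_{L^\infty}\|e\|_M^2 \lesssim E_h$. The residual term is bounded by Cauchy--Schwarz and Young's inequality by $\|e\|_M^2 + C h^{2p}$. Altogether this gives
\[
\dot E_h \le C_1 E_h + C_2 h^{2p},
\]
and Gronwall's lemma together with Assumption~\ref{asm:initial-data} yields $E_h(t)\le C h^{2p}$ on $[0,t^*)$. This establishes \eqref{eq:main-estimate} there.

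Finally, the bootstrap shows $t^*=T$. From $m_i\ge c_M h^d$ I get
\[
\max_i|e_i| \le (c_M h^d)^{-1/2}\|e\|_M \le C h^{p-d/2}.
\]
This tends to zero by Assumption~\ref{asm:order}. Since $K$ contains a neighborhood of the range of $\usol$, for $h<h_0$ the states stay strictly inside $K$, so the solution can be continued and the invariance persists up to $T$. I expect the main obstacle to be the exact cancellation of the linear error terms. It requires carefully identifying the derivative of the two-point flux combination with the truncation error, without losing a factor $h^{-1}$ from the entries of $Q_m$, and using only the SBP properties and \eqref{eq:ec-condition}.
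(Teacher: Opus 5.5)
Your proposal is correct and follows essentially the same route as the paper: coercivity of $E_h$, the cancellation of the $\wnum$ terms by entropy conservation, the flux Taylor expansion with the structural remainder bound, and exact cancellation of the linear terms via the differentiated Tadmor condition (using only the $b$-derivative together with flux symmetry suffices). The remaining steps also match: the quadratic source bound, Gronwall, and the inverse-estimate bootstrap under $p > d/2$. Two minor remarks: your Young's-inequality Gronwall for $E_h$ is an equivalent variant of the paper's argument for $E_h^{1/2}$, and the zeroth-order term $\sum_{i,j}(\wsol_i-\wsol_j)^T(Q_m)_{ij}\fnum_m(\usol_i,\usol_j)$ vanishes exactly by \eqref{eq:ec-condition} and the zero row and column sums of $Q_m$. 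Since the linear cancellation is likewise purely algebraic, no factor $h^{-1}$ is ever at stake.
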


The rate $p$ in \eqref{eq:main-estimate} is the order of the accuracy estimate \eqref{eq:accuracy} of the assembled operators.
Numerical solutions may converge faster;
e.g., DG methods exhibit superconvergence phenomena depending on the polynomial degree and the numerical flux \cite{adjerid2002posteriori,cheng2010superconvergence,cao2014superconvergence}.
Theorem~\ref{thm:convergence} makes no claim about such improved rates.

The structural bound of Assumption~\ref{asm:remainder} is the theorem hypothesis;
the fixed-local-stencil condition of Assumption~\ref{asm:bandwidth} is a sufficient condition that is easy to check in practice (see Lemma~\ref{lem:bandwidth}).
The order condition $p > d/2$ of Assumption~\ref{asm:order} enters only through the inverse estimate in the bootstrap argument of Section~\ref{ssec:gronwall};
see Section~\ref{ssec:order-condition} for a detailed discussion.

Theorem~\ref{thm:convergence} places no structural restriction on how the source interacts with the entropy;
in particular, sources constructed by the method of manufactured solutions \cite{salari2000code} are covered.
The reason is that the same source values enter the numerical and the exact dynamics, so that their contributions to the relative entropy cancel up to terms that are quadratic in the error (Lemma~\ref{lem:source-bound});
the mild price is the additional regularity $U \in C^3(K)$ of Assumption~\ref{asm:smooth-solution}, which is vacuous for quadratic entropies.

\section{Proof of the main theorem}
\label{sec:proof}

Throughout this section, the assumptions of Theorem~\ref{thm:convergence} are in force.
All estimates are stated for time intervals on which the numerical states $\unum_i(t)$ remain in $K$;
the bootstrap argument in Section~\ref{ssec:gronwall} shows that this covers $[0, T]$ for sufficiently fine meshes.
Generic constants $C$ may change from line to line but are always independent of $h$, $N$, and $t \in [0, T]$.

\subsection{Coercivity of the relative entropy}
\label{ssec:relative-entropy}

We first record that the discrete relative entropy $E_h$ \eqref{eq:relative-entropy} is equivalent to the squared $M$-norm of the error $\vec{e} = \vnum - \vsol$.

\begin{lemma}[Coercivity]
\label{lem:coercivity}
  If $\unum_i \in K$ for all $i$, then
  \begin{equation}
  \label{eq:coercivity}
    \frac{c_U}{2} \|\vec{e}\|_M^2
    \le
    E_h
    \le
    \frac{C_U}{2} \|\vec{e}\|_M^2.
  \end{equation}
\end{lemma}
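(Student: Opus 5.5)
The plan is a nodewise Taylor expansion of the entropy around the exact state, followed by summation with the positive quadrature weights. Fix a node $i$ and write $a = \unum_i$, $b = \usol_i$, $e_i = a - b$. Both states lie in $K$: $a$ by hypothesis, and $b$ because $K$ contains the range of $\usol$ (Assumption~\ref{asm:smooth-solution}). Since $K$ is convex (Remark~\ref{rem:convexity-K}), the segment $b + \sigma e_i$, $\sigma \in [0,1]$, stays in $K$. Because $U \in C^2$ near $K$, Taylor's theorem with integral remainder gives
\begin{equation*}
  U(a) - U(b) - w(b)^T (a - b)
  =
  \int_0^1 (1 - \sigma) \, e_i^T U''(b + \sigma e_i) \, e_i \dif \sigma ,
\end{equation*}
using $w = U'$.

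Next I apply the Hessian bounds \eqref{eq:hessian-bounds} pointwise along the segment. They give $c_U |e_i|^2 \le e_i^T U''(b + \sigma e_i) e_i \le C_U |e_i|^2$ for every $\sigma$. Since $\int_0^1 (1-\sigma) \dif \sigma = 1/2$, the nodal summand of \eqref{eq:relative-entropy} is therefore bounded below by $\frac{c_U}{2} |e_i|^2$ and above by $\frac{C_U}{2} |e_i|^2$.

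Finally, I multiply by $m_i > 0$ (Assumption~\ref{asm:sbp}\ref{item:sbp-norm}) and sum over $i$. This yields \eqref{eq:coercivity}, since $\|\vec{e}\|_M^2 = \sum_i m_i |e_i|^2$.

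No step is a genuine obstacle. The only point needing care is that the Hessian bounds must hold along the whole segment and not just at its endpoints, which is exactly where the convexity of $K$ is used.
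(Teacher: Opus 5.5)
Your proposal is correct and follows the paper's proof essentially verbatim. Both arguments use a nodewise Taylor expansion of $U$ with integral remainder along the segment $\usol_i + \sigma e_i$, which lies in $K$ by convexity. They then apply the Hessian bounds \eqref{eq:hessian-bounds} with $\int_0^1 (1-\sigma) \dif \sigma = 1/2$, and sum with the positive weights $m_i$.
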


\begin{proof}
  Taylor expansion with integral remainder yields
  \begin{equation}
    U(\unum_i) - U(\usol_i) - \wsol_i^T e_i
    =
    \int_0^1 (1 - \sigma) \, e_i^T U''(\usol_i + \sigma e_i) e_i \dif \sigma.
  \end{equation}
  Since $K$ is convex, the segment $\usol_i + \sigma e_i$, $\sigma \in [0, 1]$, lies in $K$, so that the Hessian bounds \eqref{eq:hessian-bounds} give
  $\frac{c_U}{2} |e_i|^2 \le U(\unum_i) - U(\usol_i) - \wsol_i^T e_i \le \frac{C_U}{2} |e_i|^2$.
  Multiplying by $m_i$ and summing over $i$ yields \eqref{eq:coercivity}.
\end{proof}

\subsection{Discrete entropy balance}
\label{ssec:entropy-conservation}

The semidiscretization \eqref{eq:semidiscretization} reproduces the entropy balance of the continuous problem exactly.
For vanishing sources, this is the classical discrete entropy conservation \cite{tadmor1987numerical,fisher2013high,chen2017entropy,ranocha2018comparison};
we include the short proof since the same mechanism is used repeatedly below.

\begin{lemma}[Discrete entropy balance]
\label{lem:entropy-conservation}
  As long as the solution of \eqref{eq:semidiscretization} stays in $\mathcal{A}$,
  \begin{equation}
    \frac{\dif}{\dif t} \sum_{i=1}^N m_i U(\unum_i) = \sum_{i=1}^N m_i \, (\wnum_i)^T s(t, x_i).
  \end{equation}
  In particular, the total discrete entropy is conserved exactly if the source vanishes.
\end{lemma}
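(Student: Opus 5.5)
The plan is the standard flux-differencing argument: multiply the scheme \eqref{eq:semidiscretization} by the entropy variables, use the skew-symmetry of $Q_m$ to symmetrize the double sum, and then collapse it with Tadmor's condition \eqref{eq:ec-condition} and the zero row sums.

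First, by the chain rule and $w = U'$,
\begin{equation*}
  \frac{\dif}{\dif t} \sum_i m_i U(\unum_i) = \sum_i (\wnum_i)^T m_i \frac{\dif}{\dif t} \unum_i .
\end{equation*}
Substituting the semidiscretization gives
\begin{equation*}
  \frac{\dif}{\dif t} \sum_i m_i U(\unum_i) = \sum_i m_i (\wnum_i)^T s(t, x_i) - \sum_m 2 \sum_{i,j} (\wnum_i)^T (Q_m)_{ij} \fnum_m(\unum_i, \unum_j).
\end{equation*}
It therefore suffices to show that, for each direction $m$, the double sum
\begin{equation*}
  S_m = 2 \sum_{i,j} (\wnum_i)^T (Q_m)_{ij} \fnum_m(\unum_i, \unum_j)
\end{equation*}
vanishes.

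Next, I symmetrize $S_m$. Swap the summation indices $i \leftrightarrow j$, then use $(Q_m)_{ji} = -(Q_m)_{ij}$ from Assumption~\ref{asm:sbp}\ref{item:sbp-skew} and the symmetry of $\fnum_m$ from Assumption~\ref{asm:ec-flux}\ref{item:flux-symmetric}. This gives
\begin{equation*}
  S_m = -2 \sum_{i,j} (\wnum_j)^T (Q_m)_{ij} \fnum_m(\unum_i, \unum_j).
\end{equation*}
Averaging the two expressions for $S_m$ yields
\begin{equation*}
  S_m = -\sum_{i,j} (Q_m)_{ij} \bigl(\wnum_j - \wnum_i\bigr)^T \fnum_m(\unum_i, \unum_j).
\end{equation*}

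Then I apply the entropy-conservation condition \eqref{eq:ec-condition} with $a = \unum_i$ and $b = \unum_j$. This replaces each bracket by $\psi_m(\unum_j) - \psi_m(\unum_i)$, so
\begin{equation*}
  S_m = -\sum_{i,j} (Q_m)_{ij} \psi_m(\unum_j) + \sum_{i,j} (Q_m)_{ij} \psi_m(\unum_i).
\end{equation*}
The second sum vanishes by the zero row sums $Q_m \vec{1} = \vec{0}$ of Assumption~\ref{asm:sbp}\ref{item:sbp-constants}. The first sum vanishes by the zero column sums $\vec{1}^T Q_m = \vec{0}^T$, which follow from skew-symmetry as noted after Assumption~\ref{asm:sbp}. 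Hence $S_m = 0$ for every $m$, which proves the balance identity. For $s \equiv 0$, conservation of the total discrete entropy follows immediately.

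I do not expect a real obstacle here; the only care needed is bookkeeping. Two points should be stated explicitly. All states must lie in $\mathcal{A}$, so that $w$, $\fnum_m$ and $\psi_m$ are defined and the trajectory is differentiable; this is exactly the hypothesis of the lemma. The factor $2$ in the scheme is precisely what cancels the $\tfrac12$ produced by the symmetrization, so it must be tracked through the averaging step.
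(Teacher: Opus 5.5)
Your proof is correct and follows essentially the same route as the paper: you contract with the entropy variables, antisymmetrize via the skew-symmetry of $Q_m$ and the symmetry of $\fnum_m$, apply Tadmor's condition \eqref{eq:ec-condition}, and telescope using the vanishing row and column sums. The only cosmetic difference is that you evaluate \eqref{eq:ec-condition} at $(a,b) = (\unum_i, \unum_j)$ rather than $(\unum_j, \unum_i)$, which merely flips an overall sign.
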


\begin{proof}
  Contracting \eqref{eq:semidiscretization} with the entropy variables $\wnum_i$ yields
  \begin{multline}
    \frac{\dif}{\dif t} \sum_i m_i U(\unum_i)
    =
    \sum_i m_i (\wnum_i)^T \frac{\dif \unum_i}{\dif t}
    \\
    =
    - \sum_{m} 2 \sum_{i,j} (\wnum_i)^T (Q_m)_{ij} \fnum_m(\unum_i, \unum_j)
    + \sum_i m_i (\wnum_i)^T s(t, x_i).
  \end{multline}
  Fix a direction $m$.
  Exchanging the summation indices $i \leftrightarrow j$, using the skew-symmetry of $Q_m$ and the symmetry of $\fnum_m$, and averaging the two expressions gives
  \begin{equation}
  \label{eq:antisymmetrization}
    2 \sum_{i,j} (\wnum_i)^T (Q_m)_{ij} \fnum_m(\unum_i, \unum_j)
    =
    \sum_{i,j} (\wnum_i - \wnum_j)^T (Q_m)_{ij} \fnum_m(\unum_i, \unum_j).
  \end{equation}
  By the entropy-conservation condition \eqref{eq:ec-condition} with $a = \unum_j$ and $b = \unum_i$, each summand reduces to the potential difference $(Q_m)_{ij} \bigl( \psi_m(\unum_i) - \psi_m(\unum_j) \bigr)$.
  Thus,
  \begin{equation}
  \begin{aligned}
    &\quad
    \sum_{i,j} (Q_m)_{ij} \bigl( \psi_m(\unum_i) - \psi_m(\unum_j) \bigr)
    \\
    &=
    \sum_{i} \psi_m(\unum_i) \sum_{j} (Q_m)_{ij}
    -
    \sum_{j} \psi_m(\unum_j) \sum_{i} (Q_m)_{ij}
    =
    0,
  \end{aligned}
  \end{equation}
  since all row and column sums of $Q_m$ vanish.
\end{proof}

\subsection{Truncation error of the flux differencing form}
\label{ssec:truncation}

Next, we analyze the consistency of the scheme, i.e., we insert the nodal samples $\vsol(t)$ of the smooth solution into the scheme and quantify the defect.
Results of this type go back to \cite{lefloch2002fully,fisher2013high} and were generalized to arbitrary smooth, consistent, and symmetric two-point fluxes in \cite[Theorem~3.1]{ranocha2018comparison} and \cite[Theorem~3.3]{chen2017entropy};
we give a short self-contained proof based on \cite[Theorem~3.3]{chen2017entropy} yielding the uniform-in-$i$ bound with the constants required later.
We define the truncation errors $\tau_i$ by
\begin{equation}
\label{eq:truncation-definition}
  m_i \tau_i
  =
  m_i \frac{\dif \usol_i}{\dif t}
  + \sum_{m=1}^{d} 2 \sum_{j} (Q_m)_{ij} \fnum_m(\usol_i, \usol_j)
  - m_i \, s(t, x_i),
  \quad i \in \{1, \dots, N\}.
\end{equation}

\begin{lemma}[Truncation error]
\label{lem:truncation}
  Under Assumptions~\ref{asm:smooth-solution}, \ref{asm:sbp}, \ref{asm:ec-flux}, and \ref{asm:source},
  \begin{equation}
    \max_{i} |\tau_i| \le C h^p
    \qquad \text{and} \qquad
    \norm{\vec{\tau}}_M \le C (C_M C_N)^{1/2} h^p,
  \end{equation}
  where $C$ depends only on $d$, $C_D$, $\norm{\usol}_{C^{p+1}}$, and the $C^{p+1}$-norms of the fluxes near $K \times K$.
\end{lemma}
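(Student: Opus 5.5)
The plan is to use the exact PDE to eliminate the time derivative and the source, which reduces $\tau_i$ to a pure spatial defect of flux differencing. Since $\partial_t \usol(t,x_i) + \sum_m \partial_m f_m(\usol)(t,x_i) = s(t,x_i)$ and $\frac{\dif}{\dif t}\usol_i = \partial_t \usol(t,x_i)$, dividing \eqref{eq:truncation-definition} by $m_i$ gives
\begin{equation*}
  \tau_i = \sum_{m=1}^d \Bigl( 2 \sum_j (D_m)_{ij} \fnum_m(\usol_i,\usol_j) - \partial_m f_m(\usol)(t,x_i) \Bigr).
\end{equation*}
The source cancels exactly because it is collocated (Assumption~\ref{asm:source}). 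It therefore suffices to bound each directional defect uniformly in $i$ by $C h^p$.

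For fixed $i$, $t$, and $m$, I would follow \cite[Theorem~3.3]{chen2017entropy} and introduce the auxiliary smooth function $g^{(i)}(x) = \fnum_m(\usol(t,x_i), \usol(t,x))$ on $\Omega$. Its nodal values are $g^{(i)}_j = \fnum_m(\usol_i,\usol_j)$, so the flux differencing term equals $2 (D_m \vec{g}^{(i)})_i$. Applying the accuracy estimate \eqref{eq:accuracy} to $g^{(i)}$ gives
\begin{equation*}
  2(D_m\vec g^{(i)})_i = 2\,\partial_m g^{(i)}(x_i) + \O(h^p),
  \qquad
  \partial_m g^{(i)}(x_i) = \partial_b \fnum_m(\usol_i,\usol_i)\, \partial_m \usol(t,x_i).
\end{equation*}
Consistency and symmetry of $\fnum_m$ give $\partial_a \fnum_m(a,a) = \partial_b \fnum_m(a,a)$. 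Differentiating $\fnum_m(a,a) = f_m(a)$ then yields $2\partial_b \fnum_m(a,a) = f_m'(a)$. Hence $2\partial_m g^{(i)}(x_i) = f_m'(\usol)\partial_m \usol(t,x_i) = \partial_m f_m(\usol)(t,x_i)$, which is exactly the term to be cancelled. The remaining defect is bounded by $2 C_D h^p \|g^{(i)}\|_{C^{p+1}(\Omega)}$.

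The only real work, and the main obstacle, is making the constant uniform in $i$, $t$, and $N$. Each $g^{(i)}$ is a different function, so I must bound $\|g^{(i)}\|_{C^{p+1}}$ uniformly. The first argument is frozen at a point of the range of $\usol$, which lies in $K$, so only derivatives in $x$ through the second argument appear. By the multivariate chain rule (Faà di Bruno), every derivative of order at most $p+1$ is a polynomial in derivatives of $\fnum_m$ of order at most $p+1$ in the second argument, evaluated on $K\times K$, and in derivatives of $\usol$ up to order $p+1$. This gives $\|g^{(i)}\|_{C^{p+1}} \le C(p,d,\|\usol\|_{C^{p+1}}, \|\fnum_m\|_{C^{p+1}(K\times K)})$, uniformly in $i$ and $t\in[0,T]$. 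Summing over the $d$ directions yields $\max_i |\tau_i| \le C h^p$ with the stated dependence.

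Finally, the $M$-norm bound follows from $\|\vec\tau\|_M^2 = \sum_i m_i |\tau_i|^2 \le (\max_i|\tau_i|)^2 \sum_i m_i$. The remark after Assumption~\ref{asm:sbp} gives $\sum_i m_i \le C_M C_N$, so $\|\vec\tau\|_M \le C (C_M C_N)^{1/2} h^p$.
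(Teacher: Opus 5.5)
Your proposal is correct and follows the paper's proof essentially step by step. Both use the auxiliary function $\fnum_m(\usol_i, \usol(t,\cdot))$ after \cite[Theorem~3.3]{chen2017entropy} (the paper puts the factor $2$ inside it), the identity $2\,\partial_b \fnum_m(a,a) = f_m'(a)$ from symmetry and consistency, the exact cancellation of the collocated source, and $\sum_i m_i \le C_M C_N$ for the $M$-norm bound; your Fa\`a di Bruno argument for the uniform $C^{p+1}$ bound just spells out what the paper states in one line via the chain rule.
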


\begin{proof}
  Fix $i$, $t$, and a direction $m$, and consider the auxiliary function
  \begin{equation}
    g^m_i(x) = 2 \fnum_m\bigl(\usol_i, \usol(t, x)\bigr),
  \end{equation}
  which is $C^{p+1}$ in $x$ with $\|g^m_i\|_{C^{p+1}(\Omega)}$ bounded uniformly in $i$ and $t$ by the chain rule, since $\fnum_m$ is $C^{p+1}$ near $K \times K$ and $\usol \in C^{p+1}$ with values in $K$.
  The nodal values of $g^m_i$ are $g^m_i(x_j) = 2 \fnum_m(\usol_i, \usol_j)$, so that, by the diagonal structure of $M$,
  \begin{equation}
    \frac{2}{m_i} \sum_j (Q_m)_{ij} \fnum_m(\usol_i, \usol_j)
    =
    \bigl( M^{-1} Q_m \, \vec{g^m_i} \bigr)_i
    =
    (D_m \vec{g^m_i})_i.
  \end{equation}
  Applying the accuracy estimate \eqref{eq:accuracy} componentwise yields
  \begin{equation}
    (D_m \vec{g^m_i})_i = (\partial_m g^m_i)(x_i) + R^m_i,
    \qquad
    \abs{R^m_i} \le C_D h^p \|g^m_i\|_{C^{p+1}(\Omega)} \le C h^p.
  \end{equation}
  It remains to identify $(\partial_m g^m_i)(x_i)$.
  By the chain rule,
  \begin{equation}
    \partial_m g^m_i(x) = 2 \, \partial_b \fnum_m\bigl(\usol_i, \usol(t, x)\bigr) \, \partial_m \usol(t, x),
  \end{equation}
  where $\partial_{a/b} \fnum_m(a, b)$ denotes again the derivative of $\fnum_m$ with respect to its first/second argument.
  Differentiating the consistency relation $\fnum_m(a, a) = f_m(a)$ gives $\partial_a \fnum_m(a, a) + \partial_b \fnum_m(a, a) = f_m'(a)$, and differentiating the symmetry relation $\fnum_m(a, b) = \fnum_m(b, a)$ gives $\partial_a \fnum_m(a, a) = \partial_b \fnum_m(a, a)$;
  hence
  \begin{equation}
  \label{eq:flux-derivative-consistency}
    2 \, \partial_b \fnum_m(a, a) = f_m'(a).
  \end{equation}
  Consequently, $(\partial_m g^m_i)(x_i) = f_m'(\usol_i) \, \partial_m \usol(t, x_i) = \partial_m f_m\bigl(\usol(t, x)\bigr)\big|_{x = x_i}$.
  Since $\usol$ solves the balance law \eqref{eq:conservation-law} classically,
  \begin{equation}
    \frac{\dif \usol_i}{\dif t} = \partial_t \usol(t, x_i) = - \sum_m \partial_m f_m\bigl(\usol(t, x)\bigr)\big|_{x = x_i} + s(t, x_i).
  \end{equation}
  Inserting this into \eqref{eq:truncation-definition}, the collocated source values cancel exactly and $\tau_i = \sum_m R^m_i$.
  The $M$-norm bound follows from $\sum_i m_i \le C_M C_N$, see Assumption~\ref{asm:sbp}~\ref{item:sbp-norm}.
\end{proof}

\begin{remark}[Smoothness of the fluxes]
\label{rem:flux-smoothness}
  The regularity $\fnum_m \in C^{p+1}$ required by Assumption~\ref{asm:ec-flux} is dictated by Lemma~\ref{lem:truncation};
  all other steps of the proof only use $C^2$ regularity through the constant $C_S$ in \eqref{eq:remainder-bound}.
\end{remark}

\subsection{Cancellation of the linear error terms}
\label{ssec:cancellation}

We now compute the evolution of the discrete relative entropy.
This is the core of the proof:
the entropy-conservation condition \eqref{eq:ec-condition} and its first derivatives force all terms that are linear in the error to cancel exactly, separately in each direction.

Recall that the pointwise error is $e_i = \unum_i - \usol_i$.
We abbreviate the flux Jacobians at the exact states by
\begin{equation}
  A^{m,a}_{ij} = \partial_a \fnum_m(\usol_i, \usol_j),
  \qquad
  A^{m,b}_{ij} = \partial_b \fnum_m(\usol_i, \usol_j).
\end{equation}

\begin{lemma}[Evolution of the relative entropy]
\label{lem:master}
  As long as the solution of \eqref{eq:semidiscretization} stays in $K$,
  \begin{equation}
  \label{eq:master}
  \begin{aligned}
    \frac{\dif}{\dif t} E_h
    ={}&
    \sum_{m} \sum_{i,j} (\wsol_i - \wsol_j)^T (Q_m)_{ij} \bigl( A^{m,a}_{ij} e_i + A^{m,b}_{ij} e_j \bigr)
    \\
    &
    + \sum_{m} 2 \sum_{i,j} (Q_m)_{ij} \, e_i^T U''(\usol_i) \fnum_m(\usol_i, \usol_j)
    \\
    &
    + \sum_{m} \sum_{i,j} (\wsol_i - \wsol_j)^T (Q_m)_{ij} \, r^m_{ij}
    - \sum_{i} m_i \, e_i^T U''(\usol_i) \tau_i
    + \mathrm{SRC},
  \end{aligned}
  \end{equation}
  with the source contribution
  \begin{equation}
  \label{eq:src-definition}
    \mathrm{SRC}
    =
    \sum_{i} m_i \bigl[ (\wnum_i - \wsol_i) - U''(\usol_i) e_i \bigr]^T s(t, x_i).
  \end{equation}
\end{lemma}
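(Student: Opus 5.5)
We differentiate $E_h$ directly and substitute the scheme. Since $\usol_i$ and $\wsol_i$ depend on time, we get
\begin{equation*}
  \frac{\dif}{\dif t} E_h
  =
  \sum_i m_i \Bigl( (\wnum_i)^T \dot{\unum_i} - (\wsol_i)^T \dot{\usol_i} - (U''(\usol_i)\dot{\usol_i})^T e_i - (\wsol_i)^T (\dot{\unum_i} - \dot{\usol_i}) \Bigr),
\end{equation*}
and the second and fourth terms partially cancel to leave
\begin{equation*}
  \frac{\dif}{\dif t} E_h
  =
  \sum_i m_i (\wnum_i - \wsol_i)^T \dot{\unum_i} - \sum_i m_i e_i^T U''(\usol_i) \dot{\usol_i}.
\end{equation*}
We then replace $m_i \dot{\unum_i}$ using \eqref{eq:semidiscretization}, and $m_i \dot{\usol_i}$ using the truncation identity \eqref{eq:truncation-definition}.

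The source pieces become $\sum_i m_i [(\wnum_i-\wsol_i) - U''(\usol_i)e_i]^T s(t,x_i)$, which is exactly $\mathrm{SRC}$. The $\tau_i$ piece gives $-\sum_i m_i e_i^T U''(\usol_i)\tau_i$. The exact flux piece gives
\begin{equation*}
  +\sum_m 2\sum_{i,j}(Q_m)_{ij}\, e_i^T U''(\usol_i)\fnum_m(\usol_i,\usol_j),
\end{equation*}
which is the second line of \eqref{eq:master}.

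The remaining term is
\begin{equation*}
  -\sum_m 2\sum_{i,j}(\wnum_i-\wsol_i)^T(Q_m)_{ij}\fnum_m(\unum_i,\unum_j).
\end{equation*}
We split it into $-2\sum (\wnum_i)^T Q \fnum(\unum_i,\unum_j)$ and $+2\sum (\wsol_i)^T Q \fnum(\unum_i,\unum_j)$. The first vanishes exactly by the argument of Lemma~\ref{lem:entropy-conservation}: antisymmetrize as in \eqref{eq:antisymmetrization}, apply \eqref{eq:ec-condition}, and use the zero row and column sums. For the second, the same antisymmetrization, which needs only skew-symmetry of $Q_m$ and symmetry of $\fnum_m$, turns it into
\begin{equation*}
  \sum_{i,j}(\wsol_i-\wsol_j)^T(Q_m)_{ij}\fnum_m(\unum_i,\unum_j).
\end{equation*}
Inserting the Taylor expansion \eqref{eq:flux-taylor} produces three contributions: the linear term with $A^{m,a}_{ij}e_i + A^{m,b}_{ij}e_j$, the remainder term with $r^m_{ij}$, and a zeroth-order term $\sum_{i,j}(\wsol_i-\wsol_j)^T(Q_m)_{ij}\fnum_m(\usol_i,\usol_j)$.

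The zeroth-order term must vanish. Applying \eqref{eq:ec-condition} with $a=\usol_j$, $b=\usol_i$ turns it into $\sum_{i,j}(Q_m)_{ij}(\psi_m(\usol_i)-\psi_m(\usol_j))$. This is zero by the row and column sum property of Assumption~\ref{asm:sbp}~\ref{item:sbp-constants}.

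The main obstacle is bookkeeping rather than any single hard step. Signs and factors of $2$ must match when the antisymmetrization is applied to a mixed product, where the weights $\wsol$ are evaluated at exact states but the flux is evaluated at numerical states. The check is that averaging over the index swap $i\leftrightarrow j$ is valid here, because only the weight vector enters the swap and the flux is symmetric in its arguments. One also needs every state to stay in $K$, so that the Taylor remainder $r^m_{ij}$ is defined via \eqref{eq:flux-taylor}, and $U\in C^2$ for the chain rule on $\wsol_i(t)$. Collecting all the terms reproduces \eqref{eq:master}.
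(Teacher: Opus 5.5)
Your proposal is correct and follows the paper's proof essentially step by step. It uses the same reduced formula for $\frac{\dif}{\dif t} E_h$, substitutes the scheme and the truncation identity \eqref{eq:truncation-definition}, and antisymmetrizes so that the $\wnum$-part vanishes as in Lemma~\ref{lem:entropy-conservation}; the Taylor expansion \eqref{eq:flux-taylor} then leaves a zeroth-order term that telescopes against the zero row and column sums of $Q_m$. The only difference is cosmetic: you split the $\wnum$- and $\wsol$-contributions before antisymmetrizing rather than after.
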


\begin{proof}
  Differentiating \eqref{eq:relative-entropy} in time and using $\frac{\dif}{\dif t} \wsol_i = U''(\usol_i) \frac{\dif \usol_i}{\dif t}$ gives
  \begin{equation}
  \label{eq:Eh-derivative}
    \frac{\dif}{\dif t} E_h
    =
    \sum_i m_i (\wnum_i - \wsol_i)^T \frac{\dif \unum_i}{\dif t}
    -
    \sum_i m_i \, e_i^T U''(\usol_i) \frac{\dif \usol_i}{\dif t},
  \end{equation}
  where the terms $\mp \wsol_i^T \frac{\dif \usol_i}{\dif t}$ canceled and we used the symmetry of $U''$.
  We treat the two sums separately.

  For the first sum, we insert the scheme \eqref{eq:semidiscretization};
  the source contributes $\sum_i m_i (\wnum_i - \wsol_i)^T s(t, x_i)$, and each directional flux term is antisymmetrized as in \eqref{eq:antisymmetrization},
  \begin{equation}
  \begin{aligned}
    &\quad
    - 2 \sum_{i,j} (\wnum_i - \wsol_i)^T (Q_m)_{ij} \fnum_m(\unum_i, \unum_j)
    \\&
    =
    - \sum_{i,j} \bigl( (\wnum_i - \wsol_i) - (\wnum_j - \wsol_j) \bigr)^T (Q_m)_{ij} \fnum_m(\unum_i, \unum_j).
  \end{aligned}
  \end{equation}
  The contribution of $\wnum_i - \wnum_j$ vanishes exactly as in the proof of Lemma~\ref{lem:entropy-conservation}.
  The remaining contribution enters with a positive sign and is expanded by the flux Taylor formula \eqref{eq:flux-taylor},
  \begin{equation}
  \begin{aligned}
    &\quad
    \sum_{i,j} (\wsol_i - \wsol_j)^T (Q_m)_{ij} \fnum_m(\unum_i, \unum_j)
    \\&
    =
    \sum_{i,j} (\wsol_i - \wsol_j)^T (Q_m)_{ij} \Bigl( \fnum_m(\usol_i, \usol_j) + A^{m,a}_{ij} e_i + A^{m,b}_{ij} e_j + r^m_{ij} \Bigr).
  \end{aligned}
  \end{equation}
  In the first term, the entropy-conservation condition \eqref{eq:ec-condition} evaluated at the exact states gives $(\wsol_i - \wsol_j)^T \fnum_m(\usol_i, \usol_j) = \psi_m(\usol_i) - \psi_m(\usol_j)$, which sums to zero against $Q_m$ as in Lemma~\ref{lem:entropy-conservation}.

  For the second sum in \eqref{eq:Eh-derivative}, we insert the definition \eqref{eq:truncation-definition} of the truncation error $\tau_i$,
  \begin{multline}
    - \sum_i m_i \, e_i^T U''(\usol_i) \frac{\dif \usol_i}{\dif t}
    \\
    =
    \sum_{m} 2 \sum_{i,j} (Q_m)_{ij} \, e_i^T U''(\usol_i) \fnum_m(\usol_i, \usol_j)
    -
    \sum_i m_i \, e_i^T U''(\usol_i) \bigl( s(t, x_i) + \tau_i \bigr).
  \end{multline}
  Collecting all terms and combining the two source contributions into \eqref{eq:src-definition} yields \eqref{eq:master}.
\end{proof}

The first two sums \eqref{eq:master} collect all flux terms that are linear in the error.
The following lemma shows that they add up to zero, separately in each direction.

\begin{lemma}[Cancellation]
\label{lem:cancellation}
  For each direction $m$, all error vectors $\vec{e}$, and all nodal states $\usol_i \in K$,
  \begin{equation}
  \label{eq:cancellation}
    \sum_{i,j} (\wsol_i - \wsol_j)^T (Q_m)_{ij} \bigl( A^{m,a}_{ij} e_i + A^{m,b}_{ij} e_j \bigr)
    + 2 \sum_{i,j} (Q_m)_{ij} \, e_i^T U''(\usol_i) \fnum_m(\usol_i, \usol_j)
    =
    0.
  \end{equation}
\end{lemma}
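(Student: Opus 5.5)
The plan is to differentiate the entropy-conservation identity \eqref{eq:ec-condition} with respect to each argument. This turns the terms $(\wsol_i - \wsol_j)^T A^{m,a}_{ij}$ and $(\wsol_i - \wsol_j)^T A^{m,b}_{ij}$ into expressions that involve only $U''$, the flux $f_m$ and the two-point flux $\fnum_m$. The resulting sums are then simplified with the zero row and column sums and the skew-symmetry of $Q_m$ (Assumption~\ref{asm:sbp}) together with the symmetry of $\fnum_m$. No estimates are needed; the result is an exact algebraic identity.

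\emph{Step 1: differentiate the entropy-conservation condition.} Since \eqref{eq:ec-condition} holds on the open set $\mathcal{A} \times \mathcal{A}$ and $\fnum_m \in C^1$, it can be differentiated. Write it with first argument $a$ and second argument $b$ as $(w(b) - w(a))^T \fnum_m(a,b) = \psi_m(b) - \psi_m(a)$. Differentiating with respect to $a$ and using \eqref{eq:potential-gradient} together with the symmetry of $U''$ gives, as row vectors,
\begin{equation*}
  (w(a) - w(b))^T \partial_a \fnum_m(a,b) = \bigl( U''(a) f_m(a) - U''(a) \fnum_m(a,b) \bigr)^T.
\end{equation*}
Differentiating with respect to $b$ gives
\begin{equation*}
  (w(a) - w(b))^T \partial_b \fnum_m(a,b) = \bigl( U''(b) \fnum_m(a,b) - U''(b) f_m(b) \bigr)^T.
\end{equation*}
Evaluating both identities at $a = \usol_i$, $b = \usol_j$ and writing $f^{\mathrm{num}}_{ij} = \fnum_m(\usol_i, \usol_j)$, we obtain
\begin{equation*}
  (\wsol_i - \wsol_j)^T A^{m,a}_{ij} e_i = e_i^T U''(\usol_i) \bigl( f_m(\usol_i) - f^{\mathrm{num}}_{ij} \bigr)
\end{equation*}
and
\begin{equation*}
  (\wsol_i - \wsol_j)^T A^{m,b}_{ij} e_j = e_j^T U''(\usol_j) \bigl( f^{\mathrm{num}}_{ij} - f_m(\usol_j) \bigr).
\end{equation*}

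\emph{Step 2: sum against $Q_m$.} Multiply both identities by $(Q_m)_{ij}$ and sum over $i$ and $j$.
\begin{itemize}
  \item The term with $f_m(\usol_i)$ depends on $j$ only through $(Q_m)_{ij}$. It therefore vanishes because the row sums of $Q_m$ are zero (Assumption~\ref{asm:sbp}~\ref{item:sbp-constants}).
  \item Likewise, the term with $f_m(\usol_j)$ vanishes because the column sums of $Q_m$ are zero.
  \item In the remaining term $\sum_{i,j} (Q_m)_{ij} e_j^T U''(\usol_j) f^{\mathrm{num}}_{ij}$, exchange $i \leftrightarrow j$. Using $(Q_m)_{ji} = -(Q_m)_{ij}$ and $\fnum_m(\usol_j, \usol_i) = \fnum_m(\usol_i, \usol_j)$, it becomes $-\sum_{i,j} (Q_m)_{ij} e_i^T U''(\usol_i) f^{\mathrm{num}}_{ij}$.
\end{itemize}
Hence the first sum in \eqref{eq:cancellation} equals $-2 \sum_{i,j} (Q_m)_{ij} e_i^T U''(\usol_i) \fnum_m(\usol_i, \usol_j)$. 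This exactly cancels the second sum in \eqref{eq:cancellation}, which proves the claim.

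The main obstacle is the sign and transpose bookkeeping in Step 1. Specifically, one must orient the EC condition correctly relative to the factor $\wsol_i - \wsol_j$ in \eqref{eq:cancellation}, and use the symmetry of $U''$ to transpose terms such as $\fnum_m^T U''$. I would check this step carefully against the scalar case, where $\partial_a \fnum_m$ and $\partial_b \fnum_m$ are scalars, before trusting the general signs.
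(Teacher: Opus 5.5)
Your proposal is correct and essentially follows the paper's proof. The paper also differentiates the entropy-conservation identity in each argument and telescopes the potential-gradient terms using the zero row and column sums; it keeps $\psi_m'$ rather than substituting $U'' f_m$, which changes nothing. It then cancels the rest by contracting the $i \leftrightarrow j$ symmetric array $\fnum_m(\usol_i,\usol_j)^T\bigl(U''(\usol_i)e_i + U''(\usol_j)e_j\bigr)$ against the skew-symmetric $Q_m$, which is your explicit index swap in another form; your sign and transpose bookkeeping in Step 1 is also correct.
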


\begin{proof}
  The function
  \begin{equation}
    G_m(a, b) = \bigl( w(b) - w(a) \bigr)^T \fnum_m(a, b) - \psi_m(b) + \psi_m(a)
  \end{equation}
  vanishes identically on $K \times K$ by the entropy-conservation condition \eqref{eq:ec-condition}.
  Hence, its partial derivatives vanish as well:
  \begin{equation}
  \label{eq:diff-ec}
  \begin{aligned}
    0 = \partial_a G_m(a, b)
    &=
    \bigl[ \partial_a \fnum_m(a, b) \bigr]^T \bigl( w(b) - w(a) \bigr)
    - U''(a) \fnum_m(a, b)
    + \psi_m'(a),
    \\
    0 = \partial_b G_m(a, b)
    &=
    \bigl[ \partial_b \fnum_m(a, b) \bigr]^T \bigl( w(b) - w(a) \bigr)
    + U''(b) \fnum_m(a, b)
    - \psi_m'(b).
  \end{aligned}
  \end{equation}
  We evaluate \eqref{eq:diff-ec} at $(a, b) = (\usol_i, \usol_j)$, so that $w(b) - w(a) = \wsol_j - \wsol_i$ and $\partial_{a/b} \fnum_m(a, b) = A^{m,a/b}_{ij}$.
  Thus, multiplication by $e_i$ and $e_j$ gives
  \begin{equation}
  \begin{aligned}
    (\wsol_i - \wsol_j)^T A^{m,a}_{ij} e_i
    &=
    - \fnum_m(\usol_i, \usol_j)^T U''(\usol_i) e_i + \psi_m'(\usol_i)^T e_i,
    \\
    (\wsol_i - \wsol_j)^T A^{m,b}_{ij} e_j
    &=
    \fnum_m(\usol_i, \usol_j)^T U''(\usol_j) e_j - \psi_m'(\usol_j)^T e_j.
  \end{aligned}
  \end{equation}
  The contributions of $\psi_m'$ telescope to zero against $Q_m$, since all row and column sums of $Q_m$ vanish.
  Adding the term $2 \sum_{i,j} (Q_m)_{ij} \, e_i^T U''(\usol_i) \fnum_m(\usol_i, \usol_j)$, the left-hand side of \eqref{eq:cancellation} becomes
  \begin{equation}
    \sum_{i,j} (Q_m)_{ij} X^m_{ij},
    \qquad
    X^m_{ij} = \fnum_m(\usol_i, \usol_j)^T \bigl( U''(\usol_i) e_i + U''(\usol_j) e_j \bigr).
  \end{equation}
  The summand $X^m_{ij}$ is symmetric under the exchange $i \leftrightarrow j$, because $\fnum_m$ is symmetric and the bracket is invariant.
  Contracting a symmetric array with the skew-symmetric $Q_m$ gives zero.
\end{proof}

Combining Lemma~\ref{lem:master} and Lemma~\ref{lem:cancellation}, we arrive at the identity
\begin{equation}
\label{eq:sharp}
  \frac{\dif}{\dif t} E_h
  =
  \sum_{m} \sum_{i,j} (\wsol_i - \wsol_j)^T (Q_m)_{ij} \, r^m_{ij}
  -
  \sum_{i} m_i \, e_i^T U''(\usol_i) \tau_i
  + \mathrm{SRC},
\end{equation}
in which only the quadratic flux remainders $r^m_{ij}$, the truncation errors $\tau_i$, and the source contribution appear.

\subsection{Bounding the source term}
\label{ssec:source-bound}

The source contribution \eqref{eq:src-definition} is the only genuinely new term compared to the source-free case, and the following estimate is the reason no entropy structure of the source is needed.

\begin{lemma}[Source bound]
\label{lem:source-bound}
  If $\unum_i \in K$ for all $i$, then
  \begin{equation}
    \abs{\mathrm{SRC}}
    \le
    \frac{C_w}{c_U} \, \norm{s}_{L^\infty([0,T] \times \Omega)} \, E_h.
  \end{equation}
\end{lemma}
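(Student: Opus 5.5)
The bound follows from a pointwise second-order Taylor estimate for the entropy variables, followed by the coercivity of Lemma~\ref{lem:coercivity}. The quantity in brackets in \eqref{eq:src-definition} is exactly the remainder of the first-order Taylor expansion of $w = U'$ around $\usol_i$, evaluated at $\unum_i = \usol_i + e_i$. If $s \equiv 0$, then $\mathrm{SRC} = 0$ and there is nothing to prove, consistent with the convention $C_w = 0$. Otherwise, Assumption~\ref{asm:smooth-solution} gives $U \in C^3(K)$.

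First, I will write the integral form of the remainder. Since $K$ is convex (Remark~\ref{rem:convexity-K}), the segment $\usol_i + \sigma e_i$, $\sigma \in [0,1]$, lies in $K$. Hence
\begin{equation*}
  \wnum_i - \wsol_i - U''(\usol_i) e_i
  = \int_0^1 \bigl( U''(\usol_i + \sigma e_i) - U''(\usol_i) \bigr) e_i \dif \sigma
  = \int_0^1 (1-\sigma)\, U'''(\usol_i + \sigma e_i)[e_i, e_i] \dif \sigma .
\end{equation*}
With $C_w = \sup_K \abs{U'''}$, understood as the operator norm of the trilinear form, this gives the pointwise bound $\abs{\wnum_i - \wsol_i - U''(\usol_i) e_i} \le \frac{C_w}{2} \abs{e_i}^2$.

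Second, I apply Cauchy--Schwarz in $\R^n$ and bound $\abs{s(t,x_i)} \le \norm{s}_{L^\infty}$. Multiplying by $m_i$ and summing over $i$ yields
\begin{equation*}
  \abs{\mathrm{SRC}} \le \frac{C_w}{2} \norm{s}_{L^\infty} \|\vec{e}\|_M^2 .
\end{equation*}

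Third, I use the lower bound in Lemma~\ref{lem:coercivity}, namely $\|\vec{e}\|_M^2 \le \frac{2}{c_U} E_h$. Inserting it into the previous estimate gives exactly $\frac{C_w}{c_U} \norm{s}_{L^\infty} E_h$.

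The only point needing care is the constant in the first step. Whether the factor $\tfrac12$ appears depends on how $C_w$ is normalized, i.e., on using the weighted remainder $\int_0^1 (1-\sigma)\dif\sigma = \tfrac12$ rather than the cruder mean-value bound. With the integral form above, the stated constant comes out exactly. If one instead bounded $\abs{U''(\usol_i + \sigma e_i) - U''(\usol_i)} \le C_w \sigma \abs{e_i}$, the factor $\int_0^1 \sigma \dif\sigma = \tfrac12$ would give the same result. There is no real obstacle beyond checking that all evaluation points stay in $K$, which convexity guarantees.
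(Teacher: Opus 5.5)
Your proposal is correct and follows essentially the same route as the paper's proof. The bracket in $\mathrm{SRC}$ is the second-order Taylor remainder of $w = U'$ along the segment $\usol_i + \sigma e_i \in K$, which is bounded by $\frac{C_w}{2}|e_i|^2$; weighting by $m_i |s(t,x_i)|$, summing, and using the lower coercivity bound $\|\vec{e}\|_M^2 \le \frac{2}{c_U} E_h$ then gives exactly the stated constant. Your explicit $(1-\sigma)$-weighted integral form simply writes out the step the paper states in one line.
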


\begin{proof}
  If $s \equiv 0$, then $\mathrm{SRC} = 0$ and the bound holds trivially with $C_w = 0$.
  Otherwise, Assumption~\ref{asm:smooth-solution} provides $U \in C^3(K)$, so that Taylor expansion of $w = U'$ with integral remainder along the segment $\usol_i + \sigma e_i \in K$ gives
  \begin{equation}
    \abs[1]{w(\unum_i) - w(\usol_i) - U''(\usol_i) e_i}
    \le
    \frac{C_w}{2} |e_i|^2.
  \end{equation}
  Multiplying by $\abs{s(t, x_i)}$, summing with the weights $m_i$, and applying the coercivity \eqref{eq:coercivity} yields the claim.
\end{proof}

Each of the two terms constituting $\mathrm{SRC}$ in the proof of Lemma~\ref{lem:master} is only linear in the error, with coefficients of order $\norm{s}_{L^\infty}$;
estimating them separately would yield $\frac{\dif}{\dif t} E_h \le C E_h + C' E_h^{1/2}$ and destroy the convergence rate.
It is their difference that is quadratic in the error, because the same source values enter the numerical and the exact dynamics.
This is where the exact collocation of a state-independent source in Assumption~\ref{asm:source} is used.
For quadratic entropies, $U''' = 0$ and $\mathrm{SRC} = 0$ exactly.

\subsection{Bounding the quadratic remainder}
\label{ssec:remainder}

The first sum in \eqref{eq:sharp} is controlled by the structural bound \eqref{eq:remainder-hypothesis} of Assumption~\ref{asm:remainder}.
We now prove that the fixed-local-stencil condition of Assumption~\ref{asm:bandwidth} implies this bound.

\begin{lemma}[Fixed local stencil implies the structural bound]
\label{lem:bandwidth}
  Let Assumptions~\ref{asm:smooth-solution}, \ref{asm:sbp}, \ref{asm:ec-flux}, and \ref{asm:bandwidth} be satisfied.
  Then, Assumption~\ref{asm:remainder} holds with
  \begin{equation}
    C_R
    =
    \frac{2}{c_U} \, \frac{d \, C_S C_L \omega \kappa C_q}{c_M},
    \qquad
    C_L = \sup_{t \in [0, T]} \norm[1]{\nabla_x ( w \circ \usol )(t, \cdot)}_{L^\infty(\Omega)}.
  \end{equation}
\end{lemma}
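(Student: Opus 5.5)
The plan is to bound the left-hand side of \eqref{eq:remainder-hypothesis} term by term, using three ingredients: the smallness of $|\wsol_i - \wsol_j|$ for coupled nodes, the entry bound on $Q_m$, and the quadratic bound \eqref{eq:remainder-bound} on $r^m_{ij}$. The result is then converted into $E_h$ via the coercivity of Lemma~\ref{lem:coercivity}. Since Assumption~\ref{asm:remainder} is only required when $\unum_i \in K$ for all $i$, the Taylor bound \eqref{eq:remainder-bound} is available throughout.

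\textbf{Step 1 (Lipschitz bound).} Whenever $(Q_m)_{ij} \ne 0$, locality \ref{item:bandwidth-stencil} gives $\distper(x_i, x_j) \le \omega h$. Choose a shortest geodesic on the torus between $x_i$ and $x_j$ and apply the mean value theorem to $x \mapsto w(\usol(t,x))$ along it. This yields
\begin{equation*}
  |\wsol_i - \wsol_j| \le C_L \, \distper(x_i, x_j) \le C_L \omega h .
\end{equation*}
Here $C_L$ is finite because $\usol \in C^{p+1}$ with $p \ge 1$, $w = U'$ is $C^1$, and the range lies in $K$. For duplicated nodes the bound is trivially zero.

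\textbf{Step 2 (termwise estimate).} Applying Cauchy--Schwarz in each summand, together with Step~1, \ref{item:bandwidth-entry}, and \eqref{eq:remainder-bound}, gives for each direction $m$
\begin{equation*}
  \Bigl| \sum_{i,j} (\wsol_i - \wsol_j)^T (Q_m)_{ij} r^m_{ij} \Bigr|
  \le C_L \omega h \, \frac{C_S}{2} \sum_{i,j} |(Q_m)_{ij}| \bigl( |e_i|^2 + |e_j|^2 \bigr).
\end{equation*}
The double sum splits into two parts. The $|e_i|^2$ part is controlled by the row sums, which are at most $\kappa C_q h^{d-1}$ by Remark~\ref{rem:stencil-condition}. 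The $|e_j|^2$ part is controlled by the column sums, which satisfy the same bound by skew-symmetry. Hence the sum is at most $2 \kappa C_q h^{d-1} \sum_i |e_i|^2$.

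\textbf{Step 3 (convert to the $M$-norm).} From $m_i \ge c_M h^d$ in Assumption~\ref{asm:sbp}~\ref{item:sbp-norm} we get $\sum_i |e_i|^2 \le (c_M h^d)^{-1} \|\vec{e}\|_M^2$. The powers of $h$ then cancel exactly, since $h \cdot h^{d-1} \cdot h^{-d} = 1$. Each direction therefore contributes at most $C_S C_L \omega \kappa C_q \|\vec{e}\|_M^2 / c_M$. Summing over the $d$ directions and applying Lemma~\ref{lem:coercivity} in the form $\|\vec{e}\|_M^2 \le (2/c_U) E_h$ gives exactly the stated $C_R$.

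The only delicate point is Step~1. The Lipschitz bound must use the periodic distance, which is handled by working on the torus, and it is where locality is genuinely needed: without it, $|\wsol_i - \wsol_j|$ would only be $\O(1)$ and the factor $h$ that balances $h^{d-1}/h^d$ would be lost. Everything else is bookkeeping of constants.
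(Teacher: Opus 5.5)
Your proposal is correct and follows the paper's proof essentially step for step. Both use the stencilwise Lipschitz bound $|\wsol_i - \wsol_j| \le C_L \omega h$, the row and column sum bound $\kappa C_q h^{d-1}$ from Remark~\ref{rem:stencil-condition} for the two halves of the quadratic remainder, and the mass bound $m_i \ge c_M h^d$ with exact cancellation of the powers of $h$, followed by coercivity, which gives exactly the stated $C_R$.
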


\begin{proof}
  The map $x \mapsto w\bigl(\usol(t, x)\bigr)$ is continuously differentiable and periodic, so that
  \begin{equation}
    |\wsol_i - \wsol_j|
    \le
    C_L \distper(x_i, x_j).
  \end{equation}
  Fix a direction $m$.
  On the stencil of Assumption~\ref{asm:bandwidth}~\ref{item:bandwidth-stencil}, i.e., for $(Q_m)_{ij} \ne 0$, this yields $|\wsol_i - \wsol_j| \le C_L \omega h$.
  Together with the remainder bound \eqref{eq:remainder-bound},
  \begin{equation}
    \left| \sum_{i,j} (\wsol_i - \wsol_j)^T (Q_m)_{ij} \, r^m_{ij} \right|
    \le
    C_L \omega h \, \frac{C_S}{2} \sum_{i,j} |(Q_m)_{ij}| \bigl( |e_i|^2 + |e_j|^2 \bigr).
  \end{equation}
  By Remark~\ref{rem:stencil-condition}, the row sums of $Q_m$ are bounded by $\kappa C_q h^{d-1}$, so
  \begin{equation}
    \sum_{i,j} |(Q_m)_{ij}| |e_i|^2 \le \kappa C_q h^{d-1} \sum_i |e_i|^2.
  \end{equation}
  By skew-symmetry, $|(Q_m)_{ij}| = |(Q_m)_{ji}|$, the same bound holds for the column sums and thus for the second contribution.
  Finally, the quasi-uniformity $m_i \ge c_M h^d$ of Assumption~\ref{asm:sbp}~\ref{item:sbp-norm} and the coercivity \eqref{eq:coercivity} give
  \begin{equation}
    \sum_i |e_i|^2
    \le
    \frac{1}{c_M h^d} \|\vec{e}\|_M^2
    \le
    \frac{2}{c_U \, c_M h^d} E_h.
  \end{equation}
  Combining the three estimates, the powers of $h$ cancel exactly, $h \cdot h^{d-1} \cdot h^{-d} = 1$;
  summing over the $d$ directions yields the claim.
\end{proof}

\subsection{Gronwall estimate and bootstrap}
\label{ssec:gronwall}

We can now prove Theorem~\ref{thm:convergence}.

\begin{proof}[Proof of Theorem~\ref{thm:convergence}]
  Let $\delta > 0$ be such that the closed $\delta$-neighborhood of the range of $\usol$ is contained in $K$;
  such a $\delta$ exists by Assumption~\ref{asm:smooth-solution}.
  Since the right-hand side of \eqref{eq:semidiscretization} is continuously differentiable in a neighborhood of $K^N$ and continuous in time, the Picard-Lindel{\"o}f theorem yields a unique local solution $\unum$, which can be continued as long as the states remain in $K^N$.
  Define
  \begin{equation}
    T_h
    =
    \sup \bigl\{ t \in [0, T] \bigm| \unum \text{ exists on } [0, t] \text{ and } \max_i \, \abs{e_i(\sigma)} \le \delta \text{ for all } \sigma \in [0, t] \bigr\},
  \end{equation}
  which is positive for small $h$:
  indeed, the coercivity \eqref{eq:coercivity} and the quasi-uniformity of Assumption~\ref{asm:sbp}~\ref{item:sbp-norm} give
  \begin{equation}
    \max_i \abs{e_i(0)} \le (c_M h^d)^{-1/2} \sqrt{2/c_U} \, E_h(0)^{1/2} \le C h^{p - d/2} < \delta
  \end{equation}
  for small $h$ by Assumptions~\ref{asm:initial-data} and \ref{asm:order}, and the constraint persists on a short time interval by continuity.
  On $[0, T_h)$, all states satisfy $\unum_i(t) \in K$, so the results of the preceding subsections apply.

  \emph{Gronwall estimate.}
  Combining the identity \eqref{eq:sharp}, Assumption~\ref{asm:remainder} (or Lemma~\ref{lem:bandwidth}), Lemma~\ref{lem:source-bound}, the Hessian bound \eqref{eq:hessian-bounds}, the Cauchy-Schwarz inequality, Lemma~\ref{lem:truncation}, and the coercivity \eqref{eq:coercivity} yields, on $[0, T_h)$,
  \begin{equation}
  \label{eq:gronwall-differential}
    \frac{\dif}{\dif t} E_h
    \le
    (C_R + C_{\mathrm{src}}) E_h + C_U \|\vec{e}\|_M \norm{\vec{\tau}}_M
    \le
    (C_R + C_{\mathrm{src}}) E_h + C_1 h^p E_h^{1/2},
  \end{equation}
  with $C_{\mathrm{src}} = (C_w / c_U) \norm{s}_{L^\infty}$ and $C_1 = C_U C (C_M C_N)^{1/2} \sqrt{2 / c_U}$.
  We set $C_2 = \max\{1, C_R + C_{\mathrm{src}}\}$;
  since $C_2 \ge C_R + C_{\mathrm{src}}$, the inequality \eqref{eq:gronwall-differential} remains valid with $C_2$ in place of $C_R + C_{\mathrm{src}}$, and $C_2 \ge 1$ avoids a division by zero in \eqref{eq:gronwall-solved} below in the borderline case of vanishing Gronwall constants.
  To avoid differentiating $E_h^{1/2}$ at zeros of $E_h$, let $\epsilon > 0$ and set $\phi_\epsilon = (E_h + \epsilon)^{1/2}$.
  Then, \eqref{eq:gronwall-differential} implies
  \begin{equation}
    \frac{\dif}{\dif t} \phi_\epsilon
    =
    \frac{1}{2 \phi_\epsilon} \frac{\dif}{\dif t} E_h
    \le
    \frac{C_2}{2} \phi_\epsilon + \frac{C_1}{2} h^p,
    \quad \text{i.e.,} \quad
    \frac{\dif}{\dif t} \bigl( \e^{-C_2 t / 2} \phi_\epsilon \bigr)
    \le
    \frac{C_1}{2} h^p \, \e^{-C_2 t / 2}.
  \end{equation}
  Integrating the right-hand inequality --- the integrating-factor form of the differential Gronwall inequality \cite[Appendix~B.2]{evans2010partial} --- over $[0, t]$, multiplying by $\e^{C_2 t / 2}$, and letting $\epsilon \to 0$ gives, for $t \in [0, T_h)$,
  \begin{equation}
  \label{eq:gronwall-solved}
    E_h(t)^{1/2}
    \le
    \e^{C_2 t / 2} E_h(0)^{1/2}
    + \frac{C_1}{C_2} \bigl( \e^{C_2 t / 2} - 1 \bigr) h^p
    \le
    C_G h^p,
  \end{equation}
  with $C_G = \e^{C_2 T / 2} \bigl( C_0 + C_1 / C_2 \bigr)$ since $E_h(0)^{1/2} \le C_0 h^p$ by Assumption~\ref{asm:initial-data}.
  By the coercivity \eqref{eq:coercivity}, this proves the error estimate \eqref{eq:main-estimate} with $C_T = \sqrt{2 / c_U} \, C_G$ on $[0, T_h)$.

  \emph{Bootstrap.}
  Together with the error estimate \eqref{eq:main-estimate}, quasi-uniformity of Assumption~\ref{asm:sbp}~\ref{item:sbp-norm} yields the inverse estimate
  \begin{equation}
  \label{eq:inverse-estimate}
    \max_i \, \abs{e_i(t)}
    \le
    (c_M h^d)^{-1/2} \norm{\vec{e}(t)}_M
    \le
    c_M^{-1/2} C_T \, h^{p - d/2},
    \qquad t \in [0, T_h).
  \end{equation}
  Since $p > d/2$ by Assumption~\ref{asm:order}, we may choose $h_0 > 0$ such that
  \begin{equation}
    c_M^{-1/2} C_T h_0^{p - d/2} \le \delta / 2.
  \end{equation}
  For $h < h_0$, the errors satisfy $\max_i \abs{e_i(t)} \le \delta / 2$ on $[0, T_h)$, so all states stay in the compact $\delta/2$-neighborhood of the range of $\usol$, which lies in the interior of $K^N$ with a positive margin.
  Hence, the solution cannot cease to exist at $T_h$, and by continuity the defining constraint $\max_i |e_i| \le \delta$ of $T_h$ is not active at $t = T_h$ either.
  If $T_h < T$, both properties would allow the solution to be continued beyond $T_h$ within the constraint, contradicting the maximality of $T_h$.
  Therefore, $T_h = T$, and the error estimate \eqref{eq:main-estimate} holds on all of $[0, T]$.
\end{proof}

\subsection{On the order condition}
\label{ssec:order-condition}

The condition $p > d/2$ of Assumption~\ref{asm:order} deserves a closer look, since one does not observe any need for increased orders of accuracy in several space dimensions in practice.
Its role in the proof is confined to a single step:
the inverse estimate \eqref{eq:inverse-estimate} converts the $M$-norm bound into the $L^\infty$ control that keeps the states in $K$.
Nothing else in the proof uses it, and for $d \in \{2, 3\}$ it excludes only first-order methods.
The following two propositions substantiate that the condition is an artifact of the localization to $K$ rather than of the entropy estimate.

\begin{proposition}[Globally bounded data]
\label{prop:global-bounds}
  Let the assumptions of Theorem~\ref{thm:convergence} hold with $\mathcal{A} = \R^n$ and with the following global versions:
  the fluxes $\fnum_m$ are smooth on $\R^n \times \R^n$, the bounds \eqref{eq:hessian-bounds} and \eqref{eq:remainder-bound} and, if a source is present, the constant $C_w$ hold globally, and the structural bound \eqref{eq:remainder-hypothesis} of Assumption~\ref{asm:remainder} holds for all $\unum_i \in \R^n$ rather than only for $\unum_i \in K$.
  Then, the conclusion of Theorem~\ref{thm:convergence} holds for every $p \ge 1$ in every dimension, without Assumption~\ref{asm:order}.
\end{proposition}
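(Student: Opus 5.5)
The plan is to rerun the proof of Theorem~\ref{thm:convergence} with $K$ replaced by $\R^n$ throughout, and to observe that the bootstrap step, which is the only place Assumption~\ref{asm:order} enters, becomes unnecessary. Under the global hypotheses, the right-hand side of \eqref{eq:semidiscretization} is smooth on $(\R^n)^N$ and continuous in time. Picard--Lindel\"of therefore gives a unique maximal local solution, which can only cease to exist through blow-up of $|\vnum(t)|$ in finite time. There is no need to introduce $\delta$ or the constrained time $T_h$, because every lemma now holds for arbitrary states. In particular, Lemma~\ref{lem:coercivity} holds with the global Hessian bounds: the Taylor segment $\usol_i+\sigma e_i$ lies in $\R^n$, which is convex. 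Lemma~\ref{lem:entropy-conservation}, Lemma~\ref{lem:master} and Lemma~\ref{lem:cancellation} are algebraic identities valid wherever the fluxes are defined. The flux expansion \eqref{eq:flux-taylor} with bound \eqref{eq:remainder-bound} holds globally by hypothesis. Lemma~\ref{lem:source-bound} holds with the global $C_w$, and Assumption~\ref{asm:remainder} is assumed for all $\unum_i\in\R^n$. Lemma~\ref{lem:truncation} involves only the exact solution, whose range lies in the original compact set, so it is unchanged.

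Next, on the maximal existence interval $[0,T^*)\cap[0,T]$, I will combine these results into the identity \eqref{eq:sharp} and obtain the differential inequality \eqref{eq:gronwall-differential} with the same constants. The $\phi_\epsilon=(E_h+\epsilon)^{1/2}$ Gronwall argument then yields \eqref{eq:gronwall-solved}, namely $E_h(t)^{1/2}\le C_G h^p$. This step uses only Assumption~\ref{asm:initial-data}; the condition $E_h(0)^{1/2}\le C_0h^p$ is all that is needed, and the requirement $\unum_i(0)\in K$ becomes vacuous. Global coercivity then gives $\|\vec e(t)\|_M\le C_T h^p$, with no restriction on $h$ other than whatever is implicit in the data.

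The remaining step, and the only real obstacle, is to rule out blow-up before $T$ without an $L^\infty$ inverse estimate. I will use the crude bound $\max_i|e_i|\le (c_M h^d)^{-1/2}\|\vec e\|_M\le c_M^{-1/2}C_T h^{p-d/2}$. For a fixed $h$ this is a finite, $t$-independent bound on $[0,T^*)\cap[0,T]$, even when $p\le d/2$ makes it large. Consequently $|\unum_i(t)|\le \sup|\usol|+c_M^{-1/2}C_Th^{p-d/2}$ stays bounded, so the solution cannot blow up, and the standard continuation argument gives $T^*>T$. The point to stress is that $h$-uniformity is never needed for this $L^\infty$ bound; it only ensures continuation. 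Hence the estimate \eqref{eq:main-estimate} holds on $[0,T]$ for every $h>0$ (one may take $h_0$ arbitrary), every $p\ge1$ and every $d$. Uniqueness follows from the local Lipschitz property of the smooth right-hand side.
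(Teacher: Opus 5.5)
Your proposal is correct and matches the paper's proof: you run every estimate of Section~\ref{sec:proof} on the maximal existence interval with the global constants, and you use the Gronwall bound on $\|\vec{e}\|_M$ together with the crude inverse estimate $\max_i |\unum_i| \le \max_i |\usol_i| + (c_M h^d)^{-1/2} \|\vec{e}\|_M$ to bound the solution for each fixed $h$, which excludes blow-up before $T$ for an ODE with locally Lipschitz right-hand side. Your remark that no smallness of $h$ is needed, so $h_0$ may be taken arbitrary, is consistent with that argument.
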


The global form of \eqref{eq:remainder-hypothesis} is not automatic;
it is again implied by the fixed local stencil of Assumption~\ref{asm:bandwidth}, since the proof of Lemma~\ref{lem:bandwidth} uses the states only through the global versions of \eqref{eq:remainder-bound} and \eqref{eq:coercivity} and therefore returns the same constant $C_R$ without localization.
The Lipschitz constant $C_L$ of that lemma needs no global version:
it involves only the smooth solution, whose range is compact, and is therefore finite as soon as $w = U'$ is continuously differentiable.

\begin{proof}
  All estimates of this section hold on the maximal existence interval without any localization, since the constants are global and, by hypothesis, so is the structural bound \eqref{eq:remainder-hypothesis}.
  It remains to exclude finite-time blow-up for fixed $h$:
  the Gronwall estimate \eqref{eq:gronwall-solved} bounds $\|\vec{e}\|_M$, hence $\max_i |\unum_i| \le \max_i |\usol_i| + (c_M h^d)^{-1/2} \|\vec{e}\|_M$ is bounded on any bounded existence interval for fixed $h$, and solutions of ODEs with locally Lipschitz right-hand side can be continued while they remain bounded.
  Thus, the solution exists on $[0, T]$ and the estimate applies there.
\end{proof}

The model case for Proposition~\ref{prop:global-bounds} is a quadratic entropy with globally Lipschitz flux derivatives.
This covers the class treated in \cite{worku2026convergence}, symmetric hyperbolic systems with the quadratic entropy and fluxes with globally bounded second derivatives:
the latter give the global form of \eqref{eq:remainder-bound} for the flux \eqref{eq:symmetric-flux}, and $U'' = \I$ supplies the remaining global constants.
On that class, Theorem~\ref{thm:convergence} therefore holds for every $p \ge 1$ in every space dimension and does not use the homogeneity of the fluxes (Remark~\ref{rem:no-homogeneity}), whereas the convergence proof of \cite{worku2026convergence} requires $p > 1 + d/2$.

Of the two properties a quadratic entropy provides --- the global Hessian bounds \eqref{eq:hessian-bounds} with $c_U = C_U = 1$ and a vanishing third-derivative constant $C_w$ --- only the first is essential;
the second matters only in the presence of a source, since the source contribution \eqref{eq:src-definition} vanishes identically for $s \equiv 0$.
For source-free problems, Proposition~\ref{prop:global-bounds} thus covers every entropy that is uniformly convex with bounded Hessian on all of $\R^n$, which need not be quadratic:
for instance, $U'' = 1 + \frac{1}{2} \sin(u^2)$ stays in $[\frac{1}{2}, \frac{3}{2}]$ while $U'''$ is unbounded.
We emphasize that the proposition does not apply to the shallow water or compressible Euler equations, whose admissible sets are proper subsets of $\R^n$ and for which the localization away from vacuum is essential.

\begin{proposition}[Conditional a~posteriori version]
\label{prop:aposteriori}
  Let the assumptions of Theorem~\ref{thm:convergence} except Assumption~\ref{asm:order} hold.
  If, for some $h$, the solution of \eqref{eq:semidiscretization} exists on $[0, T]$ and all states $\unum_i(t)$ remain in $K$, then the error estimate \eqref{eq:main-estimate} holds for this solution.
\end{proposition}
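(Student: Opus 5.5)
The plan is to rerun the Gronwall part of the proof of Theorem~\ref{thm:convergence} verbatim on the whole interval $[0, T]$, now using the hypothesis of the proposition in place of the bootstrap. The key observation is that Assumption~\ref{asm:order} entered only through the inverse estimate \eqref{eq:inverse-estimate}. That estimate was needed only to guarantee that the states stay in $K$ and to bound $\max_i |e_i(0)|$ so that $T_h > 0$. Both purposes are now supplied directly: by hypothesis, the solution exists on $[0, T]$ and $\unum_i(t) \in K$ for all $i$ and all $t \in [0, T]$.

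First, every lemma of this section is valid on all of $[0, T]$. Coercivity (Lemma~\ref{lem:coercivity}) needs only $\unum_i \in K$. The evolution identity (Lemma~\ref{lem:master}) and the cancellation (Lemma~\ref{lem:cancellation}) need only states in $K$, which are required for the flux Taylor expansion \eqref{eq:flux-taylor} with the bound \eqref{eq:remainder-bound} via convexity of $K$. Together they give the identity \eqref{eq:sharp} on $[0, T]$. The truncation bound (Lemma~\ref{lem:truncation}) involves only the exact solution. The source bound (Lemma~\ref{lem:source-bound}) and the structural bound of Assumption~\ref{asm:remainder} hold whenever $\unum_i \in K$.

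Second, exactly as in \eqref{eq:gronwall-differential}, I obtain $\frac{\dif}{\dif t} E_h \le C_2 E_h + C_1 h^p E_h^{1/2}$ on $[0, T]$, where $E_h$ is $C^1$ because $\vnum$ is a $C^1$ solution there. I then apply the regularized square root $\phi_\epsilon = (E_h+\epsilon)^{1/2}$ and the integrating factor $\e^{-C_2 t/2}$. Letting $\epsilon \to 0$ yields \eqref{eq:gronwall-solved} with $E_h(t)^{1/2} \le C_G h^p$, and coercivity converts this into \eqref{eq:main-estimate} with the same constant $C_T = \sqrt{2/c_U}\, C_G$.

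I expect no serious obstacle. The only point to check is that the constants $C_1$, $C_2$, $C_G$ never depended on Assumption~\ref{asm:order}. They depend on $c_U$, $C_U$, $C_R$, $C_w$, $\norm{s}_{L^\infty}$, $C_M$, $C_N$, and the truncation constant, but not on the relation between $p$ and $d$. So the estimate holds with an $h$-independent $C_T$ for that particular $h$. Note that, unlike the theorem, no smallness of $h$ is required, since the bootstrap has been replaced by the hypothesis.
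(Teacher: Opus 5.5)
Your proposal is correct and matches the paper's proof, which likewise notes that Assumption~\ref{asm:order} enters only through the bootstrap guaranteeing $\unum_i(t) \in K$. With this containment given as a hypothesis, the Gronwall estimate \eqref{eq:gronwall-solved} applies directly on all of $[0, T]$. Your additional checks — that each lemma needs only $\unum_i \in K$, and that the constants depend neither on $h$ nor on the relation between $p$ and $d$ — make explicit what the paper's two-line proof leaves implicit.
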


\begin{proof}
  The Gronwall estimate in the proof of Theorem~\ref{thm:convergence} uses Assumption~\ref{asm:order} only through the bootstrap that guarantees $\unum_i(t) \in K$;
  under the stated hypothesis, this containment is given, and \eqref{eq:gronwall-solved} applies on $[0, T]$.
\end{proof}

Proposition~\ref{prop:aposteriori} certifies the $M$-norm error estimate for computed solutions that are verified to remain in a fixed compact convex set $K$, including first-order methods in several space dimensions.
For a mesh sequence, this yields convergence provided the checked set and its margin are uniform in $h$ and $K$ contains both the exact range and the numerical states.
For the shallow water or compressible Euler equations, verifying positivity alone is not sufficient;
the check must provide a uniform positive margin and bounded states, so that the constants entering the estimate remain controlled.
The proposition complements the a~priori theorem;
it neither proves that all sufficiently fine computations stay admissible, nor does it provide an $L^\infty$ convergence rate when $p \le d/2$.
Conditional a~priori estimates of this flavor, in which an admissibility or boundedness assumption on the computed solution replaces an unconditional bound, have recently also been developed for finite volume and Runge-Kutta discontinuous Galerkin methods with abstract limiting \cite{leotta2025conditional}.

The hypothesis of Proposition~\ref{prop:aposteriori}, that the numerical solution remains in a fixed compact set of admissible states, is precisely the standing assumption of the relative-entropy error analysis of Jovanovi{\'c} and Rohde \cite{jovanovic2006error}, who require the finite volume approximation to be uniformly bounded in $L^\infty$ --- obtained from a discrete maximum principle for monotone or weakly coupled schemes, and assumed otherwise.
Under this assumption their analysis is unconditional in the space dimension and covers first-order schemes with $p = 1$ for $d \ge 2$;
the order condition $p > d/2$ does not appear because the $L^\infty$ bound is assumed rather than derived from the $M$-norm estimate through the inverse estimate \eqref{eq:inverse-estimate}.
Both analyses share the Dafermos relative-entropy mechanism but differ in the discretization framework:
\cite{jovanovic2006error} treat first-order, \emph{dissipative} finite volume schemes satisfying a discrete entropy \emph{inequality}, for which the numerical viscosity supplies the entropy dissipation bound entering their estimate but limits the rate to $\O(\sqrt{h})$;
the schemes analyzed here are high-order and entropy \emph{conservative}, so that the only residual is the truncation error and the rate is the full order $\O(h^p)$, at the cost of providing no a~priori $L^\infty$ bound of their own.

For operators whose truncation errors admit an expansion with smooth coefficients --- such as finite-difference operators on uniform grids --- a classical device of Strang \cite{strang1964accurate} suggests itself:
compare the numerical solution not with $\vsol$ but with a corrected state $\usol + h^p z_1 + \dots + h^{p + \ell - 1} z_\ell$, where the correctors $z_k$ solve linear hyperbolic systems obtained by linearization around $\usol$, which are symmetrizable by the entropy Hessian $U''(\usol)$.
The relative-entropy machinery applied to the corrected comparison state then yields an $M$-norm error of order $h^{p + \ell}$, and the inverse estimate gives $L^\infty$ control whenever $p + \ell > d/2$, for every $p \ge 1$, at unchanged final rate $\O(h^p)$ and at the price of additional smoothness of $\usol$.
Carrying this out rigorously requires a precise residual expansion of the flux differencing around the corrected state, the full corrector hierarchy including all interaction terms, and admissibility of the corrected state;
we have not carried out this program and present it only as a possible route for future work.

\section{Applications}
\label{sec:applications}

We now verify the assumptions of Theorem~\ref{thm:convergence} for several classes of conservation laws.
The operator assumptions \ref{asm:sbp} and \ref{asm:bandwidth} concern only the discretization and are satisfied by the families listed in Remark~\ref{rem:operator-classes};
what remains to be checked for each system is the entropy pair (Assumption~\ref{asm:entropy-pair}), the choice of the compact convex set $K$ (Assumption~\ref{asm:smooth-solution}), and the existence of sufficiently smooth entropy-conservative fluxes (Assumption~\ref{asm:ec-flux}).
All corollaries below include arbitrary bounded continuous source terms $s(t, x)$ in the sense of Assumption~\ref{asm:source};
in particular, they cover convergence tests by the method of manufactured solutions \cite{salari2000code}.

\subsection{Scalar conservation laws}
\label{ssec:scalar}

For scalar conservation laws, $n = 1$ and $\mathcal{A} = \R$, every strictly convex function $U$ is an entropy, and the entropy-conservation condition \eqref{eq:ec-condition} determines each directional two-point flux uniquely for $a \ne b$ as the jump quotient \cite{tadmor1987numerical,tadmor2003entropy}
\begin{equation}
\label{eq:scalar-ec-flux}
  \fnum_m(a, b)
  =
  \frac{\psi_m(b) - \psi_m(a)}{w(b) - w(a)},
  \qquad
  \fnum_m(a, a) = f_m(a).
\end{equation}
Writing $u(\cdot) = (U')^{-1}$ for the inverse of the entropy variables and using $(\psi_m)' = U'' f_m$ from \eqref{eq:potential-gradient}, the flux \eqref{eq:scalar-ec-flux} has the integral representation \cite[Equation~(4.6a)]{tadmor1987numerical}
\begin{equation}
\label{eq:scalar-integral-flux}
  \fnum_m(a, b)
  =
  \int_0^1 f_m\Bigl( u\bigl( w(a) + \sigma \, (w(b) - w(a)) \bigr) \Bigr) \dif \sigma,
\end{equation}
i.e., it is the average of $f_m$ along the straight path in entropy variables.
In particular, $\fnum_m$ is $C^{p+1}$ on $\mathcal{A} \times \mathcal{A}$ whenever $f_m \in C^{p+1}$ and $U \in C^{p+2}$ with $U'' > 0$, since then $w = U'$ is a $C^{p+1}$ diffeomorphism onto its image and the entropy-variable segment in \eqref{eq:scalar-integral-flux} stays in that image.

\begin{corollary}[Scalar conservation laws]
\label{cor:scalar}
  Consider a scalar conservation law with fluxes $f_m \in C^{p+1}(\R)$, a source as in Assumption~\ref{asm:source}, and an entropy $U \in C^{p+2}(\R)$ with $U'' > 0$.
  Let $\usol \in C^{p+1}(\Omega \times [0, T])$ be a solution, let $K$ be a compact interval containing the range of $\usol$ in its interior, and let the operators satisfy Assumptions~\ref{asm:sbp}, \ref{asm:bandwidth}, and \ref{asm:order}.
  Then, the semidiscretization \eqref{eq:semidiscretization} with the fluxes \eqref{eq:scalar-ec-flux} and nodal initial data converges at rate $p$ in the sense of Theorem~\ref{thm:convergence}.
\end{corollary}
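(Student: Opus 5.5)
The plan is to reduce Corollary~\ref{cor:scalar} to Theorem~\ref{thm:convergence} by checking its hypotheses one at a time, with Lemma~\ref{lem:bandwidth} supplying Assumption~\ref{asm:remainder}.

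\emph{Entropy pair.} Assumption~\ref{asm:entropy-pair} holds for scalar problems with any strictly convex $U$. We set $F_m(a) = \int_0^a U'(\xi) f_m'(\xi) \dif \xi$, so that $F_m' = U' f_m'$ and $F_m \in C^1$.

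\emph{Smooth solution and $K$.} For Assumption~\ref{asm:smooth-solution}, $K$ is the given compact interval; it is convex and contains a neighborhood of the range of $\usol$. Since $U''$ is continuous and positive, $c_U = \min_K U''$ and $C_U = \max_K U''$ are positive and finite. Since $U \in C^{p+2} \subseteq C^3$ (as $p \ge 1$), $C_w = \max_K |U'''|$ is finite.

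\emph{Entropy-conservative flux.} For Assumption~\ref{asm:ec-flux}, we use the integral representation \eqref{eq:scalar-integral-flux}. We first check that it agrees with \eqref{eq:scalar-ec-flux}. Substitute $\eta = w(a) + \sigma(w(b) - w(a))$ and use $(\psi_m \circ u)'(\eta) = \psi_m'(u(\eta)) u'(\eta) = U''(u) f_m(u) / U''(u) = f_m(u(\eta))$, which follows from \eqref{eq:potential-gradient}. This gives $(w(b)-w(a)) \fnum_m(a,b) = \psi_m(b) - \psi_m(a)$, which is \eqref{eq:ec-condition}; for $a = b$ it gives $f_m(a)$.

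Symmetry follows from the reparametrization $\sigma \mapsto 1 - \sigma$, and consistency is immediate.

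For regularity: $w = U' \in C^{p+1}$ with $w' > 0$, so $w$ is a $C^{p+1}$ diffeomorphism of $\R$ onto the open interval $w(\R)$, and its inverse $u$ is $C^{p+1}$. Because this image is an interval, the segment between $w(a)$ and $w(b)$ stays inside it. The integrand is therefore $C^{p+1}$ in $(a,b,\sigma)$, and differentiating under the integral sign over the compact set $[0,1]$ gives $\fnum_m \in C^{p+1}(\R \times \R)$. This is the main technical point. The difficulty is that the quotient form \eqref{eq:scalar-ec-flux} is singular on the diagonal, and the integral form removes that singularity without any loss of derivatives. In particular, $C_S$ in \eqref{eq:remainder-bound} is finite on $K \times K$.

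\emph{Remaining assumptions.} Assumptions~\ref{asm:sbp}, \ref{asm:bandwidth}, and \ref{asm:order} are hypotheses of the corollary. Lemma~\ref{lem:bandwidth} then yields Assumption~\ref{asm:remainder}. The constant $C_L$ there is finite because $w \circ \usol$ is $C^1$ on the compact set $[0,T]\times\Omega$. Assumption~\ref{asm:source} is assumed directly.

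Nodal initial data give $\unum_i(0) = \usol(0,x_i) \in K$ and $E_h(0) = 0$, so Assumption~\ref{asm:initial-data} holds with $C_0 = 0$.

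Theorem~\ref{thm:convergence} now applies and gives the rate $h^p$.
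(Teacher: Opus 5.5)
Your proposal is correct and follows the paper's proof: you verify Assumption~\ref{asm:entropy-pair} with $F_m(a)=\int^a U' f_m'$, obtain the Hessian bounds and $C_w$ on the compact interval $K$ from $U''>0$ and $U\in C^{p+2}\subseteq C^3$, get Assumption~\ref{asm:ec-flux} from the integral representation \eqref{eq:scalar-integral-flux}, and get $E_h(0)=0$ from nodal initial data, then apply Theorem~\ref{thm:convergence}. You only spell out details the paper leaves implicit: the substitution showing that the integral form satisfies \eqref{eq:ec-condition}, and the use of Lemma~\ref{lem:bandwidth} (with finite $C_S$ and $C_L$) to obtain Assumption~\ref{asm:remainder}.
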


\begin{proof}
  Assumption~\ref{asm:entropy-pair} holds with $F_m(a) = \int^a U'(\sigma) f_m'(\sigma) \dif \sigma$.
  The interval $K$ is convex, and $U''$ is continuous and positive, so the Hessian bounds \eqref{eq:hessian-bounds} hold by compactness;
  likewise, $U \in C^{p+2} \subseteq C^3$ provides the constant $C_w$ for the source estimate.
  Assumption~\ref{asm:ec-flux} holds by the discussion above, and Assumption~\ref{asm:initial-data} holds with $E_h(0) = 0$.
  Apply Theorem~\ref{thm:convergence}.
\end{proof}

\subsection{Symmetric systems}
\label{ssec:symmetric}

Next, consider systems whose flux Jacobians $f_m'$ are all symmetric on a convex admissible set $\mathcal{A}$, so that $f_m = \chi_m'$ for potentials $\chi_m \in C^{p+2}(\mathcal{A})$.
Then, the quadratic entropy $U(a) = |a|^2 / 2$ forms an entropy pair with $F_m = a^T f_m(a) - \chi_m(a)$, the entropy variables are $w(a) = a$, and the entropy potentials \eqref{eq:entropy-potential} are $\psi_m = \chi_m$ up to irrelevant constants.
Tadmor's flux \eqref{eq:scalar-integral-flux} reduces to the line integrals
\begin{equation}
\label{eq:symmetric-flux}
  \fnum_m(a, b)
  =
  \int_0^1 f_m\bigl( a + \sigma \, (b - a) \bigr) \dif \sigma,
\end{equation}
which are symmetric (substitute $\sigma \mapsto 1 - \sigma$), consistent, of class $C^{p+1}$ whenever $f_m \in C^{p+1}$, and entropy-conservative by the fundamental theorem of calculus,
\begin{equation}
\begin{aligned}
  (b - a)^T \fnum_m(a, b)
  &=
  \int_0^1 \frac{\dif}{\dif \sigma} \, \chi_m\bigl( a + \sigma (b - a) \bigr) \dif \sigma
  \\
  &=
  \chi_m(b) - \chi_m(a)
  =
  \psi_m(b) - \psi_m(a).
\end{aligned}
\end{equation}

\begin{corollary}[Symmetric systems]
\label{cor:symmetric}
  Consider a system with symmetric Jacobians $f_m' \in C^{p}(\mathcal{A})$ on an open convex set $\mathcal{A} \subseteq \R^n$, a source as in Assumption~\ref{asm:source}, and the quadratic entropy.
  Let $\usol \in C^{p+1}(\Omega \times [0, T]; \mathcal{A})$ be a solution, let $K \subset \mathcal{A}$ be a compact convex set containing the range of $\usol$ in its interior, and let the operators satisfy Assumptions~\ref{asm:sbp}, \ref{asm:bandwidth}, and \ref{asm:order}.
  Then, the semidiscretization \eqref{eq:semidiscretization} with the fluxes \eqref{eq:symmetric-flux} and nodal initial data converges at rate $p$;
  the Hessian bounds hold with $c_U = C_U = 1$, and the source contribution vanishes identically since $U''' = 0$.
\end{corollary}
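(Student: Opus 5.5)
The plan is to reduce Corollary~\ref{cor:symmetric} to Theorem~\ref{thm:convergence} in combination with Lemma~\ref{lem:bandwidth}, verifying the hypotheses in order. The discretization-side assumptions (Assumptions~\ref{asm:sbp}, \ref{asm:bandwidth}, \ref{asm:order}) are assumed directly. Assumption~\ref{asm:remainder} then follows from Lemma~\ref{lem:bandwidth} once Assumptions~\ref{asm:smooth-solution} and \ref{asm:ec-flux} are checked. Assumption~\ref{asm:source} is assumed, and Assumption~\ref{asm:initial-data} holds trivially with $E_h(0)=0$ for nodal initial data.

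First, I will construct the potentials. Since $\mathcal{A}$ is convex, hence simply connected, and each $f_m \in C^{p+1}$ has symmetric Jacobian, the Poincaré lemma gives $\chi_m$ with $\chi_m' = f_m^T$. Concretely, I take $\chi_m(a) = \int_0^1 f_m(a_0 + \sigma(a-a_0))^T (a-a_0)\dif\sigma$ for a fixed $a_0\in\mathcal{A}$. Differentiating under the integral and using the symmetry of $f_m'$ gives $\chi_m' = f_m$, with $\chi_m\in C^{p+2}$. Then $U(a)=|a|^2/2$ and $F_m = a^Tf_m-\chi_m$ satisfy $F_m' = f_m + (f_m')^Ta - f_m = (f_m')^T a = (U'^T f_m')^T$, which is the compatibility condition of Assumption~\ref{asm:entropy-pair}. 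In addition, $\psi_m = a^Tf_m - F_m = \chi_m$.

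Second, I check Assumption~\ref{asm:smooth-solution}. Here $U''=\I$, so $c_U=C_U=1$ globally, and $U'''=0$ gives $C_w=0$; by Lemma~\ref{lem:source-bound}, or directly from \eqref{eq:src-definition}, $\mathrm{SRC}\equiv 0$. The set $K$ is compact and convex and contains the range of $\usol$ in its interior, hence contains a neighborhood of that range. The regularity $\usol\in C^{p+1}$ is given.

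Third, I verify Assumption~\ref{asm:ec-flux} for \eqref{eq:symmetric-flux}. The flux is well-defined on $\mathcal{A}\times\mathcal{A}$ by convexity. It is $C^{p+1}$ because $f_m\in C^{p+1}$ and one may differentiate under the integral, and the derivatives are bounded near $K\times K$ by compactness. Symmetry follows from the substitution $\sigma\mapsto 1-\sigma$, and consistency is immediate. Entropy conservation is the fundamental-theorem computation displayed before the corollary, with $w(a)=a$ and $\psi_m=\chi_m$. Lemma~\ref{lem:bandwidth} then supplies $C_R$, using $C_L = \sup|\nabla_x\usol|<\infty$, and Theorem~\ref{thm:convergence} yields the rate $p$.

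The main obstacle is mild: one must be careful with regularity bookkeeping. The stated hypothesis $f_m'\in C^p$ means $f_m\in C^{p+1}$, which suffices for $\fnum_m\in C^{p+1}$ and for the truncation estimate of Lemma~\ref{lem:truncation}. One must also make sure the Taylor remainder bound \eqref{eq:remainder-bound} is taken on $K\times K$, which is a subset of $\mathcal{A}\times\mathcal{A}$ because $K$ is convex and compact inside $\mathcal{A}$. Everything else is direct substitution.
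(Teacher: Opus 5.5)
Your proposal is correct and follows essentially the paper's route: verify the hypotheses of Theorem~\ref{thm:convergence}, namely the entropy pair $F_m = a^T f_m - \chi_m$, $U'' = \I$ (so $c_U = C_U = 1$ and $\mathrm{SRC} \equiv 0$), the line-integral flux \eqref{eq:symmetric-flux} being symmetric, consistent, $C^{p+1}$, and entropy-conservative by the fundamental theorem of calculus, Assumption~\ref{asm:remainder} from Lemma~\ref{lem:bandwidth}, and $E_h(0) = 0$ for nodal data, and then apply the theorem. Your explicit Poincar\'e-lemma construction of $\chi_m$ simply fills in a step the paper takes for granted.
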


\begin{remark}[No homogeneity required]
\label{rem:no-homogeneity}
  Corollary~\ref{cor:symmetric} covers the class of systems analyzed in \cite{worku2026convergence}, whose energy-method analysis additionally uses homogeneity properties of the fluxes $f_m$ for the split-form structure.
  The present relative-entropy argument does not require any homogeneity.
  If the second derivatives of the fluxes are bounded globally, as assumed there, the order condition can be dropped as well;
  see the discussion following Proposition~\ref{prop:global-bounds}.
\end{remark}

For general systems with a strictly convex entropy, Tadmor's flux \eqref{eq:scalar-integral-flux} interpreted along straight paths in entropy variables provides entropy-conservative fluxes in great generality \cite{tadmor1987numerical,lefloch2002fully};
its smoothness on $\mathcal{A} \times \mathcal{A}$ follows from the integral representation as above, provided the entropy-variable path stays in the domain where the entropy Hessian is invertible.
In practice, explicit algebraic fluxes are preferred for efficiency \cite{ismail2009affordable};
the following two subsections verify Assumption~\ref{asm:ec-flux} for such fluxes for the shallow water and compressible Euler equations.

\subsection{Shallow water equations}
\label{ssec:shallow-water}

The shallow water equations without bottom topography, in $d \in \{1, 2\}$ horizontal dimensions, read
\begin{equation}
\label{eq:swe}
  \partial_t
  \begin{pmatrix} H \\ H v \end{pmatrix}
  + \sum_{m=1}^{d} \partial_m
  \begin{pmatrix} H v_m \\ H v_m v + \frac{g}{2} H^2 e_m \end{pmatrix}
  =
  0,
\end{equation}
where $H$ is the water height\footnote{We use $H$ instead of the usual $h$ to avoid confusion with the typical mesh size $h$ introduced in Assumption~\ref{asm:sbp}.}, $v \in \R^d$ the velocity with components $v_k$, $g > 0$ the gravitational acceleration, and $e_m$ the $m$-th unit vector.
The admissible set $\mathcal{A} = \{ (H, H v) \mid H > 0 \}$ is an open half-space and thus convex.
The total energy and the associated fluxes \cite{fjordholm2011well,gassner2016well,wintermeyer2017entropy,ranocha2017shallow},
\begin{equation}
  U = \frac{1}{2} H |v|^2 + \frac{g}{2} H^2,
  \qquad
  F_m = \Bigl( \frac{1}{2} H |v|^2 + g H^2 \Bigr) v_m,
\end{equation}
form an entropy pair with entropy variables and potentials
\begin{equation}
  w = \begin{pmatrix} g H - |v|^2 / 2 \\ v \end{pmatrix},
  \qquad
  \psi_m = \frac{g}{2} H^2 v_m.
\end{equation}
In the conserved variables $(H, q)$ with $q = H v$, the entropy $U = |q|^2 / (2 H) + g H^2 / 2$ has the Hessian
\begin{equation}
  U''
  =
  \frac{1}{H}
  \begin{pmatrix}
    g H + |v|^2 & - v^T \\
    - v & \operatorname{I}_d
  \end{pmatrix},
  \qquad v = \frac{q}{H},
\end{equation}
which is strictly positive definite on $\mathcal{A}$, since its quadratic form
\begin{equation}
  \begin{pmatrix} a \\ b \end{pmatrix}^T U'' \begin{pmatrix} a \\ b \end{pmatrix}
  =
  g \, a^2 + \frac{1}{H} \, |a v - b|^2
\end{equation}
is positive for every $(a, b) \in \R \times \R^d$ with $(a, b) \ne 0$, using $g > 0$ and $H > 0$.
The bounds \eqref{eq:hessian-bounds} and the constant $C_w$ thus hold on every compact $K \subset \mathcal{A}$ by compactness;
their possible deterioration towards vacuum is through the positive lower height bound $H_{\min} = \min_{K} H > 0$.
As in Remark~\ref{rem:convexity-K}, $K$ can be chosen as the closed convex hull of the range of the smooth solution plus a small margin.

Fjordholm, Mishra, and Tadmor \cite{fjordholm2009energy,fjordholm2011well} proposed the entropy-conservative fluxes
\begin{equation}
\label{eq:fmt-flux}
  \fnum_m(a, b)
  =
  \begin{pmatrix}
    \mean{H} \mean{v_m} \\
    \mean{H} \mean{v_m} \mean{v} + \frac{g}{2} \mean{H^2} \, e_m
  \end{pmatrix},
\end{equation}
where $\mean{\cdot}$ denotes the arithmetic mean of the values at the states $a$ and $b$, applied componentwise to $v$.
The fluxes \eqref{eq:fmt-flux} are symmetric and consistent by inspection, satisfy the entropy-conservation condition \eqref{eq:ec-condition} by a direct computation \cite{fjordholm2009energy,fjordholm2011well}, and are polynomial in $(H, v)$ of both states;
since $(H, q) \mapsto (H, v)$ is smooth on $\mathcal{A} = \{H > 0\}$, they are $C^\infty$ on $\mathcal{A} \times \mathcal{A}$.
The same holds for other entropy-conservative fluxes \cite{gassner2016well,wintermeyer2017entropy,ranocha2017shallow}.

\begin{corollary}[Shallow water equations]
\label{cor:shallow-water}
  Let $u = (H, H v) \in C^{p+1}(\Omega \times [0, T])$ be a solution of \eqref{eq:swe}, or of \eqref{eq:swe} with an added source term satisfying Assumption~\ref{asm:source}, with $H \ge 2 H_{\min} > 0$, and let the operators satisfy Assumptions~\ref{asm:sbp}, \ref{asm:bandwidth}, and \ref{asm:order}.
  Then, the semidiscretization \eqref{eq:semidiscretization} with the fluxes \eqref{eq:fmt-flux} of \cite{fjordholm2009energy,fjordholm2011well} (or similar fluxes of \cite{gassner2016well,wintermeyer2017entropy,ranocha2017shallow}) and nodal initial data converges at rate $p$;
  among the data of Theorem~\ref{thm:convergence}, the constants deteriorate towards vacuum only through the lower height bound $H_{\min}$.
  In particular, the discrete water heights remain positive on $[0, T]$ for all sufficiently fine meshes, without any positivity limiter.
\end{corollary}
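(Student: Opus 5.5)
The plan is to verify the hypotheses of Theorem~\ref{thm:convergence} one by one, in the same way as for Corollaries~\ref{cor:scalar} and~\ref{cor:symmetric}, and then read off the positivity statement from the containment $\unum_i(t) \in K$.

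\emph{Choice of $K$.} Let $R$ be the range of $\usol$ on $[0,T]\times\Omega$. It is compact by continuity and periodicity, and $H \ge 2H_{\min}$ holds on it. Take $K$ to be the closed convex hull of $R$ enlarged by a margin $\eta>0$, for example $K = \overline{\mathrm{conv}}(R) + \overline{B_\eta(0)}$. The height $H$ is a linear coordinate in the conserved variables, so $\min_K H \ge 2H_{\min} - \eta \ge H_{\min}$ for $\eta \le H_{\min}$. Hence $K$ is a compact convex subset of the open half-space $\mathcal{A}$ and contains a neighborhood of $R$, as Assumption~\ref{asm:smooth-solution} requires.

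\emph{Remaining hypotheses.} On $K$, the explicit Hessian $U''$ is continuous and positive definite, as shown above. By compactness we obtain $0<c_U\le C_U$. Since $U$ is $C^\infty$ on $\{H>0\}$, the constant $C_w=\sup_K|U'''|$ is finite. The entries of $U''$ and $U'''$ are rational in $(H,q)$ with denominators that are powers of $H$, so all these constants depend on $K$ only through $H_{\min}$ and bounds on $|q|$ on $K$; the latter are fixed by $\usol$.

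Assumption~\ref{asm:entropy-pair} is the stated entropy pair; the compatibility condition is a direct check. For Assumption~\ref{asm:ec-flux}, the flux \eqref{eq:fmt-flux} is symmetric and consistent, entropy-conservative by the cited computation, and $C^\infty$ on $\mathcal{A}\times\mathcal{A}$. Assumption~\ref{asm:source} is given by hypothesis. Nodal initial data give $E_h(0)=0$, which is Assumption~\ref{asm:initial-data}. Assumptions~\ref{asm:sbp} and~\ref{asm:order} are assumed. Assumption~\ref{asm:remainder} follows from Assumption~\ref{asm:bandwidth} by Lemma~\ref{lem:bandwidth}, because $C_S=\max_m\sup_{K\times K}|\mathrm{D}^2\fnum_m|$ is finite on the compact set $K\times K$. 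The regularity $\usol\in C^{p+1}$ is given.

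\emph{Conclusion.} Theorem~\ref{thm:convergence} now yields the rate $p$ and $\unum_i(t)\in K$ on $[0,T]$ for $h<h_0$. Since $H\ge H_{\min}>0$ on $K$, the discrete water heights stay positive without any limiter.

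The main obstacle is the bookkeeping needed to show that the constants degenerate only through $H_{\min}$, namely $c_U$, $C_U$, $C_w$, $C_S$, $C_L$ and the $C^{p+1}$ norms of the fluxes on $K\times K$ entering Lemma~\ref{lem:truncation}. I would handle this by noting that all these quantities are rational in $(H,q)$ with denominators that are powers of $H$, with numerators bounded by data of $\usol$ on the bounded set $K$. For the lower bound on $c_U$, I would use the quadratic form identity $g a^2 + |av-b|^2/H$. Given $|v|\le V$ on $K$, this gives a lower bound depending on $g$, $V$ and the upper bound on $H$. So the genuine deterioration near vacuum comes from $1/H$ in $C_U$, $C_w$ and the flux derivatives, which are bounded by powers of $1/H_{\min}$.
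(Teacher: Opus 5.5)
Your proposal is correct and follows essentially the same route as the paper. You choose $K$ as the closed convex hull of the range of $\usol$ plus a margin, so that $\min_K H \ge H_{\min}$, and get $c_U$, $C_U$, $C_w$, and the flux constants by compactness from the positive-definite Hessian and the smoothness of the fluxes \eqref{eq:fmt-flux} on $\{H>0\}$; you then apply Theorem~\ref{thm:convergence} and read off positivity from $\unum_i(t) \in K \subset \{H \ge H_{\min}\}$, with your quadratic-form argument that $c_U$ is independent of $H_{\min}$ being only a small refinement of the paper's compactness bookkeeping.
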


\subsection{Compressible Euler equations}
\label{ssec:euler}

The compressible Euler equations of an ideal gas in $d$ space dimensions read
\begin{equation}
\label{eq:euler}
  \partial_t
  \begin{pmatrix} \rho \\ \rho v \\ \rho e \end{pmatrix}
  + \sum_{m=1}^{d} \partial_m
  \begin{pmatrix} \rho v_m \\ \rho v_m v + p \, e_m \\ (\rho e + p) v_m \end{pmatrix}
  =
  0,
  \qquad
  p = (\gamma - 1) \Bigl( \rho e - \frac{1}{2} \rho |v|^2 \Bigr),
\end{equation}
with density $\rho$, velocity $v \in \R^d$, total energy density $\rho e$, pressure $p$, and ratio of specific heats $\gamma > 1$.
First, we state a well-known result.

\begin{lemma}[Admissible set]
\label{lem:euler-admissible}
  The set of admissible states $\mathcal{A} = \{ u = (\rho, \rho v, \rho e) \mid \rho > 0, \ p(u) > 0 \}$ is open and convex.
\end{lemma}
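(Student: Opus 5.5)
Openness and convexity are shown separately, writing $m = \rho v$ for the momentum and $E = \rho e$ for the total energy density, so that a state is $u = (\rho, m, E) \in \R \times \R^d \times \R$. In these conserved variables the pressure is
\begin{equation*}
  p(u) = (\gamma - 1)\Bigl( E - \frac{|m|^2}{2\rho} \Bigr),
\end{equation*}
which is continuous on the open half-space $\{\rho > 0\}$. Hence $\mathcal{A} = \{\rho > 0\} \cap \{p > 0\}$ is the intersection of an open set with the preimage of the open interval $(0,\infty)$ under a continuous map defined on that open set, and is therefore open.

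For convexity, the key observation is that the kinetic energy $k(\rho, m) = |m|^2/(2\rho)$ is jointly convex on $\{\rho > 0\} \times \R^d$. It is the perspective function of the convex function $m \mapsto |m|^2/2$, and perspectives preserve convexity. Alternatively, one can check directly that its Hessian
\begin{equation*}
  \frac{1}{\rho}
  \begin{pmatrix}
    |v|^2 & -v^T \\
    -v & \operatorname{I}_d
  \end{pmatrix},
  \qquad v = m/\rho,
\end{equation*}
is positive semidefinite. Indeed, its quadratic form at $(a, b)$ equals $|a v - b|^2/\rho \ge 0$, exactly as in the shallow water computation above without the $g a^2$ term.

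Since $\gamma > 1$, the admissible set can then be rewritten as
\begin{equation*}
  \mathcal{A} = \bigl\{ (\rho, m, E) \bigm| \rho > 0,\ E - k(\rho, m) > 0 \bigr\}.
\end{equation*}
The function $E - k(\rho, m)$ is concave on the convex set $\{\rho > 0\}$, so its strict superlevel set $\{E - k > 0\}$ intersected with this convex domain is convex. Concretely, for $u_0, u_1 \in \mathcal{A}$ and $\sigma \in [0, 1]$, the convex combination satisfies $\rho_\sigma > 0$, and
\begin{equation*}
  E_\sigma - k(\rho_\sigma, m_\sigma) \ge (1 - \sigma)\bigl(E_0 - k_0\bigr) + \sigma \bigl(E_1 - k_1\bigr) > 0
\end{equation*}
by convexity of $k$. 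Hence $u_\sigma \in \mathcal{A}$, and $\mathcal{A}$ is convex.

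The only nontrivial step is the joint convexity of $|m|^2/(2\rho)$. It is handled by the Hessian computation, or by the elementary inequality
\begin{equation*}
  \frac{|m_\sigma|^2}{\rho_\sigma} \le (1 - \sigma)\frac{|m_0|^2}{\rho_0} + \sigma \frac{|m_1|^2}{\rho_1},
\end{equation*}
which follows from Cauchy–Schwarz applied to the weights $\sqrt{(1 - \sigma)\rho_0}$, $\sqrt{\sigma \rho_1}$ and the vectors $m_0/\sqrt{\rho_0}$, $m_1/\sqrt{\rho_1}$.
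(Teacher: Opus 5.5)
Your proof is correct and follows the paper's argument. In both, the kinetic energy $|\rho v|^2/(2\rho)$ is jointly convex as the perspective of $q \mapsto |q|^2/2$, so $\rho e - |\rho v|^2/(2\rho)$ is concave on $\{\rho > 0\}$, $\{p > 0\}$ is a convex superlevel set, and $\mathcal{A}$ is the intersection of two convex sets; your explicit openness argument and Hessian check spell out what the paper leaves implicit, though in the Cauchy--Schwarz variant the vectors should be $\sqrt{1-\sigma}\, m_0/\sqrt{\rho_0}$ and $\sqrt{\sigma}\, m_1/\sqrt{\rho_1}$.
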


The convexity of the admissible set is classical and underlies the positivity-preserving schemes of Perthame and Shu \cite{perthame1996positivity} and Zhang and Shu \cite{zhang2010positivity}; we include the short argument for completeness.

\begin{proof}
  The set $\{\rho > 0\}$ is an open half-space.
  The map $u \mapsto \rho e - |\rho v|^2 / (2 \rho)$ is concave on $\{ \rho > 0 \}$, since $|\rho v|^2 / (2 \rho)$ is a perspective function of the convex map $q \mapsto |q|^2 / 2$ and hence jointly convex \cite[Section~3.2.6]{boyd2004convex}.
  Thus, $\{ p(u) > 0 \}$ is a superlevel set of a concave function and convex, and $\mathcal{A}$ is the intersection of two convex sets.
\end{proof}

We use the physical specific entropy\footnote{We use $\varsigma$ instead of $s$ to avoid confusion with the source term $s(t,x)$ of \eqref{eq:conservation-law}.} $\varsigma = \log(p \rho^{-\gamma})$ and the entropy pair \cite{ismail2009affordable,chandrashekar2013kinetic,ranocha2018comparison}
\begin{equation}
\label{eq:euler-entropy}
  U = - \frac{\rho \varsigma}{\gamma - 1},
  \qquad
  F_m = - \frac{\rho v_m \varsigma}{\gamma - 1},
\end{equation}
which is the canonical member of the Harten family \cite{harten1983symmetric}.
The entropy variables and the entropy potentials are
\begin{equation}
\label{eq:euler-entropy-variables}
  w
  =
  \begin{pmatrix}
    \frac{\gamma - \varsigma}{\gamma - 1} - \beta |v|^2 \\
    2 \beta v \\
    - 2 \beta
  \end{pmatrix},
  \qquad
  \beta = \frac{\rho}{2 p},
  \qquad
  \psi_m = \rho v_m,
\end{equation}
i.e., the entropy potentials are the momentum components \cite[Remark~3]{tadmor2003entropy};
both formulas follow from direct computations.
$U$ is strictly convex on $\mathcal{A}$ \cite{harten1983symmetric} and \cite[Section~3.3.5]{dafermos2016hyperbolic}, so the bounds \eqref{eq:hessian-bounds} and the constant $C_w$ are finite on every compact subset of $\mathcal{A}$;
explicit expressions for $c_U$ and $C_U$ in the one-dimensional case are derived in Appendix~\ref{app:euler-constants}, and the same compactness argument applies verbatim for $d \ge 2$.

Given a smooth solution with $\rho \ge 2 \rho_{\min} > 0$ and $p \ge 2 p_{\min} > 0$ on $\Omega \times [0, T]$, let $\mathcal{N}_\epsilon$ denote the union of the closed Euclidean balls of radius $\epsilon$ around the points of the range of $\usol$, and choose
\begin{equation}
\label{eq:euler-K}
  K
  =
  \overline{\operatorname{conv}}\bigl( \mathcal{N}_\epsilon \bigr)
  \cap
  \{ \rho \ge \rho_{\min} \}
  \cap
  \{ p(u) \ge p_{\min} \},
\end{equation}
where $\epsilon > 0$ is small enough that $\mathcal{N}_\epsilon \subseteq \{ \rho \ge \rho_{\min} \} \cap \{ p(u) \ge p_{\min} \}$;
this is possible since $\rho$ and $p$ are continuous and satisfy the twofold bounds on the compact range.
The set $K$ is compact, convex (all three sets are convex, the last one by the concavity used in Lemma~\ref{lem:euler-admissible}), contained in $\mathcal{A}$, and contains the $\epsilon$-neighborhood $\mathcal{N}_\epsilon$ of the range of $\usol$.
We emphasize that a box in the primitive variables $(\rho, v, p)$ is in general \emph{not} convex in the conserved variables, since $p(u)$ is concave;
the construction \eqref{eq:euler-K} avoids this pitfall.

As a concrete entropy-conservative flux family for the entropy pair \eqref{eq:euler-entropy}, we use the fluxes of \cite{ranocha2018comparison,ranocha2021preventing}.
They are built from arithmetic means $\mean{\cdot}$, the logarithmic mean
\begin{equation}
\label{eq:logmean}
  \logmean{a}
  =
  \frac{a_L - a_R}{\log a_L - \log a_R},
  \qquad
  \logmean{a}\big|_{a_L = a_R} = a_L,
\end{equation}
and the product mean $\prodmean{a}{b} = ( a_L b_R + a_R b_L ) / 2$, and read, for the direction $m$ and momentum components $k \in \{1, \dots, d\}$,
\begin{equation}
\label{eq:flux-ranocha}
\begin{aligned}
  \fnum_{\rho} &= \logmean{\rho} \mean{v_m},
  \\
  \fnum_{\rho v_k} &= \mean{v_k} \fnum_\rho + \delta_{km} \mean{p},
  \\
  \fnum_{\rho e} &= \frac{1}{2} \Bigl( \sum_{k=1}^{d} \prodmean{v_k}{v_k} \Bigr) \fnum_\rho + \frac{1}{(\gamma - 1) \logmean{\rho / p}} \fnum_\rho + \prodmean{p}{v_m},
\end{aligned}
\end{equation}
which are also kinetic-energy-preserving and pressure-equilibrium-preserving \cite{ranocha2021preventing}.
Other classical entropy-conservative Euler fluxes include those of Chandrashekar \cite{chandrashekar2013kinetic} and of Ismail and Roe \cite{ismail2009affordable}; see \cite{ranocha2018comparison} for a comparison.
They are built from similar means of positive quantities and are covered by the analysis in the same way, so we work with \eqref{eq:flux-ranocha} for concreteness;
the numerical experiments of Section~\ref{sec:numerics} use it as well.

\begin{lemma}[Flux properties]
\label{lem:euler-fluxes}
  On $\mathcal{A} \times \mathcal{A}$, the flux family \eqref{eq:flux-ranocha} satisfies Assumption~\ref{asm:ec-flux} for each direction $m$, with $C^{p+1}$ replaced by $C^\infty$.
\end{lemma}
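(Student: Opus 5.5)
The plan is to verify the three items of Assumption~\ref{asm:ec-flux} separately, and to dispose of smoothness first, since it is the only delicate point.

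\emph{Smoothness.} On $\mathcal{A}$, the primitive variables $(\rho, v, p)$ are $C^\infty$ functions of the conserved variables, with $\rho > 0$ and $p > 0$. Hence $\rho/p$ is smooth and positive there. Arithmetic and product means of these quantities are polynomial in both states, so they are $C^\infty$ on $\mathcal{A} \times \mathcal{A}$. The main obstacle is the logarithmic mean \eqref{eq:logmean}, whose formula is $0/0$ on the diagonal. I would use the integral representation
\begin{equation}
  \frac{1}{\logmean{a}} = \int_0^1 \frac{\dif \sigma}{\sigma a_L + (1 - \sigma) a_R},
  \qquad a_L, a_R > 0.
\end{equation}
It follows from $\log a_L - \log a_R = \int_0^1 \frac{\dif}{\dif \sigma} \log\bigl(\sigma a_L + (1-\sigma) a_R\bigr) \dif \sigma$. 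The integrand is $C^\infty$ in $(a_L, a_R) \in (0,\infty)^2$, and locally uniformly so with all derivatives. Differentiation under the integral therefore shows that $1/\logmean{a}$ is $C^\infty$ and positive, including on the diagonal, where it equals $1/a_L$. Consequently $\logmean{\rho}$ is $C^\infty$. The term $1/((\gamma-1)\logmean{\rho/p})$ is even directly the smooth reciprocal.

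\emph{Symmetry and consistency.} Every mean used is invariant under swapping the two states, so symmetry holds by inspection. For consistency, set $a_L = a_R$: then $\logmean{\rho} = \rho$ and $\prodmean{p}{v_m} = p v_m$. This gives $\fnum_\rho = \rho v_m$ and $\fnum_{\rho v_k} = \rho v_m v_k + \delta_{km} p$. For the energy component,
\begin{equation}
  \fnum_{\rho e} = \tfrac12 \rho |v|^2 v_m + \frac{p}{\gamma - 1} v_m + p v_m = (\rho e + p) v_m,
\end{equation}
using $p/(\gamma-1) = \rho e - \frac12 \rho |v|^2$.

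\emph{Entropy conservation.} I would verify \eqref{eq:ec-condition} using the entropy variables and potentials \eqref{eq:euler-entropy-variables}. It helps to write jumps $\llbracket \cdot \rrbracket = (\cdot)_b - (\cdot)_a$ and use the discrete product rules $\llbracket xy \rrbracket = \mean{x}\llbracket y \rrbracket + \mean{y}\llbracket x \rrbracket$, together with $\llbracket \log x \rrbracket = \llbracket x \rrbracket / \logmean{x}$. For the first component, expand $\varsigma = \log p - \gamma \log \rho = -\log(\rho/p) - (\gamma-1)\log\rho$. This produces $\llbracket \log \rho \rrbracket \logmean{\rho} = \llbracket \rho \rrbracket$ and $\llbracket \log(\rho/p) \rrbracket \logmean{\rho/p} = \llbracket \rho/p \rrbracket$. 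The $\logmean{\rho/p}$ term in $\fnum_{\rho e}$ is designed precisely to cancel the latter. The remaining velocity and pressure terms (with $2\beta = \rho/p$) combine via the product-mean identity $\llbracket xy \rrbracket = \prodmean{x}{\llbracket y\rrbracket}$-type rearrangements into $\llbracket \rho v_m \rrbracket = \llbracket \psi_m \rrbracket$.

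This algebra is lengthy but routine and is carried out in \cite{ranocha2018comparison,ranocha2021preventing}, which I would cite rather than reproduce. If only the diagonal limit needed care, I would note that both sides of \eqref{eq:ec-condition} are continuous, so the identity extends from $a \ne b$ to the diagonal.
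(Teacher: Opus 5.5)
Your proposal is correct and follows the paper's proof essentially step by step. Symmetry follows from the symmetry of all means, and consistency from setting both states equal (your energy-component computation is the one the paper displays). Smoothness follows from the integral representation of $1/\logmean{a}$ together with the positivity of $\rho$ and $\rho/p$ on $\mathcal{A}$, and the entropy-conservation identity is deferred to \cite{ranocha2018comparison,ranocha2021preventing}.

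Your extra sketch of that identity also checks out. The $\logmean{\rho/p}$ contributions cancel as you say, and the velocity terms multiplying $\fnum_\rho$ sum to zero. The pressure terms leave $\mean{\rho}$ times the jump of $v_m$, which combines with $\mean{v_m}$ times the jump of $\rho$ to the jump of $\rho v_m$. The ``product-mean identity'' you quote is garbled as written, but this does not affect the argument.
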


\begin{proof}
  Symmetry holds since all involved means are symmetric.
  For consistency, all means reduce to the point value at coinciding states, and short computations give $\fnum_m(a, a) = f_m(a)$;
  e.g., $\fnum_{\rho e}(a, a) = \rho v_m |v|^2 / 2 + p v_m / (\gamma - 1) + p v_m = (\rho e + p) v_m$.
  The entropy-conservation condition \eqref{eq:ec-condition} with $\psi_m = \rho v_m$ is verified in \cite{ranocha2018comparison,ranocha2021preventing}.
  Concerning smoothness, on $\mathcal{A}$ the quantities $\rho$ and $\rho / p$ are positive, and the primitive variables are smooth functions of the conserved variables.
  The logarithmic mean is real-analytic on $(0, \infty)^2$ by the integral representation \eqref{eq:inverse-logmean} of Appendix~\ref{app:euler-constants}, arithmetic and product means are polynomial, and the denominator $\logmean{\rho / p}$ is positive on $\mathcal{A} \times \mathcal{A}$.
  Hence, the flux family is $C^\infty$ on $\mathcal{A} \times \mathcal{A}$.
\end{proof}

\begin{corollary}[Compressible Euler equations]
\label{cor:euler}
  Let $\usol \in C^{p+1}(\Omega \times [0, T]; \mathcal{A})$ be a solution of \eqref{eq:euler}, or of \eqref{eq:euler} with an added source term satisfying Assumption~\ref{asm:source}, with $\rho \ge 2 \rho_{\min} > 0$ and $p \ge 2 p_{\min} > 0$, let $K$ be given by \eqref{eq:euler-K}, and let the operators satisfy Assumptions~\ref{asm:sbp}, \ref{asm:bandwidth}, and \ref{asm:order}.
  Then, the semidiscretization \eqref{eq:semidiscretization} with the fluxes \eqref{eq:flux-ranocha} and nodal initial data converges at rate $p$ in the sense of Theorem~\ref{thm:convergence}.
  The threshold $h_0$ and the constant $C_T$ depend on $\rho_{\min}$ and $p_{\min}$ through the Hessian bounds $c_U, C_U$, the constant $C_w$, the flux constant $C_S$, and the Lipschitz constant $C_L$;
  some explicit (sufficient, non-sharp) expressions for $d = 1$ are given in Appendix~\ref{app:euler-constants}.
\end{corollary}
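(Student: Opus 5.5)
The plan is to reduce Corollary~\ref{cor:euler} to Theorem~\ref{thm:convergence}, using Lemma~\ref{lem:bandwidth} to supply Assumption~\ref{asm:remainder}. So it suffices to check each remaining hypothesis of the theorem for the Euler system with the set $K$ from \eqref{eq:euler-K}. The operator hypotheses (Assumptions~\ref{asm:sbp}, \ref{asm:bandwidth}, \ref{asm:order}) are assumed directly. Nodal initial data give $E_h(0)=0$, hence Assumption~\ref{asm:initial-data}. The source hypothesis, Assumption~\ref{asm:source}, is assumed.

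For Assumption~\ref{asm:entropy-pair}, I will cite the entropy pair \eqref{eq:euler-entropy}. Its strict convexity on $\mathcal{A}$ is classical, and the compatibility $F_m'^T = U'^T f_m'$ is a direct computation, or equivalently follows from \eqref{eq:potential-gradient} with $\psi_m=\rho v_m$.

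For Assumption~\ref{asm:smooth-solution}, I will use the discussion after \eqref{eq:euler-K}. There $K$ is compact and convex, since it is an intersection of a compact convex hull with convex sets, the pressure set being convex by Lemma~\ref{lem:euler-admissible}. Moreover $K\subset\mathcal{A}$, because $\rho\ge\rho_{\min}>0$ and $p\ge p_{\min}>0$ on $K$. Finally, $K$ contains the $\epsilon$-neighborhood of the range of $\usol$. Since $U$ is $C^\infty$ on $\mathcal{A}$ with positive definite Hessian, continuity of the smallest and largest eigenvalues of $U''$ on the compact set $K$ gives $0<c_U\le C_U<\infty$ and a finite $C_w=\sup_K|U'''|$.

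Assumption~\ref{asm:ec-flux} is exactly Lemma~\ref{lem:euler-fluxes}: the fluxes are $C^\infty$, hence $C^{p+1}$, on $\mathcal{A}\times\mathcal{A}\supset K\times K$. This also makes $C_S=\max_m\sup_{K\times K}|\mathrm{D}^2\fnum_m|$ finite. Then Lemma~\ref{lem:bandwidth} yields Assumption~\ref{asm:remainder}. The constant $C_L$ is finite because $w$ is smooth on $\mathcal{A}$ and $\usol\in C^{1}$ with compact range. Applying Theorem~\ref{thm:convergence} gives existence, containment in $K$, and the rate $h^p$.

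The main care point is the dependence statement. I will trace through the proof of Theorem~\ref{thm:convergence}: $C_T$ and $h_0$ are built from $c_U$, $C_U$, $C_w$, $C_R$ (which contains $C_S$ and $C_L$), and $\delta$ (here $\epsilon$), along with operator constants and $\|\usol\|_{C^{p+1}}$. The quantities $c_U$, $C_U$, $C_w$, $C_S$ and $C_L$ are the only ones that involve the geometry near vacuum, through $\rho_{\min}$ and $p_{\min}$. This argument is bookkeeping rather than new estimates; I expect the only subtle step to be confirming that $K$ is genuinely convex in conserved variables, which the construction \eqref{eq:euler-K} already secures.
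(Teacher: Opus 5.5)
Your proposal is correct and takes the same route as the paper, which obtains the corollary by verifying the hypotheses of Theorem~\ref{thm:convergence} one by one: the entropy pair \eqref{eq:euler-entropy}, the compact convex set \eqref{eq:euler-K} built on the concavity from Lemma~\ref{lem:euler-admissible}, the flux properties of Lemma~\ref{lem:euler-fluxes}, the structural bound via Lemma~\ref{lem:bandwidth}, and $E_h(0)=0$ for nodal data. One point to soften is your claim that $c_U$, $C_U$, $C_w$, $C_S$, $C_L$ are the \emph{only} channels through which $\rho_{\min}, p_{\min}$ enter: the bootstrap margin $\delta=\epsilon$ (chosen so that $\mathcal{N}_\epsilon \subseteq \{\rho \ge \rho_{\min}\} \cap \{p(u) \ge p_{\min}\}$) and the truncation constant of Lemma~\ref{lem:truncation} (which uses $C^{p+1}$-norms of the fluxes, not just $C_S$) depend on them as well---an imprecision already present in the paper's wording of the statement.
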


Since $K \subset \mathcal{A}$, the conclusion of Theorem~\ref{thm:convergence} includes that the discrete states keep positive density and pressure on $[0, T]$ for all $h < h_0$.
For smooth solutions away from vacuum, no positivity limiter is needed on sufficiently fine meshes.

More generally, $U = - \rho \, \eta(\varsigma) / (\gamma - 1)$ is a strictly convex entropy for every $\eta$ with $\eta' > 0$ and $\eta' - \gamma \eta'' > 0$, the latter being Harten's condition $\eta'' / \eta' < 1 / \gamma$ \cite[Equation~(2.8c)]{harten1983symmetric}.
The analysis only requires \emph{some} strictly convex entropy pair with smooth entropy-conservative fluxes;
we use the canonical member \eqref{eq:euler-entropy}, for which the fluxes \eqref{eq:flux-ranocha} are entropy-conservative.
Alternative fluxes are studied in \cite{ranocha2022note}.

\section{Numerical experiments}
\label{sec:numerics}

We now illustrate the theory with convergence experiments that span the operator classes, equations, entropy pairs, and space dimensions covered by the analysis.
Throughout, the observed rates confirm the central prediction of Theorem~\ref{thm:convergence}: the discrete $M$-norm error decays at least at the order $p$ of the underlying diagonal-norm SBP operator.

\subsection{Discretizations}
\label{sec:numerics-setup}

The methods are implemented in Julia \cite{bezanson2017julia} with the \texttt{DGMulti} solver of Trixi.jl \cite{schlottkelakemper2020trixi,schlottkelakemper2021purely,ranocha2022adaptive} together with the SBP operators of SummationByPartsOperators.jl \cite{ranocha2021sbp} and the reference elements of StartUpDG.jl \cite{chan2026startupdg}.
Time integration uses the fourth-order, nine-stage, low-storage, error-controlled Runge--Kutta method of \cite{ranocha2022optimized} implemented in the DifferentialEquations.jl ecosystem \cite{rackauckas2017differentialequations} with adaptive step-size control and equal absolute and relative tolerances chosen small enough that the temporal error lies well below the spatial error.
All code to reproduce the numerical experiments is available online in the reproducibility repository \cite{ranocha2026convergenceRepro}.

All experiments use entropy-conservative flux differencing~\eqref{eq:semidiscretization} on periodic domains with an entropy-conservative two-point volume flux and the same entropy-conservative flux on element interfaces if multiple blocks/elements are present so that the semidiscretization is entropy conservative without any interface dissipation (Lemma~\ref{lem:entropy-conservation}).
They exercise four operator classes, all of which are diagonal-norm and fixed-local-stencil in the sense of Assumptions~\ref{asm:sbp} and \ref{asm:bandwidth} and hence are covered by Theorem~\ref{thm:convergence} via Lemma~\ref{lem:bandwidth}:
finite-difference SBP operators, used in their two refinement modes --- a single periodic block of orders $2$, $4$, and $6$ refined in the node count, and a multi-block coupling by simultaneous approximation terms (SBP-SAT) \cite{carpenter1994time} of the boundary-optimized operators of Mattsson, Almquist and van der Weide \cite{mattsson2018boundary} of interior orders $4$ and $6$ (with $11$ and $15$ nodes per block, respectively), refined in the number of blocks;
nodal discontinuous Galerkin spectral-element (DGSEM) operators on Legendre--Gauss--Lobatto nodes of polynomial degree $k$;
continuous-Galerkin spectral-element (CGSEM) operators of degree $k$,
and, in two space dimensions, multidimensional SBP operators on triangles \cite{hicken2016multidimensional,chen2017entropy}, which are genuinely multidimensional and non-tensor-product, specifically the diagonal-$E$ operators of degree $k$ built from the symmetric SBP quadrature rules with Gauss--Lobatto face nodes of \cite{wu2021high}.
We also consider a smoothly curved grid.

The discrete $L^2$ error is computed with the SBP mass-matrix quadrature and therefore coincides with the $M$-norm $\|\vnum - \vsol\|_M$.
The mesh size $h$ of Assumption~\ref{asm:sbp} is the node spacing of the assembled operator for the single periodic FD block, i.e., the length of the domain divided by the number $N_n$ of nodes per coordinate direction;
for all other classes --- the multi-block FD operators and the element-based DGSEM, CGSEM, and triangular operators --- the number of nodes per block or element is fixed under refinement, so $h$ is the block or element width, i.e., the length of the domain divided by the number $N_e$ of blocks or elements per coordinate direction.
Both are reported in the tables below; the symbol $N$ remains reserved for the total number of nodes of the assembled operator, as in Assumption~\ref{asm:sbp}.
The same convention is used in two space dimensions, where the meshes are uniform in each direction and the triangular meshes are obtained by splitting the quadrilaterals of such a mesh.

The order $p$ predicted by Theorem~\ref{thm:convergence} is the one of the pointwise accuracy estimate~\eqref{eq:accuracy} for the assembled operator, which differs from the nominal order of the building blocks in two of the classes above.
For the single periodic FD block, every row uses the interior stencil, so $p$ is the order of the operator, i.e., $2$, $4$, or $6$.
For the element-based operators of degree $k$ --- DGSEM, CGSEM, and the triangular SBP operators --- the assembled operator differentiates polynomials of degree $k$ exactly, so $p = k$.
For the multi-block FD operators, however, $p$ is limited by the closures at the block boundaries:
if the closures of a diagonal-norm SBP operator are exact for polynomials of degree $\tau$, its interior order is necessarily at least $2\tau$ \cite{kreiss1974finite}, and this holds irrespective of the node distribution near the boundary \cite[Theorem~9]{linders2018order};
the standard operator families are constructed with exactly this pairing \cite{strand1994summation,mattsson2018boundary}.
Hence, the estimate \eqref{eq:accuracy}, being uniform in the node index, holds with $p$ equal to half the interior order rather than the interior order itself.
The restriction is a consequence of the diagonal norm required by Assumption~\ref{asm:sbp}~\ref{item:sbp-norm};
with a full (block) norm, closures of one order below the interior order are attainable \cite{kreiss1977existence}, at the price of losing the diagonal structure on which the analysis of Section~\ref{sec:proof} rests.
For the two multi-block operators used below, of interior orders $4$ and $6$, Theorem~\ref{thm:convergence} therefore only guarantees the rates $p = 2$ and $p = 3$.

\subsection{A scalar conservation law}
\label{sec:numerics-burgers}

We first take the inviscid Burgers equation $\partial_t u + \partial_x u^2 / 2 = s$ with its quadratic entropy $U(u) = u^2/2$ and the manufactured smooth solution $u(t,x) = 2 + \sin(2\pi(x-t))$ on the periodic interval $\Omega = [0,1]$, driven by the corresponding state-independent source $s$;
this solution and source constitute the convergence test implemented in Trixi.jl.
All runs of this section use the final time $T = 0.5$.
Table~\ref{tab:burgers-fdsbp} shows that the single periodic finite-difference SBP block reproduces its design orders $2$, $4$, and $6$ cleanly under node refinement.

\begin{table}[!htb]
  \centering
  \caption{Burgers 1D, EC flux differencing, single periodic FD-SBP block of orders 2, 4, 6, refined in the node count. $L^2$ ($M$-norm) error and experimental order of convergence (EOC).}
  \label{tab:burgers-fdsbp}
  \begin{tabular}{cccccccc}
    \toprule
    $h$ & $N_n$ & \multicolumn{2}{c}{$\text{order 2}$} & \multicolumn{2}{c}{$\text{order 4}$} & \multicolumn{2}{c}{$\text{order 6}$} \\
     &  & $L^2$ & EOC & $L^2$ & EOC & $L^2$ & EOC \\
    \midrule
    3.125e-02 & 32 & 2.432e-02 & --- & 3.199e-04 & --- & 1.135e-05 & --- \\
    1.562e-02 & 64 & 6.147e-03 & 1.98 & 2.111e-05 & 3.92 & 1.870e-07 & 5.92 \\
    7.812e-03 & 128 & 1.519e-03 & 2.02 & 1.321e-06 & 4.00 & 2.942e-09 & 5.99 \\
    3.906e-03 & 256 & 3.775e-04 & 2.01 & 8.259e-08 & 4.00 & 4.651e-11 & 5.98 \\
    \bottomrule
  \end{tabular}
\end{table}

Table~\ref{tab:burgers-fdsbp-multiblock} instead refines the number of coupled blocks at a fixed number of nodes per block.
The observed orders, $3$ and $4$, are neither the interior orders $4$ and $6$ --- these are attained only by the single periodic block, which has no closures --- nor the rates $p = 2$ and $p = 3$ guaranteed by Theorem~\ref{thm:convergence} for these operators:
they exceed the guaranteed rates by exactly one order.
This gain is what the classical linear theory predicts.
For first-order hyperbolic problems, boundary closures may be one order less accurate than the interior stencil without reducing the global order \cite{gustafsson1975convergence,svard2006order};
for energy-stable, nullspace-consistent, and nullspace-invariant schemes with an interior truncation error of order $2p$ and a boundary truncation error of order $p$, the convergence rate is $\min(2p, p+1) = p+1$ \cite{svard2019convergence}, which carries over to domains split into several blocks coupled across interfaces \cite{svard2020convergence} --- exactly the setting of this table.
The relative-entropy argument does not capture this extra order, since it uses the accuracy estimate \eqref{eq:accuracy} uniformly in the node index, where the closure rows dominate;
we return to this point in Section~\ref{sec:summary}.

\begin{table}[!htb]
  \centering
  \caption{Burgers 1D, EC flux differencing, multi-block FD-SBP operators of interior orders 4 and 6, refined in the number of blocks at fixed nodes per block ($L^2$ error and EOC).}
  \label{tab:burgers-fdsbp-multiblock}
  \begin{tabular}{cccccc}
    \toprule
    $h$ & $N_e$ & \multicolumn{2}{c}{$\text{interior order 4}$} & \multicolumn{2}{c}{$\text{interior order 6}$} \\
     &  & $L^2$ & EOC & $L^2$ & EOC \\
    \midrule
    1.250e-01 & 8 & 1.655e-05 & --- & 1.924e-07 & --- \\
    6.250e-02 & 16 & 1.633e-06 & 3.34 & 6.539e-09 & 4.88 \\
    3.125e-02 & 32 & 2.031e-07 & 3.01 & 4.020e-10 & 4.02 \\
    1.562e-02 & 64 & 2.567e-08 & 2.98 & 2.211e-11 & 4.18 \\
    \bottomrule
  \end{tabular}
\end{table}

The element-based DGSEM and CGSEM discretizations of degrees $k \in \{1, 2, 3, 4\}$ (Tables~\ref{tab:burgers-dgsem} and \ref{tab:burgers-cgsem}) converge at rates consistent with the operator order;
the observed values fluctuate between roughly $k$ and $k+1$.
Rigorous results on the influence of the numerical flux on the attainable rate for DG methods are available for \emph{linear} hyperbolic problems, where the parity of the degree matters:
for central fluxes on uniform meshes, the optimal rate $k+1$ holds for even $k$, while only the sub-optimal rate $k$ can be guaranteed for odd $k$, and on non-uniform meshes the rate drops to $k$ for both parities;
these estimates are sharp \cite{liu2021suboptimal}, and upwind-biased fluxes restore the optimal rate for every degree \cite{meng2016optimal}.
Our DGSEM rates on uniform meshes follow the same pattern --- close to $k$ for the odd degrees $k = 1, 3$ and close to $k+1$ for the even degrees $k = 2, 4$.
For CG discretizations of first-order hyperbolic problems, the classical results have a similar flavor:
the standard method does not attain the optimal rate for every approximation space --- Dupont's example loses one order for Hermite cubics \cite{dupont1973galerkin} --- and modified Galerkin schemes are needed to recover the optimal rate for all admissible spaces \cite{dendy1974two}.
We are not aware of a comparable theory predicting the rate-optimality and parity behavior for the nonlinear entropy-conservative discretizations studied here.

\begin{table}[!htb]
  \centering
  \caption{Burgers 1D, EC flux differencing, DGSEM of degrees $k = 1, 2, 3, 4$ ($L^2$ error and EOC).}
  \label{tab:burgers-dgsem}
  \begin{tabular}{cccccccccc}
    \toprule
    $h$ & $N_e$ & \multicolumn{2}{c}{$k=1$} & \multicolumn{2}{c}{$k=2$} & \multicolumn{2}{c}{$k=3$} & \multicolumn{2}{c}{$k=4$} \\
     &  & $L^2$ & EOC & $L^2$ & EOC & $L^2$ & EOC & $L^2$ & EOC \\
    \midrule
    1.250e-01 & 8 & 3.523e-01 & --- & 1.662e-02 & --- & 6.513e-04 & --- & 3.836e-05 & --- \\
    6.250e-02 & 16 & 1.687e-01 & 1.06 & 7.456e-04 & 4.48 & 3.552e-05 & 4.20 & 1.017e-06 & 5.24 \\
    3.125e-02 & 32 & 8.347e-02 & 1.02 & 1.074e-04 & 2.79 & 9.758e-06 & 1.86 & 2.505e-08 & 5.34 \\
    1.562e-02 & 64 & 4.163e-02 & 1.00 & 1.175e-05 & 3.19 & 1.421e-06 & 2.78 & 7.722e-10 & 5.02 \\
    \bottomrule
  \end{tabular}
\end{table}

\begin{table}[!htb]
  \centering
  \caption{Burgers 1D, EC flux differencing, CGSEM operators of degrees $k = 1, 2, 3, 4$ ($L^2$ error and EOC).}
  \label{tab:burgers-cgsem}
  \begin{tabular}{cccccccccc}
    \toprule
    $h$ & $N_e$ & \multicolumn{2}{c}{$k=1$} & \multicolumn{2}{c}{$k=2$} & \multicolumn{2}{c}{$k=3$} & \multicolumn{2}{c}{$k=4$} \\
     &  & $L^2$ & EOC & $L^2$ & EOC & $L^2$ & EOC & $L^2$ & EOC \\
    \midrule
    1.250e-01 & 8 & 2.994e-01 & --- & 1.080e-02 & --- & 5.118e-04 & --- & 3.484e-05 & --- \\
    6.250e-02 & 16 & 8.866e-02 & 1.76 & 3.852e-03 & 1.49 & 2.638e-05 & 4.28 & 2.935e-06 & 3.57 \\
    3.125e-02 & 32 & 2.432e-02 & 1.87 & 1.034e-03 & 1.90 & 1.300e-06 & 4.34 & 5.415e-08 & 5.76 \\
    1.562e-02 & 64 & 6.147e-03 & 1.98 & 2.562e-04 & 2.01 & 8.484e-08 & 3.94 & 1.639e-09 & 5.05 \\
    \bottomrule
  \end{tabular}
\end{table}

The CGSEM operator of degree $k = 1$ is a degenerate member of the element-based class:
coupling the two-node Legendre--Gauss--Lobatto element operators continuously on a uniform mesh reproduces exactly the classical second-order central FD SBP operator, with the mass matrix $M = h \, \I$ on a periodic domain \cite[Example~2.1 and Corollary~2.2]{ranocha2021broad}.
Its column in Table~\ref{tab:burgers-cgsem} therefore agrees with the order-$2$ column of Table~\ref{tab:burgers-fdsbp} in every printed digit at the two mesh sizes the two tables have in common, and it converges at the rate $2$ of that FD operator instead of following the pattern of the continuous couplings.

\subsection{The compressible Euler equations in one space dimension}
\label{sec:numerics-euler1d}

For the compressible Euler equations with $\gamma = 1.4$, we use the entropy-conservative flux \eqref{eq:flux-ranocha} of \cite{ranocha2018comparison,ranocha2021preventing}.
Table~\ref{tab:euler1d-dw-fdsbp} reports the exact, source-free density-wave solution $\rho = 1 + 0.2\sin(2\pi(x - v_1 t))$ with velocity $v_1 = 0.1$ and constant pressure $p = 20$ on the periodic interval $\Omega = [-1,1]$ up to the final time $T = 1$, for the single-block FD SBP operators of orders $2$, $4$, and $6$ refined from $N_n = 16$ to $N_n = 128$ nodes.
This is similar to the density-wave setup of \cite{gassner2022stability} implemented in Trixi.jl.

\begin{table}[!htb]
  \centering
  \caption{Euler 1D density wave, EC flux differencing, periodic FD-SBP orders 2, 4, 6 (density-component $L^2$ error and EOC).}
  \label{tab:euler1d-dw-fdsbp}
  \begin{tabular}{cccccccc}
    \toprule
    $h$ & $N_n$ & \multicolumn{2}{c}{$\text{order 2}$} & \multicolumn{2}{c}{$\text{order 4}$} & \multicolumn{2}{c}{$\text{order 6}$} \\
     &  & $L^2$ & EOC & $L^2$ & EOC & $L^2$ & EOC \\
    \midrule
    1.250e-01 & 16 & 8.852e-03 & --- & 1.109e-03 & --- & 2.398e-04 & --- \\
    6.250e-02 & 32 & 2.266e-03 & 1.97 & 7.556e-05 & 3.88 & 5.660e-06 & 5.40 \\
    3.125e-02 & 64 & 5.698e-04 & 1.99 & 4.836e-06 & 3.97 & 1.008e-07 & 5.81 \\
    1.562e-02 & 128 & 1.427e-04 & 2.00 & 3.041e-07 & 3.99 & 1.632e-09 & 5.95 \\
    \bottomrule
  \end{tabular}
\end{table}

The multi-block FD SBP operators of interior orders $4$ and $6$ (Table~\ref{tab:euler1d-dw-fdsbp-multiblock}) again converge one order above the closure-limited rates $p = 2$ and $p = 3$ guaranteed by Theorem~\ref{thm:convergence}, as in Section~\ref{sec:numerics-burgers}.

\begin{table}[!htb]
  \centering
  \caption{Euler 1D density wave, EC flux differencing, multi-block FD-SBP operators of interior orders 4 and 6, refined in the number of blocks (density-component $L^2$ error and EOC).}
  \label{tab:euler1d-dw-fdsbp-multiblock}
  \begin{tabular}{cccccc}
    \toprule
    $h$ & $N_e$ & \multicolumn{2}{c}{$\text{interior order 4}$} & \multicolumn{2}{c}{$\text{interior order 6}$} \\
     &  & $L^2$ & EOC & $L^2$ & EOC \\
    \midrule
    5.000e-01 & 4 & 1.652e-04 & --- & 3.653e-06 & --- \\
    2.500e-01 & 8 & 2.150e-05 & 2.94 & 2.765e-07 & 3.72 \\
    1.250e-01 & 16 & 2.084e-06 & 3.37 & 1.665e-08 & 4.05 \\
    6.250e-02 & 32 & 2.792e-07 & 2.90 & 8.520e-10 & 4.29 \\
    \bottomrule
  \end{tabular}
\end{table}

The element-based CGSEM operators of degrees $k \in \{1, 2, 3, 4\}$ on $8$ to $64$ elements attain at least the predicted rate for the density wave (Table~\ref{tab:euler1d-dw-cgsem}).

\begin{table}[!htb]
  \centering
  \caption{Euler 1D density wave, EC flux differencing, CGSEM operators of degrees $k = 1, 2, 3, 4$ (density-component $L^2$ error and EOC).}
  \label{tab:euler1d-dw-cgsem}
  \begin{tabular}{cccccccccc}
    \toprule
    $h$ & $N_e$ & \multicolumn{2}{c}{$k=1$} & \multicolumn{2}{c}{$k=2$} & \multicolumn{2}{c}{$k=3$} & \multicolumn{2}{c}{$k=4$} \\
     &  & $L^2$ & EOC & $L^2$ & EOC & $L^2$ & EOC & $L^2$ & EOC \\
    \midrule
    2.500e-01 & 8 & 3.208e-02 & --- & 9.909e-03 & --- & 1.234e-03 & --- & 1.109e-04 & --- \\
    1.250e-01 & 16 & 8.852e-03 & 1.86 & 2.693e-03 & 1.88 & 8.351e-05 & 3.89 & 6.312e-06 & 4.14 \\
    6.250e-02 & 32 & 2.266e-03 & 1.97 & 6.877e-04 & 1.97 & 3.708e-06 & 4.49 & 3.973e-07 & 3.99 \\
    3.125e-02 & 64 & 5.698e-04 & 1.99 & 1.728e-04 & 1.99 & 1.899e-07 & 4.29 & 2.486e-08 & 4.00 \\
    \bottomrule
  \end{tabular}
\end{table}

Table~\ref{tab:euler1d-manufactured} adds a fully nonlinear cross-check on the manufactured solution $\rho = \rho v_1 = 2 + 0.1 \sin(\pi (x - t))$, $\rho e = \rho^2$ on $\Omega = [0,2]$ up to $T = 0.5$, with all state components varying and the corresponding state-independent source;
this is the one-dimensional member of the family \eqref{eq:euler2d-manufactured} implemented in Trixi.jl.
The periodic finite-difference blocks of orders $2$, $4$, and $6$ attain their orders on it as well.
We do not show results for the other classes of methods here, since they behave similarly to the 2D case of Section~\ref{sec:numerics-2d} below.

\begin{table}[!htb]
  \centering
  \caption{Euler 1D manufactured solution (nonlinear), EC flux differencing, periodic FD-SBP orders 2, 4, 6 (density-component $L^2$ error and EOC).}
  \label{tab:euler1d-manufactured}
  \begin{tabular}{cccccccc}
    \toprule
    $h$ & $N_n$ & \multicolumn{2}{c}{$\text{order 2}$} & \multicolumn{2}{c}{$\text{order 4}$} & \multicolumn{2}{c}{$\text{order 6}$} \\
     &  & $L^2$ & EOC & $L^2$ & EOC & $L^2$ & EOC \\
    \midrule
    1.250e-01 & 16 & 2.972e-03 & --- & 9.725e-05 & --- & 5.725e-06 & --- \\
    6.250e-02 & 32 & 7.447e-04 & 2.00 & 6.196e-06 & 3.97 & 9.628e-08 & 5.89 \\
    3.125e-02 & 64 & 1.863e-04 & 2.00 & 3.892e-07 & 3.99 & 1.533e-09 & 5.97 \\
    1.562e-02 & 128 & 4.657e-05 & 2.00 & 2.436e-08 & 4.00 & 2.444e-11 & 5.97 \\
    \bottomrule
  \end{tabular}
\end{table}

\subsection{The shallow water equations}
\label{sec:numerics-sw}

For the shallow water equations we use the entropy-conservative flux of Wintermeyer et al.\ \cite{wintermeyer2017entropy} implemented in TrixiShallowWater.jl \cite{winters2025trixi}, with a flat bottom, so that the topography term vanishes and the source is state independent, matching the scope of the analysis.
The gravitational acceleration is $g = 9.81$, and the manufactured solution on the periodic square $\Omega = [0,1]^2$ is
\begin{equation}
\label{eq:sw-manufactured}
\begin{aligned}
  H(t, x, y) &= 2 + 0.2 \sin\bigl(2\pi(x - v_1 t)\bigr) + 0.15 \sin\bigl(2\pi(y - v_2 t)\bigr),
  \\
  v &= (v_1, v_2) = (0.25, 0.2),
\end{aligned}
\end{equation}
with the state-independent source $s = (0, g H \partial_x H, g H \partial_y H)$ evaluated on the exact solution.
The final time is $T = 0.25$ and the operators have degrees $k \in \{2, 3, 4\}$.
Table~\ref{tab:sw2d-quad} reports this solution on Cartesian DGSEM elements;
the same experiment on triangular SBP elements converges at comparable rates.

\begin{table}[!htb]
  \centering
  \caption{Shallow water 2D, flat bathymetry, EC flux differencing, DGSEM (Quad) operators of degrees $k = 2, 3, 4$ (water-height $L^2$ error and EOC).}
  \label{tab:sw2d-quad}
  \begin{tabular}{cccccccc}
    \toprule
    $h$ & $N_e$ & \multicolumn{2}{c}{$k=2$} & \multicolumn{2}{c}{$k=3$} & \multicolumn{2}{c}{$k=4$} \\
     &  & $L^2$ & EOC & $L^2$ & EOC & $L^2$ & EOC \\
    \midrule
    2.500e-01 & 4 & 1.334e-02 & --- & 1.593e-03 & --- & 9.214e-05 & --- \\
    1.250e-01 & 8 & 1.367e-03 & 3.29 & 1.616e-04 & 3.30 & 2.017e-06 & 5.51 \\
    6.250e-02 & 16 & 1.509e-04 & 3.18 & 1.878e-05 & 3.11 & 5.990e-08 & 5.07 \\
    \bottomrule
  \end{tabular}
\end{table}

\subsection{Two space dimensions, general operators, and curved grids}
\label{sec:numerics-2d}

The two-dimensional experiments use the Euler manufactured solution
\begin{equation}
\label{eq:euler2d-manufactured}
  \rho = \rho v_1 = \rho v_2 = 2 + 0.1 \sin\bigl(\pi (x + y - t)\bigr),
  \qquad
  \rho e = \rho^2,
\end{equation}
with $\gamma = 1.4$ and the corresponding state-independent source, on the periodic square $\Omega = [-1,1]^2$ up to the final time $T = 0.4$.
It is a variant of the manufactured solution of \cite[Section~3.1.1]{schlottkelakemper2021purely}, which uses the same density ansatz and the same constant velocity $v = (1,1)$ but the thermodynamic closure $\gamma = 2$, $p = \rho^2 / \pi$;
the form \eqref{eq:euler2d-manufactured} is the one implemented in Trixi.jl.

On Cartesian tensor-product elements we use DGSEM and CGSEM operators and tensor products of periodic finite-difference SBP operators.
The FD operators attain their orders cleanly (Table~\ref{tab:euler2d-fdsbp}), as in one space dimension, whereas the element-based operators (Tables~\ref{tab:euler2d-dgsem} and \ref{tab:euler2d-cgsem}) again converge at rates between roughly $k$ and $k+1$;
the DGSEM rates show the same dependence on the parity of $k$ as in Section~\ref{sec:numerics-burgers}.
The identification of the degree-one CGSEM operator with the second-order finite-difference operator persists under tensor products, and indeed the $k = 1$ column of Table~\ref{tab:euler2d-cgsem} reproduces the order-$2$ column of Table~\ref{tab:euler2d-fdsbp} at the common mesh sizes.

\begin{table}[!htb]
  \centering
  \caption{Euler 2D manufactured solution, EC flux differencing, tensor products of periodic FD-SBP operators of orders 2, 4, 6; density $L^2$ error and EOC.}
  \label{tab:euler2d-fdsbp}
  \begin{tabular}{cccccccc}
    \toprule
    $h$ & $N_n$ & \multicolumn{2}{c}{$\text{order 2}$} & \multicolumn{2}{c}{$\text{order 4}$} & \multicolumn{2}{c}{$\text{order 6}$} \\
     &  & $L^2$ & EOC & $L^2$ & EOC & $L^2$ & EOC \\
    \midrule
    2.500e-01 & 8 & 1.113e-02 & --- & 1.318e-03 & --- & 3.604e-04 & --- \\
    1.250e-01 & 16 & 2.517e-03 & 2.14 & 9.046e-05 & 3.87 & 7.989e-06 & 5.50 \\
    6.250e-02 & 32 & 6.107e-04 & 2.04 & 5.822e-06 & 3.96 & 1.371e-07 & 5.86 \\
    \bottomrule
  \end{tabular}
\end{table}

\begin{table}[!htb]
  \centering
  \caption{Euler 2D manufactured solution, EC flux differencing, DGSEM of degrees $k = 1, 2, 3, 4$ on Cartesian \texttt{Quad} elements (density $L^2$ error and EOC).}
  \label{tab:euler2d-dgsem}
  \begin{tabular}{cccccccccc}
    \toprule
    $h$ & $N_e$ & \multicolumn{2}{c}{$k=1$} & \multicolumn{2}{c}{$k=2$} & \multicolumn{2}{c}{$k=3$} & \multicolumn{2}{c}{$k=4$} \\
     &  & $L^2$ & EOC & $L^2$ & EOC & $L^2$ & EOC & $L^2$ & EOC \\
    \midrule
    5.000e-01 & 4 & 4.948e-02 & --- & 9.861e-03 & --- & 1.305e-03 & --- & 1.285e-04 & --- \\
    2.500e-01 & 8 & 2.562e-02 & 0.95 & 1.042e-03 & 3.24 & 1.999e-04 & 2.71 & 4.614e-06 & 4.80 \\
    1.250e-01 & 16 & 1.301e-02 & 0.98 & 1.129e-04 & 3.21 & 2.641e-05 & 2.92 & 8.053e-08 & 5.84 \\
    \bottomrule
  \end{tabular}
\end{table}

\begin{table}[!htb]
  \centering
  \caption{Euler 2D manufactured solution, EC flux differencing, tensor-product CGSEM operators of degrees $k = 1, 2, 3, 4$; density $L^2$ error and EOC.}
  \label{tab:euler2d-cgsem}
  \begin{tabular}{cccccccccc}
    \toprule
    $h$ & $N_e$ & \multicolumn{2}{c}{$k=1$} & \multicolumn{2}{c}{$k=2$} & \multicolumn{2}{c}{$k=3$} & \multicolumn{2}{c}{$k=4$} \\
     &  & $L^2$ & EOC & $L^2$ & EOC & $L^2$ & EOC & $L^2$ & EOC \\
    \midrule
    5.000e-01 & 4 & 5.445e-02 & --- & 6.515e-03 & --- & 1.171e-03 & --- & 1.644e-04 & --- \\
    2.500e-01 & 8 & 1.113e-02 & 2.29 & 1.928e-03 & 1.76 & 7.104e-05 & 4.04 & 1.083e-05 & 3.92 \\
    1.250e-01 & 16 & 2.517e-03 & 2.14 & 5.136e-04 & 1.91 & 3.098e-06 & 4.52 & 6.827e-07 & 3.99 \\
    \bottomrule
  \end{tabular}
\end{table}

The triangular (simplex) SBP operators of Table~\ref{tab:euler2d-tri} (degrees $k \in \{1, 2, 3, 4\}$, $4 \times 4$ to $16 \times 16$ elements) are genuinely multidimensional and non-tensor-product, and exercise the general operator framework of Section~\ref{sec:setting} rather than only tensor products; they converge at least at the operator order $p = k$.
The degree $k = 1$ deserves a separate comment:
it corresponds to $p = 1$, which violates the condition $p > d/2$ of Assumption~\ref{asm:order} in two space dimensions, so Theorem~\ref{thm:convergence} does not apply to it.
The degree-one columns of Tables~\ref{tab:euler2d-dgsem} and \ref{tab:euler2d-tri} nevertheless converge at the operator order, which supports the view taken in Section~\ref{ssec:order-condition} that the condition is a technical artifact of the $L^\infty$ bootstrap rather than a genuine obstruction.

\begin{table}[!htb]
  \centering
  \caption{Euler 2D manufactured solution, EC flux differencing on triangular (simplex) SBP elements of degrees $k = 1, 2, 3, 4$ (density $L^2$ error and EOC).}
  \label{tab:euler2d-tri}
  \begin{tabular}{cccccccccc}
    \toprule
    $h$ & $N_e$ & \multicolumn{2}{c}{$k=1$} & \multicolumn{2}{c}{$k=2$} & \multicolumn{2}{c}{$k=3$} & \multicolumn{2}{c}{$k=4$} \\
     &  & $L^2$ & EOC & $L^2$ & EOC & $L^2$ & EOC & $L^2$ & EOC \\
    \midrule
    5.000e-01 & 4 & 1.318e-01 & --- & 3.403e-02 & --- & 6.314e-03 & --- & 1.259e-03 & --- \\
    2.500e-01 & 8 & 7.596e-02 & 0.80 & 3.996e-03 & 3.09 & 6.487e-04 & 3.28 & 5.508e-05 & 4.51 \\
    1.250e-01 & 16 & 3.812e-02 & 0.99 & 7.183e-04 & 2.48 & 7.697e-05 & 3.08 & 2.396e-06 & 4.52 \\
    \bottomrule
  \end{tabular}
\end{table}

Finally, we place the same solution on the smoothly curved periodic mesh of Figure~\ref{fig:curved-mesh}, obtained by warping the reference square with
\begin{equation}
\label{eq:curved-mapping}
  x = \xi + 0.1 \sin(\pi \xi) \sin(\pi \eta),
  \quad
  y = \eta + 0.1 \sin(\pi \xi) \sin(\pi \eta),
  \qquad
  (\xi, \eta) \in [-1,1]^2,
\end{equation}
which leaves the boundaries of the reference square invariant so that opposite edges still match, and discretize it with DGSEM.
The mapping \eqref{eq:curved-mapping} is used in Trixi.jl;
smooth sinusoidal warpings of this kind are a standard test setting for curvilinear entropy-stable discretizations, cf.\ the related warping in \cite[Section~4]{wu2021high}.
The manufactured solution converges as usual (Table~\ref{tab:euler2d-curved}).

\input{code/results/curved_mesh.tex}

\begin{table}[!htb]
  \centering
  \caption{Euler 2D manufactured solution on a smoothly curved periodic grid, EC flux differencing, DGSEM of degrees $k = 2, 3, 4$ (density $L^2$ error and EOC).}
  \label{tab:euler2d-curved}
  \begin{tabular}{cccccccc}
    \toprule
    $h$ & $N_e$ & \multicolumn{2}{c}{$k=2$} & \multicolumn{2}{c}{$k=3$} & \multicolumn{2}{c}{$k=4$} \\
     &  & $L^2$ & EOC & $L^2$ & EOC & $L^2$ & EOC \\
    \midrule
    5.000e-01 & 4 & 2.603e-02 & --- & 3.187e-03 & --- & 1.257e-03 & --- \\
    2.500e-01 & 8 & 4.314e-03 & 2.59 & 3.990e-04 & 3.00 & 7.110e-05 & 4.14 \\
    1.250e-01 & 16 & 4.755e-04 & 3.18 & 4.111e-05 & 3.28 & 1.425e-06 & 5.64 \\
    \bottomrule
  \end{tabular}
\end{table}

\section{Summary and discussion}
\label{sec:summary}

Using a semidiscrete relative entropy method, we have proved that entropy-conservative flux differencing semidiscretizations of periodic hyperbolic conservation laws in $d$ space dimensions converge to smooth solutions (at least) at the order of accuracy of the underlying diagonal-norm SBP operators, for quite general symmetrizable systems with strictly convex entropy and arbitrary bounded state-independent source terms $s(t,x)$.
The abstract SBP operator framework covers tensor-product grids as well as smooth curved periodic grids, verified in two and in three space dimensions in Appendix~\ref{app:encapsulated}, where the zero-row-sum assumption is exactly the discrete metric identity.
The applications include Burgers' equation, the shallow water equations and the compressible Euler equations.
We close by discussing some aspects of the result.

\emph{Why the fixed-local-stencil condition is essential.}
The only genuinely structural hypothesis beyond entropy conservation is the remainder bound of Assumption~\ref{asm:remainder}, which we verified under the fixed-local-stencil condition of Assumption~\ref{asm:bandwidth}.
The mechanism of Lemma~\ref{lem:bandwidth} is that $|\wsol_i - \wsol_j| = \O(h)$ on the stencil compensates exactly the mismatch between the row sums of order $h^{d-1}$ and the masses of order $h^d$.
For dense operators, such as single-domain Fourier or polynomial collocation methods, $|\wsol_i - \wsol_j| = \O(1)$ on the stencil, and the same estimate only yields a Gronwall constant of order $h^{-1}$, which is useless.
See, e.g., \cite{tadmor1989convergence} for related convergence results including a spectral vanishing viscosity.

\emph{No nullspace condition is needed.}
Purely entropy-conservative schemes do not damp spurious modes in the nullspace of $D$, such as the highest-frequency sawtooth mode of central operators on grids with an even number of nodes, whose symbol $\mathrm{i} \sin(\omega h) / h$ vanishes at the grid frequency $\omega h = \pi$ \cite[Section~1.1]{gustafsson2013time}.
One might expect that a nullspace-consistency hypothesis in the spirit of \cite{svard2019convergence} is necessary to prevent error growth in such modes.
It is not:
the coercivity of Lemma~\ref{lem:coercivity} comes from the convexity of the entropy and controls \emph{all} modes of the error, a spurious initial kernel mode is limited to size $\O(h^p)$ in the $M$-norm by the consistent initial data of Assumption~\ref{asm:initial-data}, and a kernel mode forced by the truncation error is controlled by the Gronwall estimate like any other error contribution.
Assumption~\ref{asm:initial-data} does not exclude an initial kernel component; it only makes it small enough not to affect the rate, and it is absent altogether for nodal sampling or under an additional orthogonality condition on the initial error.

\emph{Entropy-dissipative variants.}
We analyzed purely conservative schemes because they isolate the entropy structure in its sharpest form.
Adding entropy dissipation, e.g., interface dissipation in DG methods, adds a term to the evolution of $E_h$ that consists of the dissipation applied to the numerical solution, which has a favorable sign for the total entropy, plus consistency terms involving the smooth solution.
Making this precise requires bounding the dissipation operator applied to smooth data, which depends on the specific form of the dissipation;
we leave a rigorous treatment for future work.

\emph{Rate optimality and superconvergence.}
Theorem~\ref{thm:convergence} bounds the error by the truncation order $p$ of the assembled operator, which is the order of the pointwise estimate \eqref{eq:accuracy}, uniformly in the node index;
numerical experiments frequently show higher rates.
Two mechanisms are visible in Section~\ref{sec:numerics}.
For element-based methods, the observed rates exceed the guaranteed $p = k$ and approach the optimal rate $k+1$, following the parity pattern known for central-flux DG methods applied to linear problems \cite{liu2021suboptimal}.
For multi-block finite-difference operators, the low-order rows are confined to the closures at the block interfaces, and the classical linear theory converts an interior order $2p$ together with closures of order $p$ into the global rate $\min(2p, p+1) = p+1$ \cite{gustafsson1975convergence,svard2006order,svard2019convergence,svard2020convergence};
our experiments show the same gain for the nonlinear entropy-conservative schemes.
An estimate that is sensitive to the structure of the truncation error near the closures, rather than to its maximum over all nodes, should therefore be able to recover the additional order;
whether a relative-entropy argument can do so, or capture improved rates such as $p + 1/2$ in the spirit of the classical results for dissipative discontinuous Galerkin schemes \cite{johnson1986analysis,peterson1991note,zhang2004error}, is open.

\emph{The order condition.}
The condition $p > d/2$ is analyzed in Section~\ref{ssec:order-condition}:
it enters the proof at a single point, excludes only first-order methods for $d \in \{2, 3\}$, disappears entirely under globally bounded data (Proposition~\ref{prop:global-bounds}), and can be bypassed by an a~posteriori admissibility check (Proposition~\ref{prop:aposteriori}).
Whether it can be removed unconditionally, e.g.\ along the corrector route sketched at the end of Section~\ref{ssec:order-condition}, is open.

\emph{Further extensions.}
Several restrictions of our setting mark natural directions for future work.
On curved meshes, Appendix~\ref{app:encapsulated} casts practical metric-averaged flux differencing schemes into our framework.
For the conservative curl form \eqref{eq:discrete-metrics}, all conditions of Definition~\ref{def:admissible-metrics} are verified in Lemma~\ref{lem:encapsulated-construction}, so Theorem~\ref{thm:convergence} applies to that scheme without further hypotheses.
An implementation using different discrete metric terms still needs its own verification:
the identities~\ref{item:metric-identity} coincide with the discrete metric identities it must satisfy for entropy conservation anyway, but the accuracy of its particular metric and Jacobian terms --- conditions~\ref{item:metric-accuracy} and \ref{item:metric-jacobian}, the latter unless its Jacobian construction makes it follow from \ref{item:metric-accuracy} --- has to be checked case by case, which we do not do here.
For source terms, the present analysis covers exactly collocated, state-independent sources;
state-dependent sources, source discretizations by interpolation or quadrature, and entropy-compatible two-point source forms --- such as well-balanced discretizations of the shallow water equations with bottom topography --- require additional estimates and are left for future work.
Bounded domains with entropy-stable boundary treatments \cite{svard2021entropy,svard2025entropy} introduce boundary terms in the relative-entropy balance that require separate estimates.
Fully discrete schemes could be analyzed by combining the present spatial estimates with entropy-preserving time integration, e.g., relaxation Runge-Kutta methods \cite{ketcheson2019relaxation,ranocha2020relaxation,ranocha2020general}.

\appendix

\section{Explicit constants for the Euler equations}
\label{app:euler-constants}

This appendix provides explicit, sufficient (non-sharp) expressions for the constants entering Corollary~\ref{cor:euler}:
the entropy Hessian bounds $c_U, C_U$ of \eqref{eq:hessian-bounds} and the regularity of the logarithmic mean underlying the flux constant $C_S$ of \eqref{eq:remainder-bound}.
Throughout, $K$ is as in \eqref{eq:euler-K} with induced bounds
\begin{equation}
  \rho \in [\rho_{\min}, \rho_{\max}],
  \qquad
  |v| \le V,
  \qquad
  p \in [p_{\min}, p_{\max}],
  \qquad
  \rho_{\min}, p_{\min} > 0.
\end{equation}

\subsection{Entropy Hessian bounds}
\label{app:hessian}

Closed-form expressions for the entropy Hessian $U''$ of the compressible Euler equations and for its inverse, the symmetrizer $\partial u / \partial w$, are classical \cite[Appendix~A]{hughes1986new} and \cite{barth1999numerical};
what we need here, and derive below, are explicit bounds on its spectrum on the compact set $K$.
We compute the Hessian of the entropy \eqref{eq:euler-entropy} in primitive coordinates in 1D.
Let $q = (\rho, v, p)$ and let $u(q)$ denote the conserved variables, with Jacobian
\begin{equation}
  J
  =
  \frac{\partial u}{\partial q}
  =
  \begin{pmatrix}
    1 & 0 & 0 \\
    v & \rho & 0 \\
    v^2 / 2 & \rho v & 1 / (\gamma - 1)
  \end{pmatrix},
  \qquad
  \det J = \frac{\rho}{\gamma - 1}.
\end{equation}
Pulling the entropy quadratic form back to primitive perturbations, $\dif u^T U''(u) \dif u = \dif q^T S \dif q$ with $S = J^T U'' J$, a direct computation gives the closed form
\begin{equation}
\label{eq:hessian-primitive}
  S
  =
  \begin{pmatrix}
    \frac{\gamma}{(\gamma - 1) \rho} & 0 & - \frac{1}{(\gamma - 1) p} \\
    0 & \frac{\rho^2}{p} & 0 \\
    - \frac{1}{(\gamma - 1) p} & 0 & \frac{\rho}{(\gamma - 1) p^2}
  \end{pmatrix}.
\end{equation}
The velocity direction decouples with eigenvalue $\rho^2 / p$, and the $(\dif\rho, \dif p)$ block is $M_2 / (\gamma - 1)$ with
\begin{equation}
  M_2
  =
  \begin{pmatrix}
    \gamma / \rho & - 1/p \\
    - 1/p & \rho / p^2
  \end{pmatrix},
  \qquad
  \det M_2 = \frac{\gamma - 1}{p^2} > 0,
\end{equation}
so $S$ is symmetric positive definite on $\mathcal{A}$.

The eigenvalues of $U''(u)$ are the generalized eigenvalues of the pencil $(S, G)$ with $G = J^T J$, since $\abs{\dif u}^2 = \dif q^T G \dif q$.
By Rayleigh quotients,
\begin{equation}
\label{eq:rayleigh}
  \lambda_{\min}(U'')
  \ge
  \frac{\lambda_{\min}(S)}{\lambda_{\max}(G)},
  \qquad
  \lambda_{\max}(U'')
  \le
  \frac{\lambda_{\max}(S)}{\lambda_{\min}(G)}.
\end{equation}
We bound $G$ via Frobenius norms, $\lambda_{\max}(G) \le \|J\|_F^2$ and $\lambda_{\min}(G) \ge \|J^{-1}\|_F^{-2}$, where
\begin{equation}
  J^{-1}
  =
  \begin{pmatrix}
    1 & 0 & 0 \\
    - v / \rho & 1 / \rho & 0 \\
    (\gamma - 1) v^2 / 2 & - (\gamma - 1) v & \gamma - 1
  \end{pmatrix}.
\end{equation}
Maximizing the state-dependent quantities over $K$ yields the explicit constants
\begin{equation}
\label{eq:explicit-G-bounds}
\begin{aligned}
  \Gamma_{\mathrm{up}}
  &=
  1 + V^2 + \rho_{\max}^2 + \frac{V^4}{4} + \rho_{\max}^2 V^2 + \frac{1}{(\gamma - 1)^2}
  \ \ge\ \lambda_{\max}(G),
  \\
  \Lambda_{\mathrm{up}}
  &=
  1 + \frac{1 + V^2}{\rho_{\min}^2} + (\gamma - 1)^2 \Bigl( \frac{V^4}{4} + V^2 + 1 \Bigr)
  \ \ge\ \|J^{-1}\|_F^{2}.
\end{aligned}
\end{equation}
For the eigenvalues of $S$, we use the elementary bounds
\begin{equation}
  \lambda_+(M_2) \le \operatorname{tr}(M_2),
  \qquad
  \lambda_-(M_2)
  =
  \frac{\det(M_2)}{\lambda_+(M_2)}
  \ge
  \frac{\det(M_2)}{\operatorname{tr}(M_2)},
\end{equation}
which give
\begin{equation}
\label{eq:explicit-S-bounds}
\begin{aligned}
  \sigma_{\max}
  &=
  \max \biggl( \frac{\rho_{\max}^2}{p_{\min}}, \ \frac{1}{\gamma - 1} \Bigl( \frac{\gamma}{\rho_{\min}} + \frac{\rho_{\max}}{p_{\min}^2} \Bigr) \biggr)
  \ \ge\ \lambda_{\max}(S),
  \\
  \sigma_{\min}
  &=
  \min \biggl( \frac{\rho_{\min}^2}{p_{\max}}, \ \frac{1}{\gamma p_{\max}^2 / \rho_{\min} + \rho_{\max}} \biggr)
  \ \le\ \lambda_{\min}(S),
\end{aligned}
\end{equation}
where the second expression in $\sigma_{\min}$ uses
$\frac{1}{\gamma - 1} \lambda_-(M_2) \ge \frac{1}{\gamma - 1} \frac{(\gamma - 1)/p^2}{\gamma / \rho + \rho / p^2} = \frac{1}{\gamma p^2 / \rho + \rho}$ and is deliberately independent of $p_{\min}$.
Combining \eqref{eq:rayleigh}--\eqref{eq:explicit-S-bounds}, every $u \in K$ satisfies $\lambda_{\min}(U''(u)) \ge \sigma_{\min}/\Gamma_{\mathrm{up}}$ and $\lambda_{\max}(U''(u)) \le \sigma_{\max} \Lambda_{\mathrm{up}}$, so the explicit constants
\begin{equation}
\label{eq:explicit-hessian-bounds}
  c_U = \frac{\sigma_{\min}}{\Gamma_{\mathrm{up}}} > 0,
  \qquad
  C_U = \sigma_{\max} \Lambda_{\mathrm{up}} < \infty
\end{equation}
are admissible in Assumption~\ref{asm:smooth-solution}: they satisfy $c_U \operatorname{I} \le U''(u) \le C_U \operatorname{I}$ for all $u \in K$.

\begin{remark}[Behavior near vacuum]
\label{rem:vacuum-scaling}
  The bounds \eqref{eq:explicit-hessian-bounds} are sufficient but not sharp.
  Since $\sigma_{\min}$ is independent of $p_{\min}$, the displayed $c_U$ does not deteriorate as $p_{\min} \to 0$;
  on a set of states with bounded densities, velocities, and pressures, the true smallest eigenvalue of $U''$ degenerates only towards the density vacuum $\rho \to 0$.
  Boundedness matters here:
  uniform coercivity is also lost along unbounded states, e.g., $\lambda_{\min}(U'') \to 0$ for $\rho = 1$, $v = 0$, and $p \to \infty$.
  In terms of $\rho_{\min}$, the displayed $c_U$ behaves like $\rho_{\min}^2$, which is not sharp:
  at $v = 0$, the smallest eigenvalue of $U''$ scales like $(\gamma - 1)^2 \rho / (\gamma p^2)$ as $\rho \to 0$, i.e., linearly in $\rho$.
  The displayed $C_U$ grows without bound as $\rho_{\min} \to 0$ or $p_{\min} \to 0$.
  For Theorem~\ref{thm:convergence}, only the finiteness and positivity of the constants on a fixed $K$ away from vacuum are used.
\end{remark}

\subsection{Regularity of the logarithmic mean and the flux constant}
\label{app:logmean}

The logarithmic mean \eqref{eq:logmean} of positive arguments satisfies the integral representation
\begin{equation}
\label{eq:inverse-logmean}
  \frac{1}{\logmean{a}}
  =
  \int_0^1 \frac{\dif t}{t \, a_L + (1 - t) \, a_R},
\end{equation}
as follows from evaluating the elementary integral;
equivalently, the affine substitution $x = t \, a_L + (1 - t) \, a_R$ recovers the logarithmic mean as the $t = -1$ member of the integral family of means of Chen \cite{chen2005means}.
Differentiating under the integral sign yields, for $a_L, a_R > 0$ and all $j, k \ge 0$,
\begin{equation}
\label{eq:logmean-derivative}
  \partial_{a_L}^j \partial_{a_R}^k \frac{1}{\logmean{a}}
  =
  (-1)^{j + k} \, (j + k)! \int_0^1 \frac{t^j (1 - t)^k}{( t a_L + (1 - t) a_R )^{j + k + 1}} \dif t,
\end{equation}
where the sign comes from the $j + k$ differentiations of the denominator and the remaining integral is positive.
Taking absolute values, we obtain for $a_L, a_R \in [a_{\min}, a_{\max}]$ with $a_{\min} > 0$
\begin{equation}
\label{eq:logmean-derivative-bounds}
  \abs[2]{\partial_{a_L}^j \partial_{a_R}^k \frac{1}{\logmean{a}}}
  =
  (j + k)! \int_0^1 \frac{t^j (1 - t)^k}{( t a_L + (1 - t) a_R )^{j + k + 1}} \dif t
  \le
  \frac{j! \, k!}{(j + k + 1) \, a_{\min}^{j + k + 1}},
\end{equation}
where the last step uses the value $j! \, k! / (j + k + 1)! = \int_0^1 t^j (1 - t)^k \dif t$.
The bound \eqref{eq:logmean-derivative-bounds} on the magnitude of the derivative is sharp; equality holds at $a_L = a_R = a_{\min}$.
In particular, $1 / \logmean{a} \ge 1 / a_{\max} > 0$, so $\logmean{a}$ is real-analytic on $(0, \infty)^2$ with derivative bounds up to any fixed order depending only on $a_{\min}$ and $a_{\max}$.

On $K \times K$, the fluxes \eqref{eq:flux-ranocha} are compositions of
arithmetic and product means (polynomial),
logarithmic means of $\rho \in [\rho_{\min}, \rho_{\max}]$ and of $\rho / p \in [\rho_{\min} / p_{\max}, \rho_{\max} / p_{\min}]$,
reciprocals of quantities bounded away from zero,
and the smooth map from conserved to primitive variables.
Combining \eqref{eq:logmean-derivative-bounds} with the chain and product rules therefore yields an explicit upper bound for the flux constant $C_S$ of \eqref{eq:remainder-bound} that is polynomial in
\begin{equation}
  \frac{1}{\rho_{\min}},
  \quad
  \frac{1}{p_{\min}},
  \quad
  \rho_{\max},
  \quad
  p_{\max},
  \quad
  V
\end{equation}
for fixed $\gamma > 1$.
This upper bound grows without bound as $\rho_{\min} \to 0$ or $p_{\min} \to 0$.
For Corollary~\ref{cor:euler}, only the finiteness of $C_S$ on a fixed $K$ away from vacuum is used.

\section{Encapsulated curvilinear operators}
\label{app:encapsulated}

This appendix verifies the operator assumptions of Section~\ref{sec:setting} for global SBP operators on smooth curvilinear periodic grids, constructed by the encapsulation approach of {\AA}lund and Nordstr{\"o}m \cite{alund2019encapsulated}:
the grid transformation is reinterpreted as a transformation of the operators, yielding matrices that approximate the Cartesian derivatives directly on the curved grid.
We treat two and three space dimensions at once.
Section~\ref{app:encapsulated-abstract} builds the operators from abstract discrete metric terms and establishes the structural and accuracy assumptions once, given three properties of those metrics (Definition~\ref{def:admissible-metrics});
Section~\ref{app:encapsulated-construction} then constructs metric terms with these properties from the conservative curl form, in both two and three space dimensions.

The operators themselves and their SBP property are due to \cite[Definition~3 and Proposition~1]{alund2019encapsulated}.
On a periodic domain, the boundary terms of that SBP property are absent, so it reduces to the skew-symmetry required in Assumption~\ref{asm:sbp}~\ref{item:sbp-skew}.
The remaining requirements of Section~\ref{sec:setting} are not addressed in \cite{alund2019encapsulated} and are established here:
the row sums vanish exactly, which is the discrete metric identity of Remark~\ref{rem:gcl} and the reason why the discrete entropy balance is free of spurious production;
the assembled operators satisfy the pointwise accuracy estimate \eqref{eq:accuracy} of order $p$, uniformly in the node index;
and the norm and locality conditions of Assumptions~\ref{asm:sbp}~\ref{item:sbp-norm} and \ref{asm:bandwidth} hold.

\subsection{Encapsulated operators from discrete metric terms}
\label{app:encapsulated-abstract}

Fix the space dimension $d$.
Let $(\hat{M}_{\mathrm{1D}}, \hat{Q}_{\mathrm{1D}})$ be a one-dimensional periodic diagonal-norm SBP operator of order $p$ with mesh size $\hat{h}$, satisfying Assumptions~\ref{asm:sbp} and \ref{asm:bandwidth} with $d = 1$, and form the tensor-product reference operators on the reference torus with coordinates $\xi^1, \dots, \xi^d$,
\begin{equation}
\label{eq:reference-operators}
  \hat{Q}_l
  =
  \hat{M}_{\mathrm{1D}}^{\otimes (l-1)} \otimes \hat{Q}_{\mathrm{1D}} \otimes \hat{M}_{\mathrm{1D}}^{\otimes (d-l)},
  \qquad
  \hat{M} = \hat{M}_{\mathrm{1D}}^{\otimes d},
  \qquad
  \hat{D}_l = \hat{M}^{-1} \hat{Q}_l .
\end{equation}
The $\hat{Q}_l$ are skew-symmetric and annihilate constants ($\hat{Q}_l \vec{1} = \vec{0}$), and the $\hat{D}_l$ inherit the accuracy estimate \eqref{eq:accuracy} of order $p$ in the reference variables, for smooth periodic functions on the reference torus.
Since $\hat{D}_l$ acts as the one-dimensional derivative matrix in the $l$-th tensor factor and as the identity in all others, the derivative matrices commute pairwise, $\hat{D}_l \hat{D}_{l'} = \hat{D}_{l'} \hat{D}_l$.
Let the physical grid be the image of the reference grid under an orientation-preserving diffeomorphism $\Phi$ of the torus homotopic to the identity --- equivalently, its lift differs from the identity by a periodic function --- with Jacobian determinant $J \ge J_{\min} > 0$, and decompose the coordinates into their linear lift and a periodic part,
\begin{equation}
\label{eq:coordinate-lift}
  x_m = \xi^m + \tilde{x}_m,
  \qquad
  \tilde{x}_m \text{ periodic}.
\end{equation}
The analytic contravariant metric terms $J a^l_m = J \, \partial \xi^l / \partial x_m$ provide the chain rule $\partial_{x_m} = J^{-1} \sum_l (J a^l_m) \, \partial_{\xi^l}$ and satisfy the metric identities $\sum_l \partial_{\xi^l}(J a^l_m) = 0$ \cite[Section~6.2]{kopriva2009implementing}.

Given grid functions $\amdisc{l}{m}$ ($l, m \in \{1, \dots, d\}$) and $\Jdisc$ --- the discrete counterparts of the analytic $J a^l_m$ and $J$, constructed in Section~\ref{app:encapsulated-construction} --- the \emph{encapsulated operators} of \cite[Definition~3]{alund2019encapsulated} are
\begin{equation}
\label{eq:encapsulated-Q}
  Q_m
  =
  \frac{1}{2} \sum_{l=1}^{d} \Bigl( \diag(\amdisc{l}{m}) \, \hat{Q}_l + \hat{Q}_l \, \diag(\amdisc{l}{m}) \Bigr),
  \qquad
  M = \diag(\Jdisc) \hat{M},
  \qquad
  D_m = M^{-1} Q_m .
\end{equation}
Entrywise, $(Q_m)_{ij} = \sum_l (\hat{Q}_l)_{ij} \, \mean{\amdisc{l}{m}}_{ij}$ with the arithmetic mean $\mean{\vec{a}}_{ij} = \tfrac12 (a_i + a_j)$, so the encapsulated operator is exactly the metric-averaged flux differencing operator used in practice (Remark~\ref{rem:metric-averaged}).

\begin{definition}[Admissible discrete metric terms]
\label{def:admissible-metrics}
  The grid functions $\amdisc{l}{m}$ and $\Jdisc$ are \emph{admissible of order $p$} if, with analytic metric terms $J a^l_m \in C^{p+1}$,
  \begin{enumerate}[label=(M\arabic*)]
    \item
    \label{item:metric-identity}
    $\sum_{l} \hat{Q}_l \, \amdisc{l}{m} = \vec{0}$ for each $m$ (discrete metric identities);
    \item
    \label{item:metric-jacobian}
    $\Jdiscn_i = J(\xi_i) + \O(\hat{h}^p)$ uniformly in $i$, and $\Jdiscn_i \ge J_{\min}/2$;
    \item
    \label{item:metric-accuracy}
    $\amdisc{l}{m} = \vec{J a}^l_m + \vec{E}^{l}_m$, where the metric errors $\vec{E}^{l}_m$ obey the \emph{uniform} bounds
    \begin{equation}
    \label{eq:metric-accuracy-uniform}
      \|\vec{E}^{l}_m\|_{\ell^\infty} \le C_E \, \hat{h}^p,
      \qquad
      \norm[1]{\hat{D}_l ( \vec{E}^{l}_m \odot \vec{G} )}_{\ell^\infty} \le C_E \, \hat{h}^p \, \|g\|_{C^1}
    \end{equation}
    for every $G = g \circ \Phi$, $g \in C^1$, on the physical torus,
    with a constant $C_E$ independent of $\hat{h}$, of the node index, and of the test function $g$.
  \end{enumerate}
\end{definition}

\begin{lemma}[Structural properties]
\label{lem:encapsulated-structure}
  For arbitrary grid functions $\amdisc{l}{m}$, the operators \eqref{eq:encapsulated-Q} satisfy $Q_m + Q_m^T = 0$.
  If in addition \ref{item:metric-identity} holds, then $Q_m \vec{1} = \vec{0}$ exactly.
\end{lemma}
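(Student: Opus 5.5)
The plan is to work directly from the definition \eqref{eq:encapsulated-Q}, using only two facts about the reference operators: each $\hat{Q}_l$ is skew-symmetric, and each annihilates constants.

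\emph{Skew-symmetry.} I will transpose each summand. Diagonal matrices are symmetric and $\hat{Q}_l^T = -\hat{Q}_l$, so
\begin{equation*}
  \bigl( \diag(\amdisc{l}{m}) \, \hat{Q}_l \bigr)^T = -\hat{Q}_l \, \diag(\amdisc{l}{m}),
  \qquad
  \bigl( \hat{Q}_l \, \diag(\amdisc{l}{m}) \bigr)^T = -\diag(\amdisc{l}{m}) \, \hat{Q}_l .
\end{equation*}
The transpose of each bracket in \eqref{eq:encapsulated-Q} is therefore minus that bracket. Summing over $l$ and multiplying by $\tfrac12$ gives $Q_m^T = -Q_m$. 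This holds for arbitrary grid functions $\amdisc{l}{m}$. The same conclusion also follows from the entrywise form $(Q_m)_{ij} = \sum_l (\hat{Q}_l)_{ij} \mean{\amdisc{l}{m}}_{ij}$, since the arithmetic mean is symmetric in $i,j$ and $\hat{Q}_l$ is skew.

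\emph{Zero row sums.} I will apply $Q_m$ to $\vec{1}$ and use $\diag(\vec{a})\vec{1} = \vec{a}$ together with $\hat{Q}_l \vec{1} = \vec{0}$. This gives
\begin{equation*}
  Q_m \vec{1}
  = \frac12 \sum_l \Bigl( \diag(\amdisc{l}{m}) \hat{Q}_l \vec{1} + \hat{Q}_l \amdisc{l}{m} \Bigr)
  = \frac12 \sum_l \hat{Q}_l \amdisc{l}{m} .
\end{equation*}
This vanishes exactly by the discrete metric identity \ref{item:metric-identity}.

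No step is a real obstacle. The only point to check is that the reference operators annihilate constants. This follows because $\hat{Q}_{\mathrm{1D}} \vec{1} = \vec{0}$ and the tensor structure \eqref{eq:reference-operators} gives $\hat{Q}_l \vec{1}$ as a Kronecker product of $\hat{Q}_{\mathrm{1D}}\vec{1} = \vec{0}$ with vectors $\hat{M}_{\mathrm{1D}}\vec{1}$, hence zero.
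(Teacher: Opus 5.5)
Your proof is correct and follows the paper's route. You get skew-symmetry because each bracket $\diag(\amdisc{l}{m})\hat{Q}_l + \hat{Q}_l\diag(\amdisc{l}{m})$ is skew, and the zero row sums from $Q_m\vec{1} = \tfrac12\sum_l \hat{Q}_l\amdisc{l}{m}$, which vanishes by \ref{item:metric-identity}. Your Kronecker-product check that $\hat{Q}_l\vec{1} = \vec{0}$ is a small explicit addition the paper simply states.
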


\begin{proof}
  For any diagonal matrix $A$ and skew-symmetric $\hat{Q}$, the combination $A \hat{Q} + \hat{Q} A$ is skew-symmetric;
  summing over $l$ gives $Q_m + Q_m^T = 0$, the periodic case of \cite[Proposition~1]{alund2019encapsulated}.
  Since $\hat{Q}_l \vec{1} = \vec{0}$, the row sums are $Q_m \vec{1} = \tfrac12 \sum_l \hat{Q}_l \amdisc{l}{m}$, which vanish under \ref{item:metric-identity}.
\end{proof}

The accuracy proof rests on the following bound, which reuses the fixed-local-stencil hypotheses.

\begin{lemma}[Discrete derivatives of stencilwise-Lipschitz grid functions]
\label{lem:lipschitz-bound}
  Let $D = M^{-1} Q$ satisfy $Q \vec{1} = \vec{0}$, the locality condition $Q_{ij} \ne 0 \Rightarrow \distper(x_i, x_j) \le \omega h$, the row-sum bound $\sum_j |Q_{ij}| \le C_{\mathrm{row}} h^{d-1}$, and $m_i \ge c_M h^d$.
  Then, for every grid function $\vec{g}$ obeying the \emph{stencilwise} Lipschitz bound
  $|g_j - g_i| \le \Lambda \, \distper(x_i, x_j)$ whenever $Q_{ij} \ne 0$,
  \begin{equation}
    \abs{(D \vec{g})_i}
    \le
    \frac{1}{c_M h^d} \, C_{\mathrm{row}} h^{d-1} \, \omega h \, \Lambda
    =
    \frac{C_{\mathrm{row}} \, \omega}{c_M} \Lambda.
  \end{equation}
  The hypothesis is required only across pairs coupled by $Q$;
  in particular, for a reference operator $\hat{D}_l$, whose stencil couples only nodes on a common $\xi^l$-line, it suffices that $\vec{g}$ be the restriction of a function Lipschitz in $\xi^l$ with constant $\Lambda$, and no globally Lipschitz extension is needed.
\end{lemma}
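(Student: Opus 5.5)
The plan is to use the zero row sums to replace $(D\vec{g})_i$ by a weighted sum of differences, and then bound each difference by the stencilwise Lipschitz hypothesis.

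First, since $Q\vec{1} = \vec{0}$, we have $\sum_j Q_{ij} = 0$ for every row $i$. Because $M$ is diagonal, this gives the identity
\begin{equation*}
  (D\vec{g})_i = \frac{1}{m_i} \sum_j Q_{ij} g_j = \frac{1}{m_i} \sum_j Q_{ij} (g_j - g_i).
\end{equation*}

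Second, only indices $j$ with $Q_{ij} \ne 0$ contribute to the sum. For these, the locality condition gives $\distper(x_i, x_j) \le \omega h$. The stencilwise Lipschitz bound therefore gives $|g_j - g_i| \le \Lambda \omega h$. Taking absolute values,
\begin{equation*}
  |(D\vec{g})_i| \le \frac{1}{m_i} \Lambda \omega h \sum_j |Q_{ij}|.
\end{equation*}

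Third, we insert the row-sum bound $\sum_j |Q_{ij}| \le C_{\mathrm{row}} h^{d-1}$ and the lower mass bound $m_i \ge c_M h^d$. The powers of $h$ cancel exactly, as in Lemma~\ref{lem:bandwidth}, and we obtain the stated bound $C_{\mathrm{row}} \omega \Lambda / c_M$.

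The only point needing care is the final remark about reference operators $\hat{D}_l$. By the tensor structure \eqref{eq:reference-operators}, an entry $(\hat{Q}_l)_{ij}$ is nonzero only if the nodes $i$ and $j$ share all reference coordinates except $\xi^l$. They also satisfy the one-dimensional locality of $\hat{Q}_{\mathrm{1D}}$ along that line.

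For such a pair, $|g_j - g_i|$ is bounded by $\Lambda$ times the periodic distance in the $\xi^l$ variable, using a one-dimensional mean value estimate along the $\xi^l$-line. On the reference torus this distance equals $\distper(\xi_i, \xi_j)$, because the remaining coordinates coincide.

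Consequently, the stencilwise hypothesis holds with respect to the reference grid, with mesh size $\hat{h}$ and with the row-sum and mass constants of the tensor-product operator. These constants are products of the one-dimensional constants, since the row sums of $\hat{Q}_l$ are bounded by $C_{\mathrm{row,1D}} (C_{M,\mathrm{1D}} \hat{h})^{d-1}$. The general argument above then applies verbatim, and no global Lipschitz extension of $\vec{g}$ is needed.

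I expect no real obstacle. The main thing to check is that the distances in the hypothesis and in the locality condition are measured on the same torus, here the reference torus for $\hat{D}_l$.
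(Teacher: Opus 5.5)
Your proposal is correct and follows the paper's proof. You use $Q\vec{1}=\vec{0}$ to write $(D\vec{g})_i = m_i^{-1}\sum_j Q_{ij}(g_j - g_i)$. Locality and the stencilwise Lipschitz hypothesis then bound each difference by $\Lambda\omega h$, and the row-sum and mass bounds make the powers of $h$ cancel. Your check of the reference-operator clause is consistent with the paper and fills in what it leaves implicit. That check covers the tensor-product sparsity of $\hat{Q}_l$, measuring distances on the reference torus with $\hat{h}$, and the one-dimensional constants.
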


\begin{proof}
  By $Q \vec{1} = \vec{0}$, $(D \vec{g})_i = m_i^{-1} \sum_j Q_{ij} (g_j - g_i)$;
  the stencilwise bound and locality give $|g_j - g_i| \le \Lambda \, \omega h$ for every $j$ with $Q_{ij} \ne 0$, and the row-sum and mass bounds give the stated constant, with all powers of $h$ cancelling exactly.
\end{proof}

\begin{proposition}[Accuracy of the encapsulated operators]
\label{prop:encapsulated-accuracy}
  Let the discrete metric terms be admissible of order $p$ (Definition~\ref{def:admissible-metrics}).
  Then the operators $D_m = M^{-1} Q_m$ satisfy the accuracy estimate \eqref{eq:accuracy} of order $p$, with a constant depending on the reference operators, $J_{\min}$, the mapping $\Phi$, the norms $\|J a^l_m\|_{C^{p+1}}$, and the uniform constants implicit in the estimates \ref{item:metric-jacobian} and \ref{item:metric-accuracy} of Definition~\ref{def:admissible-metrics}.
\end{proposition}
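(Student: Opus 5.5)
We prove the accuracy estimate \eqref{eq:accuracy} for $D_m = M^{-1} Q_m$ by a nodewise error decomposition. Fix $g \in C^{p+1}$ on the physical torus and write $\vec{G}$ for its nodal values, $G_i = g(\Phi(\xi_i))$. Using the entrywise form of \eqref{eq:encapsulated-Q},
\begin{equation*}
  (Q_m \vec{G})_i
  = \frac12 \sum_l \Bigl( \amdisci{l}{m}{i} \, (\hat{Q}_l \vec{G})_i + \bigl(\hat{Q}_l (\amdisc{l}{m} \odot \vec{G})\bigr)_i \Bigr).
\end{equation*}
Dividing by $m_i = \Jdiscn_i \hat{m}_i$ gives
\begin{equation*}
  (D_m \vec{G})_i
  = \frac{1}{2 \Jdiscn_i} \sum_l \Bigl( \amdisci{l}{m}{i} \, (\hat{D}_l \vec{G})_i + \bigl(\hat{D}_l (\amdisc{l}{m} \odot \vec{G})\bigr)_i \Bigr).
\end{equation*}
We then insert the splitting $\amdisc{l}{m} = \vec{Ja}^l_m + \vec{E}^l_m$ from \ref{item:metric-accuracy}. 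This produces an \emph{analytic part}, in which $\vec{Ja}^l_m$ replaces the discrete metrics, and an \emph{error part} consisting of the terms containing $\vec{E}^l_m$.

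For the analytic part, the functions $G \circ \Phi$ and $(Ja^l_m)(G\circ\Phi)$ are $C^{p+1}$ on the reference torus, with norms controlled by $\|g\|_{C^{p+1}}$, $\Phi$, and $\|Ja^l_m\|_{C^{p+1}}$. The reference accuracy of $\hat{D}_l$ therefore gives
\begin{equation*}
  \tfrac12 \sum_l \Bigl( Ja^l_m \, \partial_{\xi^l}(g\circ\Phi) + \partial_{\xi^l}\bigl(Ja^l_m \, g\circ\Phi\bigr) \Bigr)(\xi_i) + \O(\hat h^p).
\end{equation*}
By the product rule and the continuous metric identities $\sum_l \partial_{\xi^l}(Ja^l_m) = 0$, this equals
\begin{equation*}
  \sum_l Ja^l_m \, \partial_{\xi^l}(g\circ\Phi)(\xi_i) = J(\xi_i) \, (\partial_m g)(x_i)
\end{equation*}
by the chain rule, up to $\O(\hat h^p)$. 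The prefactor $1/\Jdiscn_i$ differs from $1/J(\xi_i)$ by $\O(\hat h^p)$ by \ref{item:metric-jacobian}, since $\Jdiscn_i \ge J_{\min}/2$. The analytic contribution is bounded by $C\|g\|_{C^1}$, so this replacement costs only $\O(\hat h^p)\|g\|_{C^{p+1}}$.

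For the error part there are two terms. The first, $\vec{E}^l_m$ multiplying $\hat{D}_l \vec{G}$, is bounded by $C_E \hat h^p$ times $\|\hat{D}_l \vec{G}\|_{\ell^\infty}$. The latter is $\O(\|g\|_{C^1})$ by Lemma~\ref{lem:lipschitz-bound} applied to the reference operator, since $g\circ\Phi$ is Lipschitz in $\xi^l$ with constant $\le \|\nabla\Phi\|_\infty \|g\|_{C^1}$. The second, $\hat{D}_l(\vec{E}^l_m \odot \vec{G})$, is exactly what the second uniform bound in \eqref{eq:metric-accuracy-uniform} controls, by $C_E \hat h^p \|g\|_{C^1}$. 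Both terms are then divided by $\Jdiscn_i \ge J_{\min}/2$. Summing all contributions gives $|(D_m\vec{G})_i - \partial_m g(x_i)| \le C \hat h^p \|g\|_{C^{p+1}}$ uniformly in $i$. Finally, we identify $h$ with $\hat h$; they are comparable because $\Phi$ is a fixed diffeomorphism and the masses satisfy $\Jdiscn_i \hat m_i \sim \hat h^d$.

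The main obstacle is the error part, and specifically the second term. A naive bound $\|\hat{D}_l\| \sim \hat h^{-1}$ would lose a full order, so the argument depends on the uniform discrete-derivative bound in \ref{item:metric-accuracy}. This is why that condition is built into Definition~\ref{def:admissible-metrics}; the remaining steps are routine consequences of the reference accuracy and the continuous metric identities.
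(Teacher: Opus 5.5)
Your proposal is correct and follows the paper's proof essentially step by step. Both arguments use diagonal commutation to reduce $D_m$ to the reference operators $\hat{D}_l$, and both split $\amdisc{l}{m} = \vec{J a}^l_m + \vec{E}^l_m$. The analytic part is then handled by reference accuracy, the continuous metric identities and the chain rule, the prefactor by \ref{item:metric-jacobian}, and $\hat{D}_l(\vec{E}^l_m \odot \vec{G})$ by the second bound in \eqref{eq:metric-accuracy-uniform}. The only cosmetic difference is that you bound $\hat{D}_l \vec{G}$ via Lemma~\ref{lem:lipschitz-bound} instead of via the reference accuracy estimate, which is equally valid.
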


\begin{proof}
  Let $g$ be smooth on the physical torus and $G = g \circ \Phi$ its pullback, with $\|G\|_{C^{p+1}} \le C_\Phi \|g\|_{C^{p+1}}$ by the chain rule, where the pullback constant $C_\Phi$ depends on the derivatives of $\Phi$ up to order $p+1$.
  \emph{Step 1: mass.}
  By \ref{item:metric-jacobian}, $\Jdiscn_i = J(\xi_i) + \O(\hat{h}^p)$ and $\Jdiscn_i \ge J_{\min}/2$.
  \emph{Step 2: diagonal commutation.}
  Since $\hat{M}$, $\diag(\Jdisc)$, and the metric diagonals commute,
  \begin{equation}
    (D_m \vec{G})_i
    =
    \frac{1}{2 \, \Jdiscn_i} \sum_{l} \Bigl[ \amdisci{l}{m}{i} \, (\hat{D}_l \vec{G})_i + \bigl( \hat{D}_l ( \amdisc{l}{m} \odot \vec{G} ) \bigr)_i \Bigr] .
  \end{equation}
  \emph{Step 3: non-product terms.}
  By the reference accuracy, $(\hat{D}_l \vec{G})_i = \partial_{\xi^l} G(\xi_i) + \O(\hat{h}^p)$, and $\amdisci{l}{m}{i} = J a^l_m(\xi_i) + \O(\hat{h}^p)$ by \ref{item:metric-accuracy}.

  \emph{Step 4: product terms.}
  Split $\amdisc{l}{m} \odot \vec{G} = \vec{J a}^l_m \odot \vec{G} + \vec{E}^{l}_m \odot \vec{G}$.
  The first part is the nodal restriction of the $C^{p+1}$ function $(J a^l_m)\, g \circ \Phi$, so the reference accuracy applies;
  the second satisfies $\hat{D}_l ( \vec{E}^{l}_m \odot \vec{G} ) = \O(\hat{h}^p)$ by \ref{item:metric-accuracy}.
  Hence, $\bigl( \hat{D}_l ( \amdisc{l}{m} \odot \vec{G} ) \bigr)_i = \partial_{\xi^l} \bigl( (J a^l_m) G \bigr)(\xi_i) + \O(\hat{h}^p)$.

  \emph{Step 5: consistency.}
  Collecting Steps 1--4,
  \begin{equation}
    (D_m \vec{G})_i
    =
    \frac{1}{2 J(\xi_i)} \sum_{l} \Bigl[ (J a^l_m) \, \partial_{\xi^l} G + \partial_{\xi^l} \bigl( (J a^l_m) \, G \bigr) \Bigr](\xi_i) + \O(\hat{h}^p) .
  \end{equation}
  Expanding the second product gives $\frac{1}{2 J} \bigl[ 2 \sum_l (J a^l_m) \partial_{\xi^l} G + G \sum_l \partial_{\xi^l}(J a^l_m) \bigr]$;
  the last sum vanishes by the continuous metric identities, and the first equals $\partial_{x_m} g$ by the chain rule, both evaluated at the node.
  Tracking the remainders, the $\O(\hat{h}^p)$ constant collects the reference truncation constant, the factor $1/J_{\min}$ from Step~1, the pullback constant $C_\Phi$, and the uniform constants of \ref{item:metric-jacobian} and \ref{item:metric-accuracy};
  the example $\amdisc{l}{m} = (1 + L \hat{h}^p) \delta_{lm} \vec{1}$, $\Jdisc = \vec{1}$ for the identity map --- admissible with an \ref{item:metric-accuracy}-constant $C_E$ proportional to $L$ and $D_m = (1 + L \hat{h}^p) \hat{D}_m$ --- shows this dependence is genuine.
  Since $\Phi$ is bi-Lipschitz, $\hat{h}$ and the physical mesh size $h$ are equivalent, which completes the proof.
\end{proof}

The remaining assumptions hold for any admissible metric terms.
The masses are $m_i = \Jdiscn_i \, \hat{m}_i$ with $\hat{m}_i \sim h^d$ and $\Jdiscn_i \in [J_{\min}/2, \, 2 J_{\max}]$ for small $h$, so Assumption~\ref{asm:sbp}~\ref{item:sbp-norm} holds.
Each row of $Q_m$ combines the $d$ reference line stencils with bounded metric factors, so the number of nonzeros per row is bounded and the entries are of size $\O(h^{d-1})$;
the physical locality follows from the reference locality through the bi-Lipschitz mapping, so Assumption~\ref{asm:bandwidth} holds.
Together with Lemma~\ref{lem:encapsulated-structure}, all hypotheses of Theorem~\ref{thm:convergence} concerning the operators are verified once admissible discrete metric terms are supplied.

\subsection{Construction of admissible discrete metric terms}
\label{app:encapsulated-construction}

It remains to exhibit discrete metric terms satisfying Definition~\ref{def:admissible-metrics}.
We use the conservative curl form of Thomas and Lombard \cite{thomas1979geometric}, computed with the reference operators.
As observed by Kopriva \cite[Remark~4]{kopriva2006metric}, this is what makes the discrete metric identities hold exactly.
Analytically evaluated or naively discretized metric terms do not in general satisfy the discrete metric identities, may leave nonzero row sums, and give the discrete entropy balance a spurious production.

\emph{Differentiating the coordinates.}
The reference operators act on grid functions on the reference torus, and the accuracy estimate \eqref{eq:accuracy} that they inherit is available for \emph{periodic} arguments only.
The coordinates \eqref{eq:coordinate-lift} are not periodic, and the obstruction is confined to the diagonal direction $l = m$:
applying $\hat{D}_m$ to the sampled values $\vec{x}_m$ is not a consistent approximation of $\partial_{\xi^m} x_m$.
(For $l \ne m$ no obstruction arises, since the linear lift $\xi^m$ is constant along $\xi^l$-lines, whence $\hat{D}_l \vec{\xi^m} = \vec{0}$ and $\hat{D}_l \vec{x}_m = \hat{D}_l \vec{\tilde{x}}_m$ approximates $\partial_{\xi^l} \tilde{x}_m = \partial_{\xi^l} x_m$ to order $p$.)
Indeed, skew-symmetry and $\hat{Q}_l \vec{1} = \vec{0}$ give
\begin{equation}
\label{eq:zero-weighted-mean}
  \vec{1}^T \hat{M} \hat{D}_l \vec{v}
  =
  \vec{1}^T \hat{Q}_l \vec{v}
  =
  - (\hat{Q}_l \vec{1})^T \vec{v}
  =
  0
  \qquad \text{for every grid function } \vec{v},
\end{equation}
so every output of $\hat{D}_l$ has vanishing $\hat{M}$-weighted mean.
The sampled exact derivative $\partial_{\xi^m} x_m = 1 + \partial_{\xi^m} \tilde{x}_m$, on the other hand, has $\hat{M}$-weighted mean $\vec{1}^T \hat{M} \vec{1} + \O(\hat{h}^p)$:
the quadrature $\vec{1}^T \hat{M} \, \vec{\partial_{\xi^m} \tilde{x}_m}$ approximates $\int \partial_{\xi^m} \tilde{x}_m = 0$ only to order $p$ and need not vanish.
Since $\vec{1}^T \hat{M} \vec{1}$ is bounded away from zero, this weighted mean is positive for small $\hat{h}$, which is incompatible with the exactly vanishing mean of $\hat{D}_m \vec{x}_m$.
We therefore differentiate the linear lift in \eqref{eq:coordinate-lift} exactly and discretize only its periodic part, writing
\begin{equation}
\label{eq:lifted-derivative}
  \vec{g}^{l}_m
  \coloneqq
  \delta_{lm} \vec{1} + \hat{D}_l \vec{\tilde{x}}_m
  \qquad (\text{the discrete counterpart of } \partial_{\xi^l} x_m) .
\end{equation}
This is a periodic grid function, and since $\tilde{x}_m$ is periodic and as smooth as $\Phi$, the reference accuracy is applicable to it and yields, whenever $\Phi \in C^{p+1}$,
\begin{equation}
\label{eq:lifted-derivative-accuracy}
  \vec{g}^{l}_m
  =
  \vec{\partial_{\xi^l} x_m} + \O(\hat{h}^p) .
\end{equation}
In practice, \eqref{eq:lifted-derivative} amounts to storing the mapping through its periodic part $\tilde{x}_m$, which is what a periodic curved grid provides anyway;
on a bounded domain, where $x_m$ itself is a legitimate argument of the operator, the lift is absent and $\vec{g}^{l}_m = \hat{D}_l \vec{x}_m$.

\emph{The curl form in lifted variables.}
For $d = 3$, let $(l, r, s)$ and $(m, n, q)$ denote cyclic permutations of $(1, 2, 3)$.
The analytic contravariant metric terms are cofactors of the Jacobian and admit the curl form \cite[Theorem~1]{kopriva2006metric}
\begin{equation}
\label{eq:curl-form}
  J a^l_m
  =
  \partial_{\xi^r} \bigl( x_n \, \partial_{\xi^s} x_q \bigr)
  - \partial_{\xi^s} \bigl( x_n \, \partial_{\xi^r} x_q \bigr),
\end{equation}
the second derivatives of $x_q$ cancelling by the product rule.
The factor $x_n$ in \eqref{eq:curl-form} is again non-periodic, but its lift can be removed analytically:
inserting $x_n = \xi^n + \tilde{x}_n$, the two terms containing $\xi^n \, \partial_{\xi^r} \partial_{\xi^s} x_q$ cancel against each other, and what is left is
\begin{equation}
\label{eq:curl-form-lifted}
  J a^l_m
  =
  \delta_{rn} \, \partial_{\xi^s} x_q
  - \delta_{sn} \, \partial_{\xi^r} x_q
  + \partial_{\xi^r} \bigl( \tilde{x}_n \, \partial_{\xi^s} x_q \bigr)
  - \partial_{\xi^s} \bigl( \tilde{x}_n \, \partial_{\xi^r} x_q \bigr),
\end{equation}
where every differentiated factor is now periodic, because $\tilde{x}_n$ is periodic by \eqref{eq:coordinate-lift} and $\partial_{\xi^a} x_q = \delta_{aq} + \partial_{\xi^a} \tilde{x}_q$ is a derivative of $\Phi$.
Discretizing \eqref{eq:curl-form-lifted} with the reference operators and \eqref{eq:lifted-derivative} defines the discrete metric terms and the discrete Jacobian determinant
\begin{equation}
\label{eq:discrete-metrics}
  \amdisc{l}{m}
  =
  \delta_{rn} \, \vec{g}^{s}_q
  - \delta_{sn} \, \vec{g}^{r}_q
  + \hat{D}_r \bigl( \vec{\tilde{x}}_n \odot \vec{g}^{s}_q \bigr)
  - \hat{D}_s \bigl( \vec{\tilde{x}}_n \odot \vec{g}^{r}_q \bigr),
  \qquad
  \Jdisc
  =
  \frac{1}{d} \sum_{m=1}^{d} \sum_{l=1}^{d} \amdisc{l}{m} \odot \vec{g}^{l}_m .
\end{equation}
For $d = 2$ the cofactors are the single derivatives $J a^1_1 = \partial_{\xi^2} x_2$, $J a^2_1 = -\partial_{\xi^1} x_2$, $J a^1_2 = -\partial_{\xi^2} x_1$, $J a^2_2 = \partial_{\xi^1} x_1$, with discrete counterparts $\amdisc{1}{1} = \vec{g}^{2}_2$, $\amdisc{2}{1} = -\vec{g}^{1}_2$, $\amdisc{1}{2} = -\vec{g}^{2}_1$, $\amdisc{2}{2} = \vec{g}^{1}_1$, and $\Jdisc = \vec{g}^{1}_1 \odot \vec{g}^{2}_2 - \vec{g}^{2}_1 \odot \vec{g}^{1}_2$, which is the case $d = 2$ of the second formula in \eqref{eq:discrete-metrics};
up to the lift \eqref{eq:lifted-derivative}, this is the original construction of {\AA}lund and Nordstr{\"o}m \cite{alund2019encapsulated}.
As a consistency check, for the identity mapping $\tilde{x}_m = 0$ one gets $\vec{g}^{l}_m = \delta_{lm} \vec{1}$, hence $\amdisc{l}{m} = \delta_{lm} \vec{1}$ and $\Jdisc = \vec{1}$ exactly, in every space dimension.

\begin{lemma}[Admissibility of the curl-form metric terms]
\label{lem:encapsulated-construction}
  Let $\Phi$ be a $C^{p+d}$ diffeomorphism of the torus, with $J \ge J_{\min} > 0$.
  Then the discrete metric terms \eqref{eq:discrete-metrics} are admissible of order $p$ in the sense of Definition~\ref{def:admissible-metrics}.
\end{lemma}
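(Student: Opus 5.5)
The plan is to verify the three conditions of Definition~\ref{def:admissible-metrics} in turn. The discrete metric identities \ref{item:metric-identity} are purely algebraic and exact. The accuracy conditions \ref{item:metric-jacobian} and \ref{item:metric-accuracy} follow from the reference accuracy of the $\hat{D}_l$, applied to periodic arguments only, together with Lemma~\ref{lem:lipschitz-bound}.

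\emph{Step 1: \ref{item:metric-identity}.} Since $\hat{Q}_l = \hat{M} \hat{D}_l$, it suffices to show $\sum_l \hat{D}_l \amdisc{l}{m} = \vec{0}$. For $d = 2$ this reads $\hat{D}_1 \vec{g}^2_2 - \hat{D}_2 \vec{g}^1_2 = \hat{D}_1 \hat{D}_2 \vec{\tilde{x}}_2 - \hat{D}_2 \hat{D}_1 \vec{\tilde{x}}_2 = \vec{0}$. Here I use $\hat{D}_l \vec{1} = \vec{0}$ for the lift part and the pairwise commutation $\hat{D}_l \hat{D}_{l'} = \hat{D}_{l'} \hat{D}_l$.

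For $d = 3$ I apply $\hat{D}_l$ to \eqref{eq:discrete-metrics} and sum over the cyclic triples $(l, r, s)$.
\begin{itemize}
  \item The curl terms $\hat{D}_l \hat{D}_r(\cdot) - \hat{D}_l \hat{D}_s(\cdot)$ cancel pairwise across the three cyclic triples. This is the discrete ``div curl $= 0$'', again by commutativity.
  \item In the $\delta$ terms, $\vec{g}^s_q = \delta_{sq} \vec{1} + \hat{D}_s \vec{\tilde{x}}_q$, and the constant part is annihilated.
  \item The surviving terms are $\hat{D}_l \hat{D}_s \vec{\tilde{x}}_q$ (from the triple with $r = n$) and $-\hat{D}_{l'} \hat{D}_{r'} \vec{\tilde{x}}_q$ (from the triple with $s' = n$). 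These involve the same pair of indices $\{1,2,3\} \setminus \{n\}$, so they cancel by commutativity.
\end{itemize}

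\emph{Step 2: pointwise accuracy.} By \eqref{eq:lifted-derivative-accuracy}, $\vec{g}^l_m = \vec{\partial_{\xi^l} x_m} + \O(\hat{h}^p)$. In \eqref{eq:discrete-metrics}, the products $\vec{\tilde{x}}_n \odot \vec{g}^s_q$ split into two parts:
\begin{itemize}
  \item a sampled smooth periodic function $\tilde{x}_n \partial_{\xi^s} x_q$, to which the reference accuracy applies;
  \item an error grid function $\vec{\tilde{x}}_n \odot \vec{e}^s_q$, where $\vec{e}^s_q = \hat{D}_s \vec{\tilde{x}}_q - \vec{\partial_{\xi^s} \tilde{x}_q}$ and $r \ne s$.
\end{itemize}
To bound $\hat{D}_r$ of the error part, I use Lemma~\ref{lem:lipschitz-bound}. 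The key observation is the tensor-product structure: along each $\xi^s$-line, $\vec{e}^s_q$ is the one-dimensional truncation error of $\tilde{x}_q(\xi^r, \cdot)$. As a function of the transverse variable $\xi^r$ it is therefore smooth, with $\xi^r$-derivative equal to the truncation error of $\partial_{\xi^r} \tilde{x}_q$. That derivative is $\O(\hat{h}^p)$ by \eqref{eq:accuracy} applied to $\partial_{\xi^r} \tilde{x}_q \in C^{p+1}$.

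Hence $\vec{e}^s_q$, and likewise $\vec{\tilde{x}}_n \odot \vec{e}^s_q$, is the restriction of a function Lipschitz in $\xi^r$ with constant $\O(\hat{h}^p)$. Lemma~\ref{lem:lipschitz-bound} then gives $\hat{D}_r(\vec{\tilde{x}}_n \odot \vec{e}^s_q) = \O(\hat{h}^p)$. This yields $\|\vec{E}^l_m\|_{\ell^\infty} \le C_E \hat{h}^p$, and this step consumes one derivative beyond $C^{p+1}$.

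\emph{Step 3: the derivative bound in \eqref{eq:metric-accuracy-uniform}, the main obstacle.} I expect this to be the hardest step. I will show that each $\vec{E}^l_m$ is itself the restriction of a function that is Lipschitz in $\xi^l$ with constant $\O(\hat{h}^p)$; for $d = 3$ this function is built from nested truncation errors. Then, for $G = g \circ \Phi$,
\begin{equation*}
  \bigl| (E G)_j - (E G)_i \bigr| \le \bigl( \operatorname{Lip}_{\xi^l}(E) \, \|G\|_{\infty} + \|E\|_{\infty} \operatorname{Lip}(G) \bigr) \distper(\xi_i, \xi_j) \le C \hat{h}^p \|g\|_{C^1} \distper(\xi_i, \xi_j),
\end{equation*}
and Lemma~\ref{lem:lipschitz-bound} with $\Lambda = C \hat{h}^p \|g\|_{C^1}$ gives the bound with $C_E$ independent of $g$.

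The Lipschitz property of $\vec{E}^l_m$ again uses the transverse-smoothness trick of Step~2, iterated once more for the outer $\hat{D}_r$ in $d = 3$. One transverse derivative is needed per nesting level, which is where the hypothesis $\Phi \in C^{p+d}$ is spent. The hypothesis $J a^l_m \in C^{p+1}$ in Definition~\ref{def:admissible-metrics} also holds, since the $J a^l_m$ are polynomials in first derivatives of $\Phi \in C^{p+d}$.

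\emph{Step 4: \ref{item:metric-jacobian}.} The cofactor identity $\sum_m J a^l_m \, \partial_{\xi^{l'}} x_m = J \delta_{l l'}$ gives $J = \frac{1}{d} \sum_{l,m} J a^l_m \, \partial_{\xi^l} x_m$. Inserting the $\O(\hat{h}^p)$-accurate factors from Steps~2 and \eqref{eq:lifted-derivative-accuracy}, all uniformly bounded, yields $\Jdiscn_i = J(\xi_i) + \O(\hat{h}^p)$ uniformly in $i$. Consequently $\Jdiscn_i \ge J_{\min}/2$ for $\hat{h}$ small enough.
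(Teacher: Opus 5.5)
Your proposal is correct and follows essentially the same route as the paper: exact commutator cancellation for \ref{item:metric-identity} (both the product terms and the two surviving $\delta$-terms), a split of $\vec{g}^{s}_q$ for \ref{item:metric-accuracy} into a smooth periodic part handled by the reference accuracy and a truncation-error part that is Lipschitz in the transverse variable and is bounded by Lemma~\ref{lem:lipschitz-bound}, the same extension applied once more to $\vec{E}^{l}_m$ to get the derivative bound, and the trace/cofactor formula for \ref{item:metric-jacobian}. The one point the paper spells out that you leave implicit is that the linewise extension must be single-valued at duplicated nodes coupled by the outer stencil; your tensor-product construction already guarantees this, since the extended value depends only on the transverse coordinate.
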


\begin{proof}
  \emph{Identities \ref{item:metric-identity}.}
  With $\hat{Q}_l = \hat{M} \hat{D}_l$, we have $\sum_l \hat{Q}_l \amdisc{l}{m} = \hat{M} \sum_l \hat{D}_l \amdisc{l}{m}$, so it suffices to show $\sum_l \hat{D}_l \amdisc{l}{m} = \vec{0}$.
  Since $\hat{D}_l \vec{1} = \vec{0}$, the lifted derivatives \eqref{eq:lifted-derivative} satisfy
  \begin{equation}
  \label{eq:lifted-derivative-commutator}
    \hat{D}_l \vec{g}^{a}_q = \hat{D}_l \hat{D}_a \vec{\tilde{x}}_q,
  \end{equation}
  i.e., under a further reference derivative the lift drops out.
  Let $d = 3$ and split $\amdisc{l}{m}$ from \eqref{eq:discrete-metrics} into its two $\delta$-terms and its two product terms.
  For the product terms, abbreviate $\vec{b}_a = \vec{\tilde{x}}_n \odot \vec{g}^{a}_q$;
  their contribution is
  \begin{equation}
    \sum_l \hat{D}_l \bigl( \hat{D}_r \vec{b}_s - \hat{D}_s \vec{b}_r \bigr)
    =
    (\hat{D}_1 \hat{D}_2 - \hat{D}_2 \hat{D}_1) \vec{b}_3
    + (\hat{D}_2 \hat{D}_3 - \hat{D}_3 \hat{D}_2) \vec{b}_1
    + (\hat{D}_3 \hat{D}_1 - \hat{D}_1 \hat{D}_3) \vec{b}_2
    =
    \vec{0},
  \end{equation}
  since the reference derivative matrices commute;
  this is the discrete metric identity of the conservative curl form \cite[Theorems~1 and 5]{kopriva2006metric}.
  For the $\delta$-terms, only two summands survive:
  $\delta_{rn} = 1$ forces $l = n - 1$ and then $s = n + 1$, while $\delta_{sn} = 1$ forces $l = n + 1$ and then $r = n - 1$ (indices modulo $3$).
  With \eqref{eq:lifted-derivative-commutator} their contribution is therefore
  \begin{equation}
    \hat{D}_{n-1} \vec{g}^{n+1}_q - \hat{D}_{n+1} \vec{g}^{n-1}_q
    =
    (\hat{D}_{n-1} \hat{D}_{n+1} - \hat{D}_{n+1} \hat{D}_{n-1}) \vec{\tilde{x}}_q
    =
    \vec{0} .
  \end{equation}
  For $d = 2$, \eqref{eq:lifted-derivative-commutator} reduces both identities to the single commutator $\sum_l \hat{D}_l \amdisc{l}{m} = \pm (\hat{D}_1 \hat{D}_2 - \hat{D}_2 \hat{D}_1) \vec{\tilde{x}}_{3-m} = \vec{0}$, cf.\ \cite[Theorem~3]{kopriva2006metric}.

  \emph{Accuracy \ref{item:metric-accuracy}.}
  After removal of the coordinate lifts, the exact continuous factors in \eqref{eq:discrete-metrics} are periodic, so the reference accuracy applies to them directly.
  The products that additionally contain a discrete derivative $\vec{g}^{s}_q$ are not restrictions of single smooth periodic functions --- at duplicated nodes two copies of a coordinate may carry slightly different discrete-derivative values --- and are instead decomposed below into a smooth periodic part, treated by the reference accuracy, and a truncation-error part, controlled by Lemma~\ref{lem:lipschitz-bound}.
  We apply Lemma~\ref{lem:lipschitz-bound} repeatedly with outer operator $\hat{D}_a$;
  its stencilwise hypothesis is met without a global extension, because every remainder to which it is applied below is built from reference operators $\hat{D}_b$ with $b \ne a$, which act as the identity in the $a$-th tensor factor.
  Duplicated nodes sharing a $\xi^a$-coordinate and coupled by the $\hat{D}_a$-stencil --- as may occur for assembled operators with duplicated interface nodes --- therefore receive identical values, so each linewise extension below is single-valued on its $\xi^a$-line, as Lemma~\ref{lem:lipschitz-bound} requires.
  The $\delta$-terms of \eqref{eq:discrete-metrics} approximate those of \eqref{eq:curl-form-lifted} to order $p$ by \eqref{eq:lifted-derivative-accuracy}.
  For a product term, split $\vec{g}^{s}_q = \vec{\partial_{\xi^s} x_q} + \vec{\varepsilon}_s$ with $\vec{\varepsilon}_s = \O(\hat{h}^p)$, again by \eqref{eq:lifted-derivative-accuracy}.
  Then $\hat{D}_r ( \vec{\tilde{x}}_n \odot \vec{\partial_{\xi^s} x_q} )$ applies the reference operator to the restriction of the \emph{periodic} $C^{p+1}$ function $\tilde{x}_n \, \partial_{\xi^s} x_q$ and is accurate of order $p$, while $\hat{D}_r ( \vec{\tilde{x}}_n \odot \vec{\varepsilon}_s ) = \O(\hat{h}^p)$ by Lemma~\ref{lem:lipschitz-bound}:
  Extend $\vec{\varepsilon}_s$ in $\xi^r$ by applying the same $\xi^s$-line stencil to the smooth periodic part of the mapping at continuous $\xi^r$.
  Its $\xi^r$-derivative is the $\xi^s$-truncation error of $\partial_{\xi^r} \tilde{x}_q \in C^{p+1}$, so $\vec{\tilde{x}}_n \odot \vec{\varepsilon}_s$ is the restriction of a function that is Lipschitz in $\xi^r$ with constant $\O(\hat{h}^p)$.
  Hence $\amdisc{l}{m} = \vec{J a}^l_m + \vec{E}^{l}_m$ with $\|\vec{E}^{l}_m\|_{\ell^\infty} = \O(\hat{h}^p)$, the constant depending only on the reference operator and on bounded derivatives of $\Phi$, not on the node index.
  Applying the same extension to $\vec{E}^{l}_m$ --- whose $\xi^l$-derivative is, by the product rule, a sum of truncation errors of the type above applied to the first derivatives of $\tilde{x}$, and which is built from the transverse operators $\hat{D}_r, \hat{D}_s$ with $r, s \ne l$ --- shows that $\vec{E}^{l}_m$ is the restriction of a function Lipschitz in $\xi^l$ with constant $\O(\hat{h}^p)$.
  For $G = g \circ \Phi$ with $g \in C^1$, the product $\vec{E}^{l}_m \odot \vec{G}$ is then the restriction of a function Lipschitz in $\xi^l$ with constant $\O(\hat{h}^p) \|G\|_{C^1} \le C_E \hat{h}^p \|g\|_{C^1}$, using $\|G\|_{C^1} \le C_\Phi \|g\|_{C^1}$;
  by the single-valuedness noted above, Lemma~\ref{lem:lipschitz-bound} applies and yields the uniform estimate $\|\hat{D}_l ( \vec{E}^{l}_m \odot \vec{G} )\|_{\ell^\infty} \le C_E \hat{h}^p \|g\|_{C^1}$ of \eqref{eq:metric-accuracy-uniform}, with $C_E$ independent of $g$, $\hat{h}$, and the node index.
  This is where $\Phi \in C^{p+2}$ for $d = 2$, resp.\ $C^{p+3}$ for $d = 3$, enters:
  the metric terms carry $d - 1$ derivatives of $\Phi$, and the extension costs one more.

  \emph{Jacobian \ref{item:metric-jacobian}.}
  By \ref{item:metric-accuracy} and \eqref{eq:lifted-derivative-accuracy} each factor in $\Jdisc$ is $\O(\hat{h}^p)$-accurate, and the analytic counterpart $\frac{1}{d} \sum_m \sum_l (J a^l_m) \, \partial_{\xi^l} x_m = \frac{1}{d} \sum_m J = J$ by the chain rule, so $\Jdiscn_i = J(\xi_i) + \O(\hat{h}^p)$ and $\Jdiscn_i \ge J_{\min}/2$ for small $\hat{h}$.
\end{proof}

\begin{remark}[Other curl forms]
\label{rem:invariant-curl}
  The conservative curl form is not unique:
  interchanging $x_n$ and $x_q$ in \eqref{eq:curl-form} and changing the sign gives the second variant of \cite[Section~4]{kopriva2006metric}, and their average is the coordinate-invariant curl form of \cite[Equation~(37)]{kopriva2006metric}, often preferred in practice.
  Both alternatives may replace \eqref{eq:discrete-metrics}:
  removing the lift of the undifferentiated coordinate as in \eqref{eq:curl-form-lifted} works verbatim --- the terms carrying $\xi$ times a second derivative again cancel in pairs --- and afterwards each variant is, as before, a sum of $\delta$-terms of the form \eqref{eq:lifted-derivative} and of differences of reference derivatives of products of periodic grid functions.
  Hence the same commutators cancel and the accuracy argument is unchanged.
\end{remark}

\begin{remark}[Metric-averaged curved flux differencing]
\label{rem:metric-averaged}
  The encapsulated operators \eqref{eq:encapsulated-Q} are exactly the assembled operators of the metric-averaged contravariant flux differencing schemes used in practice on curved meshes \cite{gassner2016split,crean2018entropy,ranocha2023efficient}.
  Such a scheme reads
  \begin{equation}
    m_i \, \frac{\dif}{\dif t} \unum_i + \sum_{l} \sum_j 2 (\hat{Q}_l)_{ij} \sum_{m} \mean{\amdisc{l}{m}}_{ij} \, \fnum_m(\unum_i, \unum_j) = 0,
  \end{equation}
  with the metric averages $\mean{\amdisc{l}{m}}_{ij}$;
  its assembled directional operators are $(Q_m)_{ij} = \sum_l (\hat{Q}_l)_{ij} \mean{\amdisc{l}{m}}_{ij}$, i.e., \eqref{eq:encapsulated-Q}, which casts the scheme exactly into the form \eqref{eq:semidiscretization}.
  By Lemma~\ref{lem:encapsulated-structure} and Proposition~\ref{prop:encapsulated-accuracy}, the scheme fits Theorem~\ref{thm:convergence} whenever its discrete metric terms are admissible (Definition~\ref{def:admissible-metrics}), and Lemma~\ref{lem:encapsulated-construction} exhibits one admissible choice.
  For a given implementation, the identities \ref{item:metric-identity} coincide with the discrete metric identities already needed for entropy conservation (Remark~\ref{rem:gcl}), so what remains to be checked is the accuracy of its particular metric and Jacobian terms --- conditions \ref{item:metric-accuracy} and \ref{item:metric-jacobian}.
  Condition \ref{item:metric-jacobian} must be verified separately unless, as for the construction \eqref{eq:discrete-metrics} (where Lemma~\ref{lem:encapsulated-construction} deduces it from \ref{item:metric-accuracy} through the trace formula defining $\Jdisc$), the implementation's definition of $\Jdisc$ makes it follow from \ref{item:metric-accuracy}.
\end{remark}

\section*{Tool disclosure}

Claude Opus~4.8 and Codex GPT-5.5 were used to assist in the preparation of this work.

\section*{Acknowledgments}

HR was supported by the Deutsche Forschungsgemeinschaft
(DFG, German Research Foundation, project number 528753982
as well as within the DFG priority program SPP~2410 with project number 526031774).

\printbibliography

\end{document}